\numberwithin{equation}{section}
\definecolor{MyBlue}{cmyk}{1,0.13,0,0.63}
\definecolor{MyGreen}{cmyk}{0.91,0,0.88,0.52}
\newcommand{\mylinkcolor}{MyBlue}
\newcommand{\mycitecolor}{MyGreen}
\newcommand{\myurlcolor}{webbrown}
\def\@endtheorem{\endtrivlist}% NEW
\theoremstyle{plain}
\newtheorem{thm}{Theorem}[section]
\newtheorem{main}{Theorem}
\newtheorem*{main*}{Main Theorem}
\newtheorem{lem}[thm]{Lemma}
\newtheorem{prop}[thm]{Proposition}
\newtheorem{coro}[thm]{Corollary}
\newtheorem*{question}{Question}
\theoremstyle{definition}
\newtheorem{defn}[thm]{Definition}
\newtheorem{remark}[thm]{Remark}
\newtheoremstyle{note}
{3pt}% Space above
{3pt}% Space below
{\bfseries}% Body font
{\parindent}% Indent amount
{\bfseries\itshape}% Theorem head font
{:}% Punctuation after theorem head
{.5em}% Space after theorem head
{}% Theorem head spec (can be left empty, meaning `normal')
\theoremstyle{note}
\newtheorem*{Note}{Note}
\renewcommand{\eqref}[1]{\labelcref{#1}}
\crefname{thm}{Theorem}{Theorems}
\crefname{lem}{Lemma}{Lemmas}
\crefname{prop}{Proposition}{Propositions}
\crefname{coro}{Corollary}{Corollaries}
\crefname{defn}{Definition}{Definitions}
\crefname{example}{Example}{Examples}
\crefname{remark}{Remark}{Remarks}
\def\thm@space@setup{%
  \thm@preskip=4pt plus 2pt minus 2pt
  \thm@postskip=\thm@preskip
}
\renewenvironment{proof}[1][\proofname]{\par
  \pushQED{\qed}%
%  \normalfont \topsep6\p@\@plus6\p@\relax % OLD
  \normalfont \topsep4\p@\relax % NEW
  \trivlist
  \item[\hskip\labelsep
        \itshape
    #1\@addpunct{.}]\ignorespaces
}{%
  \popQED\endtrivlist\@endpefalse
}
\setlist{topsep=4pt plus 2pt minus 2pt,partopsep=0pt,itemsep=2pt plus 2pt minus 2pt,parsep=0.5\parskip}
\newcommand{\MR}[1]{}
\let\OLDthebibliography\thebibliography
\renewcommand\thebibliography[1]{
  \addcontentsline{toc}{section}{\refname}
  \OLDthebibliography{GMR19}
  \setlength{\parskip}{0pt}
  \setlength{\itemsep}{0pt plus 0.3ex}
}
\newcommand{\N}{\mathbb{N}}
\newcommand{\R}{\mathbb{R}}
\newcommand{\C}{\mathbb{C}}
\newcommand{\Z}{\mathbb{Z}}
\newcommand{\A}{\mathcal{A}}
\newcommand{\D}{\mathcal{D}}
\newcommand{\E}{\mathcal{E}}
\newcommand{\mS}{\mathcal{S}}
\DeclareMathOperator{\Dom}{Dom}
\DeclareMathOperator{\Ran}{Ran}
\DeclareMathOperator{\Ker}{Ker}
\DeclareMathOperator{\Id}{Id}
\DeclareMathOperator{\id}{id}
\DeclareMathOperator{\End}{End}
\DeclareMathOperator{\Hom}{Hom}
\DeclareMathOperator{\ev}{ev}
\DeclareMathOperator{\Lip}{Lip}
\DeclareMathOperator{\sgn}{sgn}
\DeclareMathOperator{\spec}{spec}
\newcommand{\K}{K}
\newcommand{\KK}{K\!K}
\newcommand{\UKK}{U\!K\!K}
\newcommand{\op}{\textnormal{op}}
\newcommand{\til}[1]{\widetilde{#1}}
\newcommand{\hotimes}{\mathbin{\hat\otimes}}
\newcommand{\hot}{\hotimes}
\newcommand{\la}{\langle}
\newcommand{\ra}{\rangle}
\newcommand{\into}{\hookrightarrow}
\newcommand{\mvert}{\,|\,}
\newcommand{\Bigmvert}{\,\Big|\,}
\renewcommand{\bar}[1]{\overline{#1}}
\newcommand{\order}{\mathcal{O}}
\newcommand{\sgnmod}{\textnormal{sgnmod}}
\newcommand{\sgnlog}{\textnormal{sgnlog}}
\renewcommand{\hat}{\widehat}
\newcommand{\CCliff}{{\mathbb{C}\mathrm{l}}}
\newcommand{\mattwo}[4]{
  \left(\!\!\!\begin{array}{c@{~}c}#1&#2\\ #3&#4\\\end{array}\!\!\!\right)
}
\title{Homotopy equivalence in unbounded \texorpdfstring{$\KK$}{KK}-theory}
\author{
Koen van den Dungen$^{1}$ and Bram Mesland$^{2}$
\\[2mm]
{\small ${}^1$Mathematisches Institut}, 
{\small Universit\"at Bonn}\\
{\small Endenicher Allee 60, D-53115 Bonn, Germany}\\
{\small \texttt{kdungen@uni-bonn.de}}\\[2mm]
{\small ${}^2$IMAPP, Radboud University Nijmegen}\\
{\small Heyendaalseweg 135, 6525AH Nijmegen, the Netherlands}\\
{\small \texttt{b.mesland@math.ru.nl}}
}
\begin{document}

\maketitle
\begin{abstract}
\noindent
We propose a new notion of unbounded $\KK$-cycle, mildly generalising unbounded Kasparov modules, for which the direct sum is well-defined. 
To a pair $(A,B)$ of $\sigma$-unital $C^{*}$-algebras, we can then associate a semigroup $\overline{\UKK}(A,B)$ of homotopy equivalence classes of unbounded cycles, and we prove that this semigroup is in fact an abelian group. 
In case $A$ is separable, our group $\overline{\UKK}(A,B)$ is isomorphic to Kasparov's $\KK$-theory group $\KK(A,B)$ via the bounded transform. 
We also discuss various notions of degenerate cycles, and we prove that the homotopy relation on unbounded cycles coincides with the relation generated by operator-homotopies and addition of degenerate cycles.

\vspace{\baselineskip}
\noindent
\emph{Mathematics Subject Classification 2010}: 
19K35. % Kasparov theory ($KK$-theory)
\end{abstract}

\phantomsection
\pdfbookmark[1]{Introduction}{Introduction}
\section*{Introduction}

Given two ($\sigma$-unital, $\Z_2$-graded) $C^*$-algebras $A$ and $B$, Kasparov \cite{Kas80} defined the abelian group $\KK(A,B)$ as a set of homotopy equivalence classes of Kasparov $A$-$B$-modules, equipped with the direct sum. 
These groups simultaneously generalise $\K$-theory (if $A=\C$) and $\K$-homology (if $B=\C$). 

It was shown by Baaj-Julg that every class in $\KK(A,B)$ can also be represented by an \emph{unbounded} Kasparov module. 
Many examples of elements in $\KK$-theory which arise from geometric situations are most naturally described in the unbounded picture. The prototypical example is a first-order elliptic differential operator (e.g.\ the Dirac operator, signature operator, or de Rham operator) on a complete Riemannian manifold. 
The unbounded picture is also more suitable in the context of non-smooth manifolds. Indeed, while on Lipschitz manifolds there is no pseudodifferential calculus, it makes perfect sense to consider first-order differential operators and thus to construct unbounded Kasparov modules on Lipschitz manifolds (see e.g.\ \cite{Tel83,Hil85,Hil89}). 
Furthermore, the Kasparov product is often easier to describe in the unbounded picture. In fact, under suitable assumptions, the Kasparov product of two unbounded Kasparov modules can be explicitly \emph{constructed} \cite{Mes14,KL13,BMS16, MR16}.
These advantages of the unbounded picture of $\KK$-theory motivate the following question:

\begin{question} 
Can Kasparov's $\KK$-groups equivalently be defined as the set of homotopy equivalence classes of \emph{unbounded} Kasparov modules?
\end{question}

A similar question is considered in \cite{Kaa19pre}, where it is shown that Kasparov's $\KK$-groups can be obtained using the (a priori) weaker equivalence relation of \emph{stable homotopy} of unbounded Kasparov modules. 
In the present paper we will provide a positive answer to the above Question. 
Moreover, we will prove that the stable homotopy relation of \cite{Kaa19pre} in fact coincides with ordinary homotopy equivalence.

The first problem one encounters when trying to answer the above Question, is that the direct sum of unbounded Kasparov modules is not well-defined. To resolve this issue, we slightly weaken the standard definition of unbounded Kasparov modules  in such a way that the set $\overline{\Psi}_1(A,B)$ of such \emph{unbounded $A$-$B$-cycles} $(E,\D)$ becomes closed under the direct sum operation. 
By considering the natural notion of homotopy equivalence on $\overline{\Psi}_1(A,B)$ (completely analogous to homotopies of bounded Kasparov modules), we thus obtain a semigroup $\overline{\UKK}(A,B)$ given by the set of homotopy equivalence classes of $\overline{\Psi}_1(A,B)$.
We will prove that $\overline{\UKK}(A,B)$ is in fact a group. 

To answer the aforementioned Question, we need to show that the group $\overline{\UKK}(A,B)$ is isomorphic to Kasparov's $\KK$-theory group $\KK(A,B)$. 
The results of Baaj-Julg already show that the \emph{bounded transform}
\[(E,\D)\mapsto (E, F_{\D}:=\D(1+\D^{2})^{-\frac{1}{2}}),\]
induces a surjective homomorphism $\overline{\UKK}(A,B) \to \KK(A,B)$. This is proven by explicitly constructing an unbounded lift for any bounded Kasparov module. 

The difficulty is to prove injectivity of the bounded transform.
To be precise, given unbounded cycles $(E_0,\D_0)$ and $(E_1,\D_1)$ and a homotopy $(E,F)$ between their bounded transforms, we can use the lifting results from Baaj-Julg to lift $(E,F)$ to an unbounded homotopy $(E,\mS)$. However, it is in general not clear how the endpoints of $(E,\mS)$ are related to $(E_j,\D_j)$, and the main challenge is therefore to construct $(E,\mS)$ in such a way that its endpoints are in fact homotopic to $(E_j,\D_j)$. 

For this purpose, we describe a general notion of \emph{functional dampening}, which is the transformation $\D \mapsto f(\D)$ for suitable `dampening functions' $f\colon\R\to\R$ which blow up towards infinity at a slow enough rate (such that $f(x)(1+x^2)^{-\frac12}$ vanishes at infinity) and which are compatible with the Lipschitz structure obtained from $\D$. 
We prove that $(E,f(\D))$ is operator-homotopic to $(E,\D)$ for any dampening function $f$, generalising a result in \cite{Kaa19pre}.

By a careful adaptation of the lifting construction of \cite{BJ83} and \cite{Kuc00}, using ideas from \cite{MR16}, we then prove our first main result: 
\begin{main}
\label{main:lift}
If $A$ is separable, then any homotopy $(E,F)$ between $(E_0,F_{\D_0})$ and $(E_1,F_{\D_1})$ can be lifted to an unbounded Kasparov $A$-$C([0,1],B)$-module $(E,\mS)$ such that, for $j=0,1$, the endpoints $\ev_j(E,\mS)$ are unitarily equivalent to $(E_j,f_j(\D_j))$ for dampening functions $f_j\colon\R\to\R$. 
\end{main}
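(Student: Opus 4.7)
The plan is to adapt the Baaj-Julg lifting procedure \cite{BJ83}, as refined by Kucerovsky \cite{Kuc00} and by the ideas of \cite{MR16}, to the homotopy $(E, F)$ viewed as a bounded Kasparov $A$-$C([0,1], B)$-module, while retaining enough control over the construction that the endpoint restrictions $\ev_j(\mS)$ agree with the originally prescribed unbounded operators $\D_j$ up to a dampening. The separability of $A$ enters at the outset: one fixes a countable dense subset $\{a_n\}\subset A$ and a sequence of positive weights $c_n$ decreasing rapidly enough that the defining series for $\mS$ converges strongly, uniformly in $t\in[0,1]$, and so that every $a_n$ ends up in the resulting Lipschitz algebra of $\mS$.

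Concretely, I would build $\mS$ in two pieces. The first piece is a functional calculus $g(F)$, where $g\colon(-1,1)\to\R$ is a smooth odd strictly increasing function that blows up towards $\pm 1$ at a carefully controlled rate. Since $F_{\D_j} = \D_j(1+\D_j^2)^{-1/2}$, the operator $g(F_{\D_j})$ is automatically of the form $g_j(\D_j)$ for a function $g_j\colon\R\to\R$; under the change of variables $x\mapsto x(1+x^2)^{-1/2}$, the rate at which $g$ blows up near $\pm 1$ corresponds exactly to the growth of $g_j$ at infinity, and this is the origin of the dampening. The second piece is a bounded self-adjoint correction, built as an absolutely convergent series in the $a_n$ and in commutators $[g(F), a_n]$, which is required so that $[\mS, a_n]$ is bounded for every $n$. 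At each endpoint $t=j$ this correction restricts to a bounded self-adjoint perturbation of $g_j(\D_j)$ on $E_j$; a standard comparison argument, as in \cite{Kuc00, MR16}, then absorbs the perturbation into the functional calculus to yield an unbounded cycle unitarily equivalent to $f_j(\D_j)$ for a modified function $f_j$.

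The central technical obstacle is to simultaneously arrange two things: (i) that $(E, \mS)$ is a genuine unbounded Kasparov module over $C([0,1], B)$, which requires self-adjointness and regularity of $\mS$, local compactness of $(1+\mS^2)^{-1}$, and boundedness of $[\mS, a]$ for $a$ in a dense subalgebra of $A$, uniformly in $t\in[0,1]$; and (ii) that the two functions $f_j$ emerging from the construction genuinely qualify as dampening functions, satisfying both the growth condition $f_j(x)(1+x^2)^{-1/2}\to 0$ at infinity and the Lipschitz compatibility with $\D_j$ needed for $(E_j, f_j(\D_j))$ to be an unbounded cycle. Requirement (i) should follow from carefully adapted Baaj-Julg/Kucerovsky estimates, exploiting the norm-continuity of $t\mapsto F_t$ to make all series converge uniformly in $t$. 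Requirement (ii) is the more delicate point: one must match the rate of blow-up of $g$ near $\pm 1$ to the prescribed dampening class, which is where the earlier results on functional dampening feed back into the construction. Once both are in place, the identification of the bounded transform of $\mS$ with $F$ (up to an operator-homotopy, which suffices by the operator-homotopy dampening result) is routine, and the theorem follows.
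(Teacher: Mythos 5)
Your additive ansatz $\mS = g(F) + T$, with $g$ a functional calculus of $F$ and $T$ a bounded self-adjoint correction, is structurally different from the paper's construction $\mS = \tfrac12(F l^{-1} + l^{-1}F)$, and this difference creates two genuine gaps.

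First, a bounded correction cannot repair unbounded commutators. For bounded $T$ and $a\in A$ one has $[g(F)+T,a]=[g(F),a]+[T,a]$ with $[T,a]$ automatically bounded, so $[g(F)+T,a]$ is bounded if and only if $[g(F),a]$ already was. Since $g$ blows up near $\pm1$, the commutator $[g(F),a]$ is generically unbounded for $t$ in the interior of $[0,1]$, and no choice of bounded $T$ can change this. The paper circumvents the problem by building $l^{-1}=\sum_n\varepsilon^{-n}d_n$ from differences $d_n=u_{n+1}-u_n$ of a \emph{quasicentral} approximate unit for $J_F$ inside $C^*(h)$; the estimates $\|[d_n,a_i]\|\le\varepsilon^{2n}$ are precisely what make $[l^{-1},a_i]$, and hence $[\mS,a_i]$, bounded. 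This quasicentral step is where separability of $A$ is actually used, and it cannot be replaced by a bounded perturbation of $g(F)$.

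Second, the endpoint identification does not follow. You claim that the restriction of $T$ to $E_j$ gives a bounded perturbation of $g_j(\D_j)$ which can be ``absorbed into the functional calculus'' to produce $f_j(\D_j)$; but a bounded perturbation of $g_j(\D_j)$ is not in general a function of $\D_j$, so there is no such $f_j$ and this step fails. The paper obtains the exact identity $\mS_j = f_j(\D_j)$ (not merely something homotopic to it) by taking the strictly positive generator to be $h=\chi k+(1-F^2)$ with $\chi(t)=t(1-t)$ and $k$ strictly positive compact. Then $\ev_j(h)=(1-F_{\D_j}^2)=(1+\D_j^2)^{-1}$ because $\chi(j)=0$, so $l_j=\ev_j(l)\in C^*\big((1+\D_j^2)^{-1}\big)$ commutes with $F_{\D_j}$ and $\mS_j=F_{\D_j}l_j^{-1}$ is literally a function of $\D_j$. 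The $\chi k$ summand is also what guarantees $h$ has dense range on the interior $(0,1)$, where $1-F^2$ alone may fail to; your plan would need to address this as well. Both gaps trace back to the same source: the correction must live inside a commutative $C^*$-algebra that degenerates to the functional calculus of $\D_j$ at the endpoints, and the paper achieves this multiplicatively via $l$, not additively via a bounded $T$.
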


As mentioned above, functional dampening provides an operator-homotopy between $(E_j,\D_j)$ and $(E_j,f_j(\D_j))$, and thus we obtain a \emph{positive answer} to the above Question:

\begin{main}
\label{main:isomorphism}
If $A$ is separable, then the bounded transform induces an isomorphism 
\[
\overline{\UKK}(A,B) \xrightarrow{\simeq} \KK(A,B) . 
\]
\end{main}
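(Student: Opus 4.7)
The plan is to verify that the bounded transform descends to a well-defined semigroup homomorphism $\overline{\UKK}(A,B) \to \KK(A,B)$, establish surjectivity via the Baaj--Julg lifting theorem, and then deduce injectivity from \cref{main:lift} combined with functional dampening.

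First I would check that $(E,\D)\mapsto (E,F_{\D})$ sends unbounded $A$--$B$-cycles to bounded Kasparov $A$--$B$-modules and respects direct sums. Well-definedness on homotopy classes is immediate: an unbounded $A$--$C([0,1],B)$-cycle $(E,\D)$ has bounded transform $(E,F_{\D})$ which is a bounded Kasparov $A$--$C([0,1],B)$-module, and since evaluation at the endpoints commutes with the bounded transform, homotopies of unbounded cycles descend to homotopies of their bounded transforms. Surjectivity is then essentially the content of Baaj--Julg: every bounded Kasparov module admits an unbounded representative $(E,\D)$ whose bounded transform recovers the original class in $\KK(A,B)$.

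The heart of the theorem is injectivity. Suppose $(E_0,\D_0)$ and $(E_1,\D_1)$ are unbounded cycles whose bounded transforms are homotopic via some bounded Kasparov $A$--$C([0,1],B)$-module $(E,F)$. By \cref{main:lift}, the homotopy $(E,F)$ lifts to an unbounded cycle $(E,\mS) \in \overline{\Psi}_1(A,C([0,1],B))$ whose endpoints are unitarily equivalent to $(E_j,f_j(\D_j))$ for suitable dampening functions $f_j$. The functional dampening result provides operator-homotopies $(E_j,\D_j) \sim (E_j,f_j(\D_j))$, and $(E,\mS)$ itself is by construction an unbounded homotopy between $(E_0, f_0(\D_0))$ and $(E_1,f_1(\D_1))$. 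Concatenating these three homotopies inside $\overline{\Psi}_1(A,B)$ yields $[E_0,\D_0] = [E_1,\D_1]$ in $\overline{\UKK}(A,B)$.

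The hard work has been absorbed into \cref{main:lift}; once that lifting result and the functional dampening operator-homotopy are in hand, the isomorphism follows with little additional effort. The only point requiring minor care is verifying that unitary equivalence of unbounded cycles implies equality in $\overline{\UKK}(A,B)$ (via a standard cylinder construction), so that the chain of homotopies above genuinely witnesses equality of classes in $\overline{\UKK}(A,B)$.
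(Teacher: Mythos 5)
Your proposal follows the paper's own argument for \cref{thm:bdd_transform_KK_isom} essentially line for line: well-definedness, surjectivity via the Baaj--Julg lifting of \cref{prop:lift}, and injectivity by lifting a bounded homotopy (\cref{main:lift}/\cref{thm:lift_homotopy}) and then bridging the endpoints with the functional-dampening operator-homotopy of \cref{prop:functional_dampening}. One hidden dependency worth making explicit: to invoke \cref{prop:functional_dampening} one needs a self-adjoint $W_j\subset\Lip^0(\D_j)\cap\Lip(f_j(\D_j))$ with $\pi_j(A)\subset\overline{W_j}$, and this extra output is supplied by the full statement of \cref{thm:lift_homotopy} rather than the abbreviated Theorem~\ref{main:lift} you cite.
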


We continue to provide an alternative description of the homotopy equivalence relation at the unbounded level. In bounded $\KK$-theory, it is well known that the homotopy relation coincides with the relation obtained from unitary equivalences, operator-homotopies, and addition of degenerate modules. 
We will prove an analogous statement in unbounded $\KK$-theory. We consider two notions of degenerate cycles, namely \emph{spectrally degenerate} cycles (for which $\D$ is invertible and $\D|\D|^{-1}$ commutes with $A$) and \emph{algebraically degenerate} cycles (for which $A$ is represented trivially). 
We then consider the equivalence relation $\sim_{oh+d}$ obtained from unitary equivalences, operator-homotopies, and addition of algebraically and spectrally degenerate cycles. Our next main result then reads: 

\begin{main}
\label{main:op-hom_mod_null}
Degenerate cycles are null-homotopic. 
Furthermore, if $A$ is separable, then the homotopy equivalence relation $\sim_h$ on $\overline{\Psi}_1(A,B)$ coincides with the equivalence relation $\sim_{oh+d}$. 
\end{main}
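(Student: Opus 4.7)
The plan is to establish the two assertions in turn. For null-homotopy of an algebraically degenerate cycle $(E, \D)$, a direct cone construction suffices: set $\til E := E \hot C_0((0, 1], B)$, viewed as a Hilbert $C([0, 1], B)$-module by extending by zero at $t = 0$, and equip it with $\mS := \D \hot 1$. Because $A$ acts as zero, the boundedness of $[\mS, a]$ and the compactness of $a(1+\mS^2)^{-1/2}$ are trivial, so $(\til E, \mS)$ is a valid unbounded cycle over $C([0, 1], B)$; its evaluation at $t = 1$ recovers $(E, \D)$, while its evaluation at $t = 0$ is the zero cycle. For a spectrally degenerate $(E, \D)$ this cone does not work verbatim because the compactness condition is no longer automatic, so instead I would argue via the bounded transform. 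Using that $\sgn \D$ commutes with $A$, the bounded transforms $(E, F_{c\D})$ for $c \ge 1$ furnish a bounded operator-homotopy from $(E, F_\D)$ to the degenerate bounded Kasparov module $(E, \sgn \D)$, the latter being null-homotopic via the bounded cone. Main Theorem A then lifts this bounded null-homotopy to an unbounded homotopy from $(E, f(\D))$ to the zero cycle for some dampening function $f$; combined with the operator-homotopy $(E, \D) \sim_{oh} (E, f(\D))$ provided by functional dampening, this yields null-homotopy of $(E, \D)$.

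The inclusion $\sim_{oh+d} \subseteq \sim_h$ is then immediate, since operator-homotopies and unitary equivalences are homotopies, and the first assertion ensures that adding a degenerate cycle is, up to homotopy, the same as adding the zero cycle. For the reverse inclusion, suppose $(E_0, \D_0) \sim_h (E_1, \D_1)$. By Main Theorem B their bounded transforms agree in $\KK(A, B)$, and so by the classical characterisation of Kasparov's homotopy relation there exist bounded degenerate Kasparov modules $N_0, N_1$ together with a finite chain of unitary equivalences and bounded operator-homotopies connecting $(E_0, F_{\D_0}) \oplus N_0$ to $(E_1, F_{\D_1}) \oplus N_1$. The strategy is to lift each step of this chain to the unbounded level: each $N_j$ is realised, after a bounded operator-homotopy, as the bounded transform of an unbounded cycle obtained by splitting its underlying module into the essential part where $F^2 = 1$ (yielding a spectrally degenerate lift with $\D' = F'$) and the complementary part on which $A$ acts trivially (yielding an algebraically degenerate lift via $\D' = F'/\sqrt{1-(F')^2}$); each bounded operator-homotopy is lifted, via a refinement of Main Theorem A, to an unbounded operator-homotopy sandwiched between functional dampenings at the endpoints; and unitary equivalences pass through the bounded transform verbatim.

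The principal obstacle is the second of these, namely lifting a bounded operator-homotopy $(E, F_t)$ with fixed underlying module $E$ to an unbounded operator-homotopy $(E, \D_t)$, rather than to a merely-homotopic unbounded cycle over $C([0, 1], B)$. This will require revisiting the construction underlying Main Theorem A and verifying that when the input bounded homotopy has constant underlying module $E \hot C([0, 1], B)$, the lifted operator $\mS$ can be arranged to decompose pointwise in $t$ as a continuous family of regular self-adjoint operators on $E$; together with functional dampening at each endpoint, this will produce the required unbounded operator-homotopy and complete the reduction of $\sim_h$ to $\sim_{oh+d}$.
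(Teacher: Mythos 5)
Your treatment of algebraically degenerate cycles via the cone over $(0,1]$ matches the paper's \cref{lem:alg_deg_null}. However, for spectrally degenerate cycles there is a gap: you route through the bounded transform and \cref{main:lift}, which requires $A$ to be separable, whereas the first assertion of the theorem is stated for arbitrary $\sigma$-unital $A$. The paper explicitly flags this limitation and instead constructs a direct unbounded homotopy $(C_0((0,1],E),\til\D)$ with $\D_t := t^{-1}\sgn(\D)|\D|^t$ (\cref{prop:deg_hom_0}), verifying by hand that the resolvents behave correctly as $t\searrow 0$ and that the commutators $[\D_t,w]$ are uniformly bounded via an operator-monotonicity estimate. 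Your argument would need to be replaced or supplemented by something of this kind to cover the non-separable case.

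For the second assertion, the overall architecture you propose (Kasparov's $[\S6$, Theorem~$1]$ at the bounded level, lift the degenerate modules and the bounded operator-homotopy separately, functional-dampen at the endpoints) is essentially the same as the paper's proof of \cref{thm:lift_op-hom_mod_null}, and your identification of the lift of an operator-homotopy as the crux is correct; the paper addresses this precisely by adding the clause in \cref{thm:lift_homotopy} that an operator-homotopy lifts to an operator-homotopy, which holds because the lifting construction never changes the underlying Hilbert module. The genuine gap in your plan is the proposed treatment of the degenerate modules $N_j$: you suggest splitting each $N_j$ into the part where $F^2=1$ and a complementary part on which $A$ acts trivially. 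In a Hilbert $C^*$-module there is no guarantee that $\Ker(1-F^2)$ is complemented, nor that the obvious candidate $\overline{(1-F^2)E}$ furnishes an orthogonal complement, so this decomposition need not exist. The paper circumvents this by adjoining the algebraically degenerate module $(0,E_j'^\op,-F_j')$ and applying the rotation trick of $[\S 17.6]$ of Blackadar to pass from $F_j'\oplus(-F_j')$ to an operator $\hat{F_j'}$ satisfying $(\hat{F_j'})^2=1$ and $[\hat{F_j'},\pi_j'(a)]=0$ globally (\cref{eq:bdd_op-hom}); the resulting module is then lifted to a spectrally degenerate unbounded cycle by \cref{lem:deg_lift}, with no splitting required. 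You would need to adopt this rotation argument (or something equivalent) in place of the direct splitting. Finally, note that the formula $\D' = F'/\sqrt{1-(F')^2}$ on the putative "trivial action" summand is unnecessary once $A$ acts as zero; one can simply take $\D'=0$, or any lift supplied by \cref{prop:lift}.
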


We prove the first statement by explicitly constructing a homotopy between degenerate cycles and the zero cycle. 
The second statement is then obtained by combining \cite[\S6, Theorem 1]{Kas80} with \cref{main:lift}. 

Let us briefly compare our work with the existing literature on unbounded Kasparov modules. 
First, we note that, in the usual approach to unbounded $\KK$-theory, it is necessary to make a fixed choice of a dense $*$-subalgebra $\A\subset A$, and to consider only those unbounded Kasparov $A$-$B$-modules $(E,\D)$ for which $\A\subset\Lip(\D)$, to ensure that the direct sum is well-defined. 
This means that any equivalence relation on unbounded Kasparov $A$-$B$-modules only applies to those unbounded Kasparov modules which are defined using the \emph{same} choice of $\A$. 
Thus it is impossible to compare unbounded Kasparov modules which are defined with respect to \emph{different} choices of $\A$. 
One major advantage of our approach is that, instead of fixing a choice of $*$-subalgebra $\A$, we consider the slightly weaker notion of \emph{unbounded cycles}, which only requires that $A\subset\overline{\Lip(\D)}$. For such cycles the direct sum is well-defined in full generality. In particular, the notion of homotopy equivalence can then be used to compare \emph{arbitrary} unbounded $A$-$B$-cycles. 
Nevertheless, we will show that Theorems \ref{main:lift}-\ref{main:op-hom_mod_null} remain valid if we do fix a countably generated dense $*$-subalgebra $\A\subset A$, and replace $\overline{\UKK}(A,B)$ by the semigroup $\UKK(\A,B)$ given by homotopy equivalence classes of all those unbounded Kasparov modules $(\pi,E,\D)$ for which $\pi(\A) \subset \Lip(\D)$. 

Other equivalence relations on unbounded Kasparov modules have already been considered in the literature, namely the bordism relation \cite{DGM18} and the stable homotopy relation \cite{Kaa19pre}. Both of these approaches rely on a fixed choice of a dense $*$-subalgebra $\A\subset A$. 
Let us discuss the relationships between homotopy equivalence, stable homotopy equivalence, and bordism. 
The paper \cite{DGM18} studies a notion of bordism of unbounded Kasparov modules due to Hilsum \cite{Hil10}, and proves that there is a surjective homomorphism from the corresponding bordism group $\Omega(\A,B)$ to Kasparov's $\KK$-group $KK(A,B)$. In particular, from \cref{main:isomorphism} we obtain a surjective homomorphism to our $\overline{\UKK}$-group, which means that the bordism relation is weaker than the homotopy relation. However, it remains an open question if these relations coincide or not. 
One technical tool appearing in \cite{DGM18} is the notion of weakly degenerate module, which is shown to be null-bordant. As a spin-off from our study of Clifford symmetric modules, we give a direct proof in \cref{lem:weakly_deg} that any weakly degenerate cycle is also null-homotopic (without assuming $A$ to be separable). 

After the appearance of \cite{DGM18} as a preprint in 2015, there has been increased interest within the community regarding equivalence relations on unbounded Kasparov modules. Discussions between the authors and Kaad in November 2018 gave the problem new impetus. The subsequent paper \cite{Kaa19pre} by Kaad provides a first study of homotopies of unbounded Kasparov modules. The work in the present paper was initiated independently and the methods developed here are complementary to those in \cite{Kaa19pre}. The main technical results, our Theorem A and \cite[Proposition 6.2]{Kaa19pre} are very distinct in spirit and lend themselves to different types of applications. Our proofs of Theorems \ref{main:lift}-\ref{main:op-hom_mod_null} are independent of the results from \cite{Kaa19pre}. Moreover it should be noted that our Theorem B is stronger than the main result in \cite{Kaa19pre} in the sense we now explain.

In \cite{Kaa19pre}, Kaad fixes a countably generated dense $*$-subalgebra $\A\subset A$ and considers the notion of \emph{stable homotopy} of unbounded Kasparov $\A$-$B$-modules. Stable homotopy is a weakening of the homotopy equivalence relation obtained from homotopy equivalences and addition of `spectrally decomposable' modules. It is then proved that the resulting set of equivalence classes of unbounded Kasparov $\A$-$B$-modules forms an abelian group which (if $A$ is separable) is isomorphic to Kasparov's $\KK$-group. In particular, this group does not depend on the choice of the dense $*$-subalgebra $\A\subset A$ (up to isomorphism).

As described above, we avoid in the present paper the need to fix a countably generated dense $*$-subalgebra $\A\subset A$ in the definition of the unbounded $\KK$-group. 
Even more importantly, thanks to our new approach towards lifting a homotopy in \cref{main:lift} (adapting the more refined lifting methods of \cite{Kuc00, MR16}), we overcome the need to weaken the homotopy equivalence relation by addition of spectrally decomposable modules. 
Furthermore, we will also show that, in fact, adding spectrally decomposable modules does not weaken the homotopy equivalence relation after all. Indeed, any spectrally decomposable module is just a bounded perturbation of a spectrally degenerate module. Consequently, it follows from \cref{main:op-hom_mod_null} that any spectrally decomposable cycle is null-homotopic, so that the relation of stable homotopy equivalence coincides with homotopy equivalence. 
We point out that, combined with the main results from \cite{Kaa19pre}, this provides a second and independent proof of \cref{main:isomorphism}. 

Finally, let us briefly summarise the layout of this paper. We start in \cref{sec:Kasmod} with our definition of unbounded cycles, and we show that the direct sum is well-defined. In \cref{sec:C_J,sec:lift} we recall the lifting construction from \cite{BJ83}, closely following the arguments of \cite{MR16,Kuc00}. 
We collect some basic facts regarding regular self-adjoint operators in \cref{sec:reg_sa}. 

In \cref{sec:semigroup} we introduce the homotopy relation (as well as the special case of operator-homotopies), and construct the semigroup $\overline{\UKK}(A,B)$. In \cref{sec:functional_dampening} we show that the notion of functional dampening can be implemented via an operator-homotopy. In \cref{sec:lift_homotopy} we construct the lift of a homotopy and prove \cref{main:lift} (see \cref{thm:lift_homotopy}). Combined with the operator-homotopy obtained from functional dampening, we then obtain \cref{main:isomorphism} (see \cref{thm:bdd_transform_KK_isom}). 

We introduce our notions of algebraically and spectrally degenerate cycles in \cref{sec:degenerate}, and we prove that degenerate cycles are null-homotopic (\cref{lem:alg_deg_null,prop:deg_hom_0}). In \cref{sec:op-hom_mod_null} we then show that any homotopy can be implemented as an operator-homotopy modulo addition of degenerate cycles (\cref{thm:lift_op-hom_mod_null}), which completes the proof of \cref{main:op-hom_mod_null}. 

We give a direct proof that $\overline{\UKK}(A,B)$ is a group (and not just a semigroup) in \cref{sec:symmetries}. In the case where $A$ is separable, this follows immediately from the isomorphism $\overline{\UKK}(A,B) \simeq \KK(A,B)$, but our direct proof works for any pair $(A,B)$ of $\sigma$-unital $C^{*}$-algebras. 
The proof relies on the observation that the presence of certain symmetries induces homotopical triviality. After a brief discussion of Lipschitz regular cycles in \cref{sec:Lip_reg}, we introduce the notion of spectrally symmetric cycles in \cref{sec:spec_symm}. These cycles are a mild generalisation of the notion of spectrally decomposable modules introduced in \cite{Kaa19pre}. We prove that any spectrally symmetric cycle is a bounded perturbation of a spectrally degenerate cycle, and therefore null-homotopic. 
In \cref{sec:Cliff_symm} we introduce the notion of Clifford symmetric cycles, which are elements of $\overline{\Psi}_1(A,B)$ which extend to $\overline{\Psi}_1(A\hot\CCliff_1,B)$. We prove that every Clifford symmetric cycle is operator-homotopic to a spectrally symmetric cycle and therefore null-homotopic. The proof is easily generalised to show that in fact every weakly degenerate cycle is null-homotopic. 
We exploit such Clifford symmetries to prove in \cref{sec:group} that the semigroup $\overline{\UKK}(A,B)$ is in fact a group. 

Finally, \cref{sec:appendix} contains some basic facts regarding localisations of Hilbert $C^*$-modules and their dense submodules.

\subsubsection*{Acknowledgements}
This work was initiated during the thematic program \emph{Bivariant K-theory in Geometry and Physics}, Vienna, November 2018, and the authors thank the organisers, as well as the Erwin Schr\"odinger Institute (ESI), for their hospitality. 
The authors also thank the Hausdorff Center for Mathematics in Bonn for hospitality and support. BM thanks the Max Planck Institute for Mathematics in Bonn. We thank Magnus Goffeng and Jens Kaad for inspiring conversations. We also thank Magnus Goffeng for a careful proofreading of this paper and Robin Deeley for his comments on the manuscript.

\subsubsection*{Notation and conventions}

Let $A$ and $B$ denote $\sigma$-unital $\Z_2$-graded $C^*$-algebras. 
By an approximate unit for $A$ we will always mean an even, positive, increasing, and contractive approximate unit for the $C^{*}$-algebra $A$. For elements $a,b\in A$ we denote by $[a,b]$ the graded commutator. If $a$ and $b$ are homogenous, we denote by $\deg a,\deg b\in \mathbb{Z}_2$ their degree and $[a,b]:=ab-(-1)^{\deg a\deg b}ba$. For general $a,b$ we extend the graded commutator by linearity. 
Let $E$ be a $\Z_2$-graded Hilbert $C^{*}$-module over $B$, or Hilbert $B$-module for short (for definitions and further details regarding Hilbert $C^*$-modules, we refer to the books \cite{Lance95,Blackadar98}). Throughout this article, we will assume $E$ is countably generated. 
We write $\End^{*}_{B}(E)$ for the adjointable operators on $E$, and $\End^{0}_{B}(E)$ for the compact operators on $E$. 
For any subset $W\subset\End_B^*(E)$, we write $\overline{W}$ for the closure of $W$ with respect to the operator-norm of $\End^*_B(E)$.

\section{Unbounded cycles}
\label{sec:Kasmod}

Kasparov \cite{Kas80} defined the abelian group $\KK(A,B)$ as a set of homotopy equivalence classes of Kasparov $A$-$B$-modules. 
We briefly recall the main definitions (more details can be found in e.g.\ \cite[\S17]{Blackadar98}). 

A (bounded) \emph{Kasparov $A$-$B$-module} is a triple $(\pi,E,F)$ consisting of a $\Z_2$-graded, countably generated, right Hilbert $B$-module $E$, a ($\Z_2$-graded) $*$-homomorphism $\pi\colon A\to\End_B^*(E)$, and an odd adjointable endomorphism $F\in\End_B^*(E)$ such that for all $a\in A$: 
\[\pi(a)(F-F^*), \quad [F,\pi(a)], \quad \pi(a)(F^2-1)\in\End^{0}_{B}(E).\] 
Two Kasparov $A$-$B$-modules $(\pi_0,E_0,F_0)$ and $(\pi_1,E_1,F_1)$ are called \emph{unitarily equivalent} (denoted with $\simeq$) if there exists an even unitary in $\Hom_B(E_0,E_1)$ intertwining the $\pi_j$ and $F_j$ (for $j=0,1$). 
A \emph{homotopy} between $(\pi_0,E_0,F_0)$ and $(\pi_1,E_1,F_1)$ is given by a Kasparov $A$-$C([0,1],B)$-module $(\til\pi,\til E,\til F)$ such that \[\ev_j(\til\pi,\til E,\til F) \simeq (\pi_j,E_j,F_j), \quad j=0,1.\] 
A homotopy $(\til\pi,\til E,\til F)$ is called an \emph{operator-homotopy} if there exists a Hilbert $B$-module $E$ with a representation $\pi\colon A\to\End_B^*(E)$ such that $\til E$ equals the Hilbert $C([0,1],B)$-module $C([0,1],E)$ with the natural representation $\til\pi$ of $A$ on $C([0,1],E)$ induced from $\pi$, and if $\til F$ is given by a \emph{norm}-continuous family $\{F_t\}_{t\in[0,1]}$. 
A module $(\pi,E,F)$ is called \emph{degenerate} if $\pi(a)(F-F^*) = [F,\pi(a)] = \pi(a)(F^2-1) = 0$ for all $a\in A$. 

The $\KK$-theory $\KK(A,B)$ of $A$ and $B$ is defined as the set of homotopy equivalence classes of (bounded) Kasparov $A$-$B$-modules. 
Since homotopy equivalence respects direct sums, the direct sum of Kasparov $A$-$B$-modules induces a (commutative and associative) binary operation (`addition') on the elements of $\KK(A,B)$ such that $\KK(A,B)$ is in fact an abelian group \cite[\S4, Theorem 1]{Kas80}. %\cite[Proposition 17.3.3]{Blackadar98}. 

In this paper we will give a completely analogous description of $\KK$-theory, based instead on unbounded Kasparov modules \cite{BJ83}. 
Recall that a closed densely defined symmetric operator $\D\colon\Dom \D\to E$ is self-adjoint and regular if the operators $\D\pm i:\Dom \D \to E$ have dense range. We refer to \cite[Chapter 9 and 10]{Lance95} for details on regular operators on Hilbert modules.
For a self-adjoint regular operator $\D\colon\Dom \D\to E$, we write
\[
\Lip(\D) := \big\{ T\in\End^{*}_{B}(E): T(\Dom \D)\subset\Dom\D \; \& \; [\D,T]\in \End^{*}_{B}(E) \big\} . 
\]
It is worth noting that, because $\D$ is densely defined, $\overline{\Lip(\D)}\cap\End^{0}_{B}(E)$ is equal to $\End^{0}_{B}(E)$.
However, in general $\overline{\Lip(\D)}$ is not equal to $\End^{*}_{B}(E)$. 
We also introduce 
\[
\Lip^0(\D) := \big\{ T \in \Lip(\D) : T (1+\D^2)^{-\frac12} , T^* (1+\D^2)^{-\frac12} \in \End_B^0(E) \big\} . 
\]
We note that $\Lip^0(\D)$ is a $*$-subalgebra of $\End_B^*(E)$. 
We introduce the following relaxation of the notion of unbounded Kasparov module.
\begin{defn}
\label{def: Kasmod}
An \emph{unbounded $A$-$B$-cycle} $(\pi,E,\D)$ consists of a $\Z_2$-graded, countably generated Hilbert $B$-module $E$, a $\Z_2$-graded $*$-homomorphism $\pi \colon A \to \End_B(E)$, and an odd regular self-adjoint operator $\D$ on $E$, 
such that 
\[
\pi(A) \subset \overline{\Lip^0(\D)} .
\]
The set of all unbounded $A$-$B$-cycles is denoted $\overline{\Psi}_1(A,B)$. 
We will often suppress the representation $\pi$ in our notation and simply write $(E,\D)$ instead of $(\pi,E,\D)$. 
\end{defn}

\begin{remark}
\label{remark:Kasmod}
\begin{enumerate}
\item 
It follows immediately from the definition that $\pi(a) (1+\D^2)^{-\frac12} \in \End_B^0(E)$ for any $a\in A$, i.e.\ $\D$ has `$A$-locally compact' resolvents. 

\item 
We point out that if $\pi(A) \subset \End_B^0(E)$ (i.e.\ $A$ is represented as compact operators), then the condition $\pi(A) \subset \overline{\Lip^0(\D)}$ is automatically satisfied, since $\Lip(\D)\cap\End_B^0(E) \subset \Lip^0(\D)$ is always dense in $\End_B^0(E)$. 
\end{enumerate}
\end{remark}

\begin{remark}
\label{rmk: Kasmod_ordinary}
We use the term unbounded $A$-$B$-cycle since our definition is different from the usual definition of an unbounded Kasparov module, originally given in \cite{BJ83} (see also \cite[17.11.1]{Blackadar98}). 
An unbounded $A$-$B$-cycle $(\pi,E,\D)$ is an unbounded Kasparov module if there exists a dense $*$-subalgebra $\A\subset A$ such that $\pi(\A) \subset \Lip^0(\D)$. 
To avoid confusion we often refer to such cycles as \emph{ordinary unbounded Kasparov modules.}
\end{remark}
Our main reason for relaxing this definition is the following simple lemma. 
\begin{lem}
\label{lem:semigroup}
The direct sum of unbounded $A$-$B$-cycles is well-defined, and therefore $\overline{\Psi}_1(A,B)$ is a semigroup. 
\end{lem}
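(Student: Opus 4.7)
The direct sum of two unbounded cycles $(\pi_0, E_0, \D_0)$ and $(\pi_1, E_1, \D_1)$ is the natural candidate $(\pi_0\oplus\pi_1,\, E_0\oplus E_1,\, \D_0\oplus\D_1)$. My plan is to verify the conditions of \cref{def: Kasmod} in turn, then to show the result is independent of any auxiliary choices made along the way (of which there are essentially none). The first steps are routine: the direct sum of two countably generated $\Z_{2}$-graded Hilbert $B$-modules is again countably generated and graded; the diagonal homomorphism $\pi_{0}\oplus\pi_{1}$ is even whenever each $\pi_{j}$ is; and $\D_{0}\oplus\D_{1}$ with domain $\Dom\D_{0}\oplus\Dom\D_{1}$ is odd, closed, symmetric, and has $(\D_{0}\oplus\D_{1})\pm i$ of dense range by applying the regularity of each $\D_{j}$ separately. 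Thus $\D_{0}\oplus\D_{1}$ is regular self-adjoint.

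The entire content of the lemma is therefore the verification that $(\pi_{0}\oplus\pi_{1})(A)\subset\overline{\Lip^{0}(\D_{0}\oplus\D_{1})}$. The key observation I would record first is the inclusion
\[
\Lip^{0}(\D_{0})\oplus\Lip^{0}(\D_{1})\;\subset\;\Lip^{0}(\D_{0}\oplus\D_{1}),
\]
which follows immediately from the identities $[\D_{0}\oplus\D_{1},\,T_{0}\oplus T_{1}]=[\D_{0},T_{0}]\oplus[\D_{1},T_{1}]$ and $(1+(\D_{0}\oplus\D_{1})^{2})^{-\frac12}=(1+\D_{0}^{2})^{-\frac12}\oplus(1+\D_{1}^{2})^{-\frac12}$, together with the fact that a direct sum of compact operators on Hilbert $B$-modules is compact. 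Given $a\in A$, the hypothesis supplies sequences $T_{n}^{(j)}\in\Lip^{0}(\D_{j})$ with $T_{n}^{(j)}\to\pi_{j}(a)$ in operator norm for $j=0,1$. Then $T_{n}^{(0)}\oplus T_{n}^{(1)}\in\Lip^{0}(\D_{0}\oplus\D_{1})$ converges in norm to $\pi_{0}(a)\oplus\pi_{1}(a)$, giving the required inclusion.

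Finally, associativity and commutativity of $\oplus$ (up to the obvious unitary equivalences) make $\overline{\Psi}_{1}(A,B)$ into a semigroup. The main conceptual point to emphasise is the one alluded to in \cref{rmk: Kasmod_ordinary}: for ordinary unbounded Kasparov modules one would need a single dense $*$-subalgebra $\A\subset A$ sitting inside both $\Lip^{0}(\D_{0})$ and $\Lip^{0}(\D_{1})$, and no such algebra need exist. Working modulo the norm-closure circumvents this obstacle entirely, since the approximations of $\pi_{0}(a)$ and $\pi_{1}(a)$ can be chosen independently and then assembled diagonally. Apart from being careful that the Lipschitz commutator and compact-resolvent conditions both pass through the diagonal construction simultaneously, there is no real obstacle to overcome.
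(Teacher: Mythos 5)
Your proposal is correct and is essentially the paper's own argument: both hinge on the inclusion $\Lip^{0}(\D_{0})\oplus\Lip^{0}(\D_{1})\subset\Lip^{0}(\D_{0}\oplus\D_{1})$ together with the fact that $\pi_{j}(A)\subset\overline{\Lip^{0}(\D_{j})}$, from which the required inclusion $(\pi_{0}\oplus\pi_{1})(A)\subset\overline{\Lip^{0}(\D_{0}\oplus\D_{1})}$ follows by taking closures. You simply spell out the routine verifications (gradedness, regularity of $\D_{0}\oplus\D_{1}$, and the explicit approximating sequences) that the paper leaves implicit.
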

\begin{proof} 
Given unbounded  $A$-$B$-cycles $(\pi_{i}, E_{i},\D_{i})$, $i=0,1$, we have $\Lip^0(\D_0)\oplus\Lip^0(\D_1)\subset\Lip^0(\D_0\oplus \D_1)$ and $\pi_i(A) \subset \overline{\Lip^0(\D_i)}$. It follows that
\[(\pi_0\oplus\pi_1)(A)\subset \overline{\Lip^0(\D_0)\oplus\Lip^0(\D_1)}\subset\overline{\Lip^0(\D_0\oplus \D_1)},\] 
and therefore $(\pi_{0}\oplus \pi_{1}, E_{0}\oplus E_{1}, \D_{0}\oplus \D_{1})$ is also an unbounded $A$-$B$-cycle. 
\end{proof}
\begin{remark}
\label{remark:subalgebra}
Note that if there are dense $*$-subalgebras $\A_{i}\subset A$ such that $\pi_i(\A_i)\subset \Lip(\D_i)$, it may not be possible to find a dense $*$-subalgebra $\A\subset A$ such that $$(\pi_0\oplus\pi_1)(\A) \subset \Lip^0(\D_0\oplus\D_1).$$ 
In fact, even if $E_0=E_1$ and $\pi_0=\pi_1=\pi$, the intersection $$\Lip(\D_0)\cap\Lip(\D_1)\cap\pi(A)$$ might not be dense in $\pi(A)$ (for an example, see for instance \cite[Appendix A]{DGM18}). Hence the direct sum is not well-defined on \emph{ordinary} unbounded Kasparov modules. 
The usual way around this problem is to \emph{fix} a dense $*$-subalgebra $\A\subset A$, and to consider only those unbounded Kasparov modules $(\pi,E,\D)$ for which $\pi(\A)\subset\Lip^0(\D)$. 
With our relaxed condition $\pi(A) \subset \overline{\Lip^0(\D)}$, we avoid the need to make such a choice for $\A$.
\end{remark}

\begin{lem}
\label{lem:separable_Lip}
Let $(\pi,E,\D)$ be an unbounded $A$-$B$-cycle, and suppose that $A$ is separable. 
Then there exists a countable subset $W \subset \Lip^0(\D)$ consisting of products of elements in $\Lip^0(\D)$ (i.e.\ each $w\in W$ is of the form $w=T_1T_2$ for $T_1,T_2\in\Lip^0(\D)$) such that $\pi(A) \subset \overline{W}$. 
\end{lem}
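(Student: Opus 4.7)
The plan is to manufacture products in $\Lip^0(\D)$ that approximate $\pi(A)$ by combining a countable dense subset of $\pi(A)$ with a countable approximate unit, and norm-approximating both inside $\Lip^0(\D)$. Since $A$ is separable, so is the $C^*$-subalgebra $\pi(A) \subset \End_B^*(E)$, and hence it admits both a countable dense subset $\{a_n\}_{n\in\N}$ and a countable (sequential) approximate unit $\{e_m\}_{m\in\N}$, for instance obtained as the image of a countable approximate unit for $A$. Since each $a_n$ and each $e_m$ lies in $\overline{\Lip^0(\D)}$, I would then choose sequences $(S_{n,k})_{k\in\N}$ and $(V_{m,k})_{k\in\N}$ in $\Lip^0(\D)$ converging in norm to $a_n$ and $e_m$ respectively.

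The candidate set is then $W := \{V_{m,k} S_{n,k} : m,n,k \in \N\}$. This is countable, every element is manifestly a product of two elements of $\Lip^0(\D)$, and $W \subset \Lip^0(\D)$ because the latter is a $*$-algebra. To verify $\pi(A) \subset \overline{W}$, fix $a \in \pi(A)$ and $\epsilon > 0$: first pick $n$ with $\|a - a_n\| < \epsilon/4$; then pick $m$ with $\|a_n - e_m a_n\| < \epsilon/4$; and finally pick $k$ large enough that both $\|V_{m,k} - e_m\|\cdot\|a_n\| < \epsilon/4$ and $\|V_{m,k}\|\cdot\|S_{n,k} - a_n\| < \epsilon/4$ (the latter using that $(\|V_{m,k}\|)_k$ is bounded since $V_{m,k}\to e_m$ in norm). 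Inserting the intermediate terms $a_n$, $e_m a_n$, and $V_{m,k} a_n$ and applying the triangle inequality then yields $\|a - V_{m,k} S_{n,k}\| < \epsilon$.

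The main subtlety I anticipate is that $\Lip^0(\D)$ itself need not contain an approximate unit, so one cannot produce the desired factorisation inside $\Lip^0(\D)$ directly; instead one must import the approximate unit of $\pi(A)$ by norm approximation from $\overline{\Lip^0(\D)}$, which forces the three-fold approximation over the indices $n$, $m$, $k$ to be carried out in that particular order (so that each choice can be made depending on the previous ones). Beyond this mild bookkeeping, the argument is essentially immediate.
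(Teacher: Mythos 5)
Your argument is correct and follows essentially the same route as the paper: the paper exploits that products are dense in any $C^*$-algebra (a consequence of the existence of an approximate unit) to pick a countable dense set of products $\{a_j b_j\} \subset A$, then norm-approximates each factor by a sequence in $\Lip^0(\D)$ and takes $W$ to be the resulting products; you simply make the approximate-unit mechanism behind the density of products explicit, which leads to the same kind of double-indexed set and the same triangle-inequality bookkeeping.
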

\begin{proof}
Since $A$ is separable, and since products are dense in any $C^*$-algebra, we may pick a countable dense subset of products $\{a_j b_j\}_{j\in\N} \subset A$. Since $\pi(A) \subset \overline{\Lip^0(\D)}$, there exist sequences $\{v_{j,k}\}_{k\in\N} , \{w_{j,k}\}_{k\in\N} \subset \Lip^0(\D)$ such that $$\lim_{k}\|a_j-v_{j,k}\| = \lim_{k}\|b_j-w_{j,k}\| = 0.$$ 
The statement then holds with $W := \{ v_{j,k} w_{j,k} \}_{j,k\in\N}$. 
\end{proof}

Baaj and Julg proved for any ordinary unbounded Kasparov module that the bounded transform $\D \mapsto F_\D:= D(1+\D^2)^{-\frac12}$ yields a bounded Kasparov module and hence a $\KK$-class. 
Before we continue, we need to show that this still holds for our relaxed definition of unbounded cycles. 

\begin{prop}[{cf.\ \cite{BJ83}}]
\label{prop:bdd_transform}
If $(\pi,E,\D)$ is an unbounded  $A$-$B$-cycle (as in \cref{def: Kasmod}), then the bounded transform $(\pi, E, F_\D)$ is a bounded Kasparov module and hence defines an element in $\KK(A,B)$.
\end{prop}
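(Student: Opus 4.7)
The plan is a two-stage argument: verify the three bounded-Kasparov-module conditions first on the dense subset $\Lip^0(\D)$, where the classical Baaj-Julg machinery applies verbatim, and then extend them to $\pi(A)\subset\overline{\Lip^0(\D)}$ by norm-continuity.

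In the first stage I would verify the three conditions directly for every $T\in\Lip^0(\D)$. Self-adjointness and oddness of $F_\D$ follow at once from the functional calculus applied to the odd self-adjoint regular operator $\D$, so that $F_\D-F_\D^{*}=0$ and the first condition is trivial. The identity $F_\D^{2}-1=-(1+\D^2)^{-1}=-(1+\D^2)^{-1/2}(1+\D^2)^{-1/2}$, combined with the defining property $T(1+\D^2)^{-1/2}\in\End_B^0(E)$ of $\Lip^0(\D)$, immediately yields $T(F_\D^{2}-1)\in\End_B^0(E)$. The only substantive point is compactness of the commutator $[F_\D,T]$ for $T\in\Lip^0(\D)$; this is precisely Baaj and Julg's original theorem, and I would reproduce the standard proof (cf.\ also \cite{Kuc00, MR16}): starting from the integral representation
\[
F_\D = \frac{1}{\pi}\int_0^\infty \D(1+\lambda+\D^2)^{-1}\,\lambda^{-1/2}\,d\lambda,
\]
one expands $[\D(1+\lambda+\D^2)^{-1},T]$ using the resolvent identity and the graded Leibniz rule $[\D^2,T]=\D[\D,T]+(-1)^{\deg T}[\D,T]\D$. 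The standard functional-calculus bounds $\|(1+\lambda+\D^2)^{-1}\|\le(1+\lambda)^{-1}$ and $\|\D(1+\lambda+\D^2)^{-1}\|\le\tfrac{1}{2}(1+\lambda)^{-1/2}$ then show that the resulting integral converges absolutely in norm, and rearranging each term so that the compact factor $T(1+\D^2)^{-1/2}$ (or its adjoint) is exposed shows that the integrand is compact for every $\lambda$, hence so is $[F_\D,T]$.

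In the second stage I would extend the three conditions from $T\in\Lip^0(\D)$ to all $T\in\overline{\Lip^0(\D)}$. Since $F_\D$ is bounded, the three maps $T\mapsto T(F_\D-F_\D^{*})$, $T\mapsto [F_\D,T]$, and $T\mapsto T(F_\D^{2}-1)$ are norm-continuous on $\End_B^{*}(E)$, while $\End_B^0(E)$ is norm-closed in $\End_B^{*}(E)$. Hence the set of $T\in\End_B^{*}(E)$ for which all three expressions lie in $\End_B^0(E)$ is norm-closed, contains $\Lip^0(\D)$ by the first stage, and therefore contains $\overline{\Lip^0(\D)}\supset\pi(A)$. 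The required compactness conditions consequently hold for every $T=\pi(a)$ with $a\in A$, making $(\pi,E,F_\D)$ a bounded Kasparov $A$-$B$-module.

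The main obstacle is Baaj and Julg's computation of $[F_\D,T]$ for $T\in\Lip^0(\D)$, specifically the compact-factor rearrangement of the integrand, which is the only delicate step. In the relaxed setup of \cref{def: Kasmod} the genuine novelty is only the extension from $\Lip^0(\D)$ to $\overline{\Lip^0(\D)}$, but this is essentially automatic once commutator compactness has been established on the dense $\ast$-subalgebra $\Lip^0(\D)$.
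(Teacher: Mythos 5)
Your proof is correct, but it takes a slightly different route from the paper's. You verify the commutator condition directly on $\Lip^0(\D)$ and then extend by norm-continuity; the paper instead follows the Blackadar template, first reducing (via $A^2=A$) to showing $[F_\D,a]b\in\End_B^0(E)$ for $a,b\in A$, then approximating $a$ by $T_n\in\Lip^0(\D)$, and carrying the extra right factor $b$ through the integral-formula estimate. The reason your shortcut works is that $\Lip^0(\D)$ is by construction a $*$-subalgebra: for $T\in\Lip^0(\D)$ the integrand $[\D R_\lambda,T]$ (with $R_\lambda=(1+\lambda+\D^2)^{-1}$) is compact because $T\D R_\lambda$ has the compact factor $T(1+\D^2)^{-1/2}$ exposed, while $\D R_\lambda T$ has the compact factor $(1+\D^2)^{-1/2}T=\bigl(T^*(1+\D^2)^{-1/2}\bigr)^*$ exposed, and the norm-integrability comes from the sandwiched form $[\D R_\lambda,T]=(1+\lambda)R_\lambda[\D,T]R_\lambda-\D R_\lambda[\D,T]\D R_\lambda$, of norm $\order\bigl((1+\lambda)^{-1}\bigr)$. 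The paper's version keeps the extra $b\in A$ because that is exactly what Blackadar's Proposition 17.11.3 supplies and it avoids re-deriving the compactness of $[F_\D,T]$ itself; your version instead exploits the built-in $*$-symmetry of $\Lip^0(\D)$ (both $T(1+\D^2)^{-\frac12}$ and $T^*(1+\D^2)^{-\frac12}$ compact) to dispense with the factorization step entirely, which makes the extension to $\overline{\Lip^0(\D)}$ a one-line norm-closure argument. Both are legitimate; yours is marginally cleaner, the paper's adheres more closely to the standard reference.
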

\begin{proof}
As in \cite[Proposition 17.11.3]{Blackadar98}, it suffices to show that $[F_\D,a]b$ is compact for any $a,b\in A$. By \cref{def: Kasmod}, there is a sequence $T_{n}\in\Lip^0(\D)$ such that $T_{n}\to a$ in norm, and then $[F_{\D},T_{n}] b \to [F_{\D},a]b$ in norm as well. It thus suffices to show that $[F,T]b\in\End^0_B(E)$ for $b\in A$ and $T\in\Lip^0(\D)$. 
But compactness of $[F,T]b$ follows from exactly the same argument as in \cite[Proposition 17.11.3]{Blackadar98}. 
\end{proof}

\subsection{The algebras \texorpdfstring{$C_F$ and $J_F$}{C and J}}
\label{sec:C_J}

Let $E$ be a countably generated Hilbert $B$-module. 
The following result is well known, and follows from the proof of \cite[Proposition 13.6.1]{Blackadar98} (which extends from $h\in\End_B^0(E)$ to arbitrary $h\in\End_B^*(E)$). 
\begin{lem}[{cf.\ \cite[Proposition 13.6.1]{Blackadar98}}]
\label{lem:dense_range_cpts}
Let $h\in\End_B^{*}(E)$. Then $hE$ is dense in $E$ if and only if $h\cdot\End_B^0(E)$ is dense in $\End_B^0(E)$. 
\end{lem}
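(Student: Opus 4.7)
\medskip

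The plan is to prove the two directions separately, using nothing more than the rank-one operators and the existence of an approximate unit in $\End_B^0(E)$.

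For the forward implication ($hE$ dense $\Rightarrow$ $h\cdot\End_B^0(E)$ dense), I would use that the rank-one operators $\theta_{x,y}\colon z\mapsto x\la y,z\ra$ span a dense subspace of $\End_B^0(E)$, so it suffices to approximate each $\theta_{x,y}$ by an element of $h\cdot\End_B^0(E)$. Given $\epsilon>0$, density of $hE$ lets me pick $x_n\in E$ with $\|hx_n - x\|<\epsilon/(1+\|y\|)$. The key algebraic identity is $h\theta_{x_n,y}=\theta_{hx_n,y}$, from which the norm bound $\|\theta_{x,y}-h\theta_{x_n,y}\|=\|\theta_{x-hx_n,y}\|\le \|x-hx_n\|\,\|y\|$ delivers the approximation. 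Taking finite sums extends this to all compacts.

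For the reverse implication ($h\cdot\End_B^0(E)$ dense $\Rightarrow$ $hE$ dense), I would use the existence of a (sequential) approximate unit $\{u_n\}\subset\End_B^0(E)$, which exists because $E$ is countably generated, and which satisfies $u_n x\to x$ for every $x\in E$. Given $x\in E$ and $\epsilon>0$, choose $n$ with $\|u_n x - x\|<\epsilon/2$, and then by hypothesis pick $K_n\in\End_B^0(E)$ with $\|hK_n - u_n\|<\epsilon/(2(1+\|x\|))$. Setting $y_n:=K_n x\in E$, the triangle inequality
\[
\|h y_n - x\| \le \|hK_n - u_n\|\,\|x\| + \|u_n x - x\| < \epsilon
\]
shows $x\in\overline{hE}$.

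I do not anticipate any genuine obstacle: the only mild subtlety is observing that Blackadar's statement, originally for $h\in\End_B^0(E)$, goes through verbatim for $h\in\End_B^*(E)$ because neither the identity $h\theta_{x,y}=\theta_{hx,y}$ nor the approximate-unit argument uses compactness of $h$. No adjoints of $h$ are needed, so there is nothing to check about regularity or self-adjointness.
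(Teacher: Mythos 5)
Your proof is correct and is essentially the standard argument that the paper delegates to Blackadar's Proposition 13.6.1 (the paper gives no proof, only the observation that Blackadar's argument extends to non-compact $h$). Both directions are sound: the identity $h\theta_{x_n,y}=\theta_{hx_n,y}$ together with $\|\theta_{u,y}\|\le\|u\|\|y\|$ handles the forward implication, and the approximate-unit argument handles the reverse. You correctly identify that compactness of $h$ is never used, which is exactly the extension the paper asserts. The only point worth spelling out (or citing) is that an approximate unit $\{u_n\}$ for $\End_B^0(E)$ does indeed satisfy $u_n x\to x$ for all $x\in E$; this follows from $\|(1-u_n)x\|^2=\|\theta_{(1-u_n)x,(1-u_n)x}\|=\|(1-u_n)\theta_{x,x}(1-u_n)\|\le\|(1-u_n)\theta_{x,x}\|\to 0$, but it is a fact rather than a tautology, and it would be cleaner to state it explicitly rather than present it as part of the definition of approximate unit. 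Also note that $\sigma$-unitality (hence countable generation of $E$) is not actually needed here --- a net works just as well --- though it is harmless to invoke it since the paper assumes it throughout.
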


For a bounded Kasparov $A$-$B$-module $(E,F)$ with $F=F^{*}$ and $F^{2}\leq 1$, we define
\begin{align*}
C_F &:= C^{*}(1-F^{2}) + F C^{*}(1-F^{2}) , & 
J_{F} &:= \End^{0}_{B}(E) + C_F . 
\end{align*}
The $C^{*}$-algebra $J_{F}$ was introduced in \cite[Lemma 4.5]{MR16}, and plays an important role in the construction of the (unbounded) lift of a (bounded) Kasparov module. 
\begin{lem}
\label{lem:C_F}
The space $C_F$ is a separable $C^*$-algebra, and $1-F^2$ is a strictly positive element in $C_F$. 
\end{lem}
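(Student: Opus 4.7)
My plan is to identify $C_F$ with the abelian $C^*$-subalgebra $A := C^*(1-F^2, F(1-F^2))$ of $\End_B^*(E)$ generated by these two self-adjoint elements; once this identification is made, separability and strict positivity of $1-F^2$ follow almost immediately.

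First I would verify the algebraic sum $C^*(1-F^2) + F\cdot C^*(1-F^2)$ is a $*$-subalgebra: preservation of $*$ uses that $F = F^*$ commutes with every element of $C^*(1-F^2)$, while closure under multiplication follows from the expansion $(a+Fb)(c+Fd) = (ac + F^2 bd) + F(ad+bc)$ together with the identity $F^2 bd = bd - (1-F^2)bd \in C^*(1-F^2)$. For the identification $C_F = A$: the inclusion $C_F \subseteq A$ follows from $F(1-F^2)^k = F(1-F^2)\cdot(1-F^2)^{k-1} \in A$ for $k \geq 1$, which extends by norm-continuity of left multiplication by $F$ to $F \cdot C^*(1-F^2) \subseteq A$. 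Conversely, both generators of $A$ lie in $C_F$, so $A$ is contained in the norm-closure of $C_F$; interpreting $C_F$ as this closure, $C_F = A$.

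Separability of $C_F = A$ is immediate, since $A$ is generated by two elements. For the strict positivity of $1-F^2$ in the abelian algebra $C_F$, it suffices to show $\chi(1-F^2) > 0$ for every non-zero character $\chi : C_F \to \C$. Such a $\chi$ is determined by the pair $(s, r) := (\chi(1-F^2), \chi(F(1-F^2))) \in [0,1]\times\R$, subject to the relation $r^2 = s^2(1-s)$ obtained by applying $\chi$ to $(F(1-F^2))^2 = (1-F^2)^2\bigl(1-(1-F^2)\bigr)$. If $s = 0$ then $r = 0$ by this relation, so $\chi$ vanishes on both generators of $A$ and is the zero character. Hence every non-zero character satisfies $s > 0$, proving strict positivity. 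The main subtlety I expect is precisely in the identification $C_F = A$, since the algebraic sum defining $C_F$ is not automatically norm-closed in full generality, and one needs to read it as its norm-closure in $\End_B^*(E)$ for the lemma to have content.
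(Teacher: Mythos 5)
Your proof is correct, and it takes a genuinely different (and somewhat more self-contained) route than the paper's. The paper obtains separability by citing \cite[Lemma 4.5]{MR16}, and proves strict positivity by explicitly computing the Gelfand spectrum: it realises $C_F$ as a $*$-subalgebra of $C_0(\spec(F)\setminus\{\pm1\})$, invokes Stone--Weierstrass to get $C_F\simeq C_0(\spec(F)\setminus\{\pm1\})$, and then reads off that $1-x^2>0$ on this space. You instead identify $C_F$ with $C^*(1-F^2,\,F(1-F^2))$ --- which makes separability trivial (two generators) --- and sidestep Stone--Weierstrass entirely: you use only the algebraic relation $\bigl(F(1-F^2)\bigr)^2=(1-F^2)^2-(1-F^2)^3$ to deduce that any character killing $1-F^2$ must kill the other generator too, hence must be zero. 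This is leaner: you never need to pin down the character space of $C_F$, only that $1-F^2$ generates the relevant ideal structure. Your algebraic verification that the sum $C^*(1-F^2)+F\,C^*(1-F^2)$ is a $*$-subalgebra, and that its closure coincides with $C^*(1-F^2,F(1-F^2))$, is correct. You also correctly flag the one real subtlety, namely that the algebraic sum defining $C_F$ need not be norm-closed (one can produce odd functions in $C_0(\spec(F)\setminus\{\pm1\})$ not of the form $x\,g(1-x^2)$ with $g$ continuous), so $C_F$ must be read as the closure --- a point the paper treats implicitly by declaring $C_F$ to be a $C^*$-algebra and applying Stone--Weierstrass to it. Both proofs are sound; yours is the more elementary and arguably the one that makes the mechanism behind strict positivity most transparent.
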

\begin{proof}
It is explained in the proof of \cite[Lemma 4.5]{MR16} that $C_F$ is a separable $C^*$-algebra. 
By assumption, the spectrum $\spec(F)$ of $F$ is contained in $[-1,1]$, and by construction $C_F$ can be identified with a $*$-subalgebra of $C_0(\spec(F)\backslash\{\pm1\})$. Because $C_F$ vanishes nowhere and separates points of $\spec(F)\backslash\{\pm1\}$, the Stone-Weierstrass theorem implies that $C_F \simeq C_0(\spec(F)\backslash\{\pm1\})$. Since $x \mapsto 1-x^2$ is a strictly positive function on $C_0(\spec(F)\backslash\{\pm1\})$, it follows that $1-F^2$ is a strictly positive element in $C_F$. 
\end{proof}

\begin{lem}
\label{lem:J_F}
The space $J_F$ is a $\sigma$-unital $C^*$-algebra, and we have the inclusions 
$$A J_{F},\, J_{F}A,\, F J_{F},\, J_{F}F \subset J_{F}.$$ 
Furthermore, if $k\in\End_B^0(E)$ is a positive operator such that $k + (1-F^2)$ has dense range in $E$, then $k + (1-F^2)$ is strictly positive in $J_F$. 
\end{lem}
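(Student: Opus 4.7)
My plan is to establish the three assertions in order, deferring $\sigma$-unitality since it will follow at once from the strict positivity claim applied to any strictly positive $k \in \End_B^0(E)$ (which exists because $E$ is countably generated, so $\End_B^0(E)$ is $\sigma$-unital). For the first assertion, that $J_F$ is a $C^*$-algebra, I would invoke \cref{lem:C_F} to identify $C_F$ with the closed $*$-subalgebra $C_0(\spec(F)\setminus\{\pm 1\})$ of $\End_B^*(E)$, and then use the standard fact that the sum of a closed two-sided ideal (here $\End_B^0(E)$) and a $C^*$-subalgebra (here $C_F$) is again a closed $C^*$-subalgebra.

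For the inclusions, the functional calculus gives $F C_F \subset C_F$ and $C_F F \subset C_F$ (multiplying a continuous function vanishing at $\pm 1$ by $x$ yields another such function), while $F \End_B^0(E), \End_B^0(E) F \subset \End_B^0(E)$ because the compacts form an ideal. For $A J_F \subset J_F$, the only nontrivial piece is $A\cdot C_F$. I would first show that $a\cdot f(1-F^2) \in \End_B^0(E)$ for every $a \in A$ and every continuous $f$ vanishing at $0$, by approximating $f$ uniformly on $\spec(1-F^2)$ by polynomials without constant term and invoking the Kasparov relation $a(1-F^2) \in \End_B^0(E)$. The remaining summand $a\cdot F\, C^*(1-F^2)$ then reduces to the previous case after writing $aF = \pm Fa + [F,a]$ (with sign $(-1)^{\deg a}$ when $a$ is homogeneous) and using $[F,a] \in \End_B^0(E)$. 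The inclusion $J_F A \subset J_F$ then follows by taking adjoints.

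The main effort goes into the strict positivity of $h := k + (1-F^2)$. My approach is via the short exact sequence $0 \to \End_B^0(E) \to J_F \to J_F/\End_B^0(E) \to 0$. The quotient is a quotient of the commutative $C^*$-algebra $C_F$, so identifies with $C_0(Y)$ for some closed $Y \subset \spec(F) \setminus \{\pm 1\}$; on $Y$ the function $1-x^2$ is strictly positive (never zero), so the image of $h$ is strictly positive in $J_F/\End_B^0(E)$. Simultaneously, \cref{lem:dense_range_cpts} applied to $h$ --- whose range in $E$ is dense by hypothesis --- yields $\overline{h\, \End_B^0(E)} = \End_B^0(E)$. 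A two-step approximation --- first lifting an approximant from the quotient to $J_F$, then correcting the resulting compact remainder using density of $h\, \End_B^0(E)$ in $\End_B^0(E)$ --- will then show that $h J_F$ is dense in $J_F$, whence $h$ is strictly positive. The chief obstacle is that the decomposition $J_F = \End_B^0(E) + C_F$ is not direct, which blocks a naive componentwise argument; the quotient/ideal framework together with \cref{lem:dense_range_cpts} is the device that circumvents this.
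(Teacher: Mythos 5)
Your proof is correct, and the only genuine divergence from the paper is in how the strict positivity of $h := k + (1-F^2)$ is organised. For $\sigma$-unitality and the $C^*$-algebra structure, the paper simply cites \cite[\S3, Lemma 2]{Kas80}; your alternative of deducing $\sigma$-unitality from the strict positivity claim (applied to a strictly positive $k\in\End_B^0(E)$, noting $k \le k + (1-F^2)$ and hence dense range by \cite[Corollary 10.2]{Lance95}) is equally valid and is closer in spirit to what the lemma is for. Your verification of the inclusions is the same reasoning as the paper's one-line justification, just spelled out; the polynomial approximation of $a\,f(1-F^2)$ and the graded-commutator rewriting of $aFc$ are exactly what make that line true. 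For strict positivity, the paper works directly with the (non-direct) decomposition: given $l+c$, it chooses $b\in C_F$ with $(1-F^2)b\approx c$ using strict positivity of $1-F^2$ in $C_F$ (\cref{lem:C_F}), and then applies \cref{lem:dense_range_cpts} to the \emph{adapted} compact remainder $l-kb$ rather than to $l$, so that $h(a+b)\approx l+c$. This adaptive choice is precisely how the paper neutralises the failure of directness that you correctly flag as the obstacle --- a quotient is not actually needed to get around it. Your route through $0\to\End_B^0(E)\to J_F\to J_F/\End_B^0(E)\to 0$, showing $q(h)$ strictly positive in the commutative quotient $C_0(Y)$ and then lifting an approximant and correcting the compact error, uses the same two ingredients (commutativity of $C_F$ away from $\pm1$, and dense range via \cref{lem:dense_range_cpts}) but in a more structural package. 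The paper's version is more elementary and self-contained; yours isolates a cleaner intermediate statement and would generalise more transparently to other ideal-plus-subalgebra situations.
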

\begin{proof}
As $E$ is countably generated, $\End_B^0(E)$ is a $\sigma$-unital $C^*$-algebra (see e.g.\ \cite[Proposition 6.7]{Lance95}). 
Since $\End_B^0(E)$ is an ideal in $\End_B^*(E)$, it follows from \cite[\S3, Lemma 2]{Kas80} that $J_F$ is also a $\sigma$-unital $C^*$-algebra. The inclusions $F J_{F}, J_{F}F \subset J_{F}$ are immediate, and the inclusions $A J_{F}, J_{F}A\subset J_{F}$ follow because $a(1-F^2)$ and $[F,a]$ are compact for all $a\in A$. 

Let $k\in\End_B^0(E)$ be a positive operator such that $h := k + (1-F^2)$ has dense range in $E$. 
Consider an element $l+c \in J_F$ where $l\in\End_B^0(E)$ and $c\in C_F$, and let $\varepsilon>0$. 
Since $1-F^2$ is strictly positive in $C_F$ by \cref{lem:C_F}, there exists $b\in C_F$ such that $\|(1-F^2)b-c\| < \varepsilon$. Moreover, since $l-kb$ is compact, we know from \cref{lem:dense_range_cpts} that there exists $a\in\End_B^0(E)$ such that $\|ha - (l-kb)\| < \varepsilon$. Hence 
\[
\big\| h (a+b) - (l+c) \big\| \leq \big\| ha - (l-kb) \big\| + \big\| (1-F^2) b - c \big\| < 2\varepsilon ,
\]
which proves that $h J_F$ is dense in $J_F$. 
\end{proof}

\subsection{The lifting construction}
\label{sec:lift}

Since our definition of unbounded cycle is more general than the usual definition of unbounded Kasparov module, it of course remains true that the bounded transform is surjective \cite{BJ83}. 
The way to prove this surjectivity is by showing that every bounded Kasparov module $(E,F)$ can be lifted to an (ordinary) unbounded Kasparov module $(E,\D)$ such that $F_\D$ is operator-homotopic to $F$.
Because we will make essential use of the technical subtleties of this lifting procedure in the sequel, we present the proof here, closely following the arguments of \cite{MR16,Kuc00}. 
Recall that all approximate units are assumed to be even, positive, increasing, and contractive for the $C^{*}$-algebra norm.
\begin{lem}[{cf.\ \cite[proof of Theorem 1.25]{MR16}}] 
\label{lem:lift}
Let $C$ be a commutative separable $C^{*}$-algebra, $\{c_{j}\}_{j\in\mathbb{N}}\subset C$ a total subset, and $\{u_{n}\}_{n\in\N}$ a countable commutative approximate unit for $C$. 
If for some $0<\varepsilon <1$, $d_{n}:=u_{n+1}-u_{n}$ satisfies 
$$ 
\|d_{n}c_{j}\|\leq \varepsilon^{2n} , \quad \forall j\leq n ,
$$ 
then the series $l^{-1}:=\sum \varepsilon^{-n}d_{n}$ defines an unbounded multiplier on $C$ such that $l:=(l^{-1})^{-1}\in C$ is strictly positive.
\end{lem}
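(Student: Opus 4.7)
Since $C$ is commutative and separable, Gelfand duality identifies $C \simeq C_0(X)$ for some second-countable locally compact Hausdorff space $X$, and I would argue pointwise throughout. In this picture, the hypothesis reads $d_n(x)\,|c_j(x)| \leq \|d_n c_j\| \leq \varepsilon^{2n}$ for every $x \in X$ and $j \leq n$. Without loss of generality I assume $u_0 = 0$, so that the telescoping identity gives $\sum_n d_n(x) = \lim_N u_N(x) = 1$ for every $x \in X$: here $u_n(x) \to 1$ follows from $u_n c_j \to c_j$ at any $x$ with $c_j(x) \neq 0$, and totality of $\{c_j\}$ ensures this holds for every $x$.

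The first task is to show $l^{-1}(x) := \sum_n \varepsilon^{-n} d_n(x)$ is a finite, continuous, strictly positive function $X \to (0,\infty)$. For $x \in X$ choose $j$ with $c_j(x) \neq 0$; the hypothesis yields $\varepsilon^{-n} d_n(x) \leq \varepsilon^n/|c_j(x)|$ for $n \geq j$, so the tail is dominated by a geometric series. Covering any compact $K \subset X$ by finitely many open sets $\{|c_{j_i}|>\delta_i\}$ forces uniform convergence on $K$, hence continuity of $l^{-1}$. Positivity follows because $u_n(x) \to 1$ forces some $d_n(x) > 0$.

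Next I show $l := 1/l^{-1} \in C_0(X)$. The crucial estimate is: for any $N \in \N$ and $\delta \in (0,1)$, on the complement of the compact set $K_{N,\delta} := \{u_N \geq \delta\}$ one has $\sum_{n<N} d_n(x) = u_N(x) < \delta$, so $\sum_{n \geq N} d_n(x) \geq 1-\delta$, giving
\[
l^{-1}(x) \;\geq\; \sum_{n \geq N} \varepsilon^{-n} d_n(x) \;\geq\; \varepsilon^{-N}(1-\delta) \quad\text{for } x \notin K_{N,\delta}.
\]
Since $\varepsilon^{-N} \to \infty$, this shows $l^{-1}(x) \to \infty$ as $x$ leaves every compact subset of $X$, whence $l \in C_0(X)$; and $l$ is strictly positive in $C$ as $l(x) > 0$ for every $x \in X$.

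Finally, to identify the unbounded multiplier, I note that for each $c_j$ the series $\sum_n \varepsilon^{-n} d_n c_j$ converges absolutely in the $C^*$-norm of $C$ (its tail from $n = j$ is dominated by $\sum_{n \geq j} \varepsilon^n$), with limit equal to $l^{-1}\cdot c_j \in C$. By linearity and density of $\operatorname{span}\{c_j\}$ in $C$, this extends to a densely defined positive operator on $C$ whose closure is the unbounded multiplier given by multiplication by the continuous function $l^{-1}: X \to (0,\infty)$, whose inverse is multiplication by $l \in C$. The main obstacle is the growth estimate in the third paragraph, which crucially exploits that each $u_N \in C_0(X)$ has precompact super-level sets.
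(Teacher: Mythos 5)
Your proof is correct and follows essentially the same route as the paper: both pass to the Gelfand picture $C \simeq C_0(Y)$ and derive the key lower bound $l^{-1}(y) \geq \varepsilon^{-N}(1-\delta)$ off the compact superlevel sets $\{u_N \geq \delta\}$ to show the reciprocal lies in $C_0(Y)$ and is strictly positive. You supply slightly more detail than the paper does (uniform convergence on compacta for continuity, the explicit WLOG $u_0=0$, and the identification of the closure of the multiplier), but the core estimate and the structure of the argument coincide.
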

\begin{proof}
The series $l^{-1}c_{j}:=\sum_{n} \varepsilon^{-n}d_{n}c_{j}$ is convergent for all $j$ by our assumption that $\|d_n c_j\|\leq \varepsilon^{2n}$ for all $n\geq j$, so $l^{-1}$ is a densely defined unbounded multiplier. 
The partial sums $\sum_{n=0}^{k}\varepsilon^{-n}d_{n}$ are elements in the commutative $C^{*}$-algebra $C\simeq C_{0}(Y)$, where $Y=\textnormal{Spec } C$. Under this isomorphism, the approximate unit $u_{n}$ is identified with a sequence of functions converging pointwise to $1$. For fixed $t\in (0,1)$ set
\[Y_{k}:=\{y\in Y: u_{k}(y)\geq t\},\]
which gives an increasing sequence of compact sets $Y_{k}\subset Y_{k+1}$ with $\bigcup_{k=0}^{\infty}Y_{k}=Y$. Let $y\in Y\setminus Y_{k}$ and $m\geq k$.
We have the estimates 
\begin{align*}
\sum_{n=0}^{\infty}\varepsilon^{-n}d_{n}(y)&\geq \sum_{n=k}^{n=m}\varepsilon^{-n}d_{n}(y)+\sum_{n=m+1}^{\infty}\varepsilon^{-n}d_{n}(y)\\
&\geq \varepsilon^{-k}(u_{m+1}-u_{k})(y)+\sum_{n=m+1}^{\infty}\varepsilon^{-n}d_{n}(y)\\
&\geq \varepsilon^{-k}(u_{m+1}(y)-t)+\sum_{n=m+1}^{\infty}\varepsilon^{-n}d_{n}(y)\to \varepsilon^{-k}(1-t) , 
\end{align*}
as $m\to \infty$. This shows that $l^{-1}$ is given by a function whose reciprocal is a strictly positive function in $C_{0}(Y)$, so this defines a strictly positive element $l\in C$. 
\end{proof}

\begin{prop} 
\label{prop:lift}
Let $(E,F)$ be a bounded Kasparov $A$-$B$-module satisfying $F^{*}=F$ and $F^{2}\leq 1$. 
Given a countable dense subset $\A\subset A$, there exists a positive operator $l\in J_{F}$ with dense range in $E$ such that 
\begin{enumerate}
\item the (closure of the) operator $\D:=\frac{1}{2}(Fl^{-1}+l^{-1}F)$ makes $(E,\D)$ into an ordinary unbounded Kasparov $A$-$B$-module with $\A\subset \Lip^{0}(\D)$; 
\item $F$ and $F_{\D}$ are operator-homotopic. 
\end{enumerate}
Moreover, if $F^{2}=1$, we can ensure that $l$ commutes with $F$ and that $(1+\D^2)^{-\frac12}$ is compact. 
\end{prop}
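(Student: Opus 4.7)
The plan is to carry out the refined Baaj-Julg lifting construction, following \cite{MR16,Kuc00} and using the algebras $C_F$ and $J_F$ introduced in \cref{sec:C_J}. By \cref{lem:J_F}, $J_F$ is $\sigma$-unital, so admits an even, positive, increasing, contractive countable approximate unit. Enumerating a countable set $\{T_j\}_{j\in\N}$ that contains the given dense subset $\A$, the element $F$, and the commutators $\{[F,a]:a\in\A\}$, a standard quasi-central refinement (of Kasparov technical theorem type) produces such an approximate unit $\{u_n\}_{n\in\N}$ in $J_F$ that is (i) contained in a commutative separable $C^*$-subalgebra $C\subset J_F$ also containing $1-F^2$, and (ii) rapidly quasi-central in the sense that for a fixed $\varepsilon\in(0,1)$ the telescoping differences $d_n:=u_{n+1}-u_n$ satisfy
\[
\|d_n T_j\|+\|[d_n,T_j]\|\le\varepsilon^{2n},\qquad j\le n.
\]
\cref{lem:lift} applied to $C$ then produces a strictly positive $l\in C\subset J_F$ whose inverse is the densely defined unbounded multiplier $l^{-1}=\sum_n\varepsilon^{-n}d_n$. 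Strict positivity of $l$, combined with the non-degenerate action of $\End_B^0(E)\subset J_F$ on $E$, shows that $l$ has dense range in $E$.

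For (1), set $\D:=\tfrac12(Fl^{-1}+l^{-1}F)$ on $\Dom(l^{-1})$. The commutator $[l^{-1},F]=\sum_n\varepsilon^{-n}[d_n,F]$ converges in operator norm by (ii), so $\D$ differs from $l^{-1}F$ by a bounded symmetric operator; the self-adjointness and regularity of the closure of $\D$ then follow by the standard resolvent/core arguments of \cite[Theorem 1.25]{MR16} and \cite{Kuc00}, using that $l$ preserves $\Dom(l^{-1})$. For $a\in\A$, the identity
\[
2[\D,a]=[F,a]l^{-1}+l^{-1}[F,a]+F[l^{-1},a]+[l^{-1},a]F,
\]
together with the estimates $\|[d_n,a]\|,\|d_n[F,a]\|\le\varepsilon^{2n}$, makes every summand bounded, so $a\in\Lip(\D)$. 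Writing $a=(al)\cdot l^{-1}$, combining the boundedness of $l(1+\D^2)^{-1/2}$ with the approximation of $al\in J_F$ by compact operators modulo a small $C_F$-error yields $a(1+\D^2)^{-1/2}\in\End_B^0(E)$; hence $\A\subset\Lip^0(\D)$.

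For (2), the same construction in fact produces the additional property $a(F_\D-F)\in\End_B^0(E)$ for every $a\in A$: the series representation of $l^{-1}$ and the rapid-decay estimates give explicit compact control on $F_\D-F$ after cutting with $a$, as in \cite[Theorem 1.25]{MR16}. The straight-line path $F_t:=(1-t)F+tF_\D$ is then norm-continuous and self-adjoint, and each $(E,F_t)$ is a bounded Kasparov module: $[F_t,a]=(1-t)[F,a]+t[F_\D,a]$ is compact, and expanding $F_t^2-1=(F^2-1)+t(F(F_\D-F)+(F_\D-F)F)+t^2(F_\D-F)^2$ gives $a(F_t^2-1)\in\End_B^0(E)$ using compactness of both $a(F_\D-F)$ and $(F_\D-F)a^*=(a(F_\D-F))^*$. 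For (3), when $F^2=1$ we have $C_F=0$ and $J_F=\End_B^0(E)$, so $\{u_n\}$ may be chosen inside the still-$\sigma$-unital commutant $\{F\}'\cap\End_B^0(E)$; then $l$ commutes with $F$, $\D=Fl^{-1}$, $\D^2=l^{-2}$, and $(1+\D^2)^{-1/2}=l(1+l^2)^{-1/2}\in\End_B^0(E)$.

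The principal obstacle is the simultaneous construction of the approximate unit $\{u_n\}$ with all the required properties -- confinement to a commutative subalgebra of $J_F$ so that \cref{lem:lift} applies, quasi-centrality against the countable test set containing $\A$ and $F$, and the exponential-decay telescoping estimates -- together with deducing the local compactness of $F_\D-F$ needed for the operator-homotopy in (2). This is precisely the delicate interplay between the commutative $C_F$ part and the noncommutative $\End_B^0(E)$ part of $J_F$ that the refined techniques of \cite{Kuc00,MR16} are designed to handle.
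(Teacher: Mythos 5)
Your proposal follows essentially the same route as the paper: construct $l$ from a rapidly quasicentral commutative approximate unit for $J_F$ (the paper places it in $C^*(h)$ for a strictly positive $h\in J_F$ via \cite[Theorem 3.2]{AkPed}, which is the precise form of the ``Kasparov technical theorem type'' refinement you invoke), apply \cref{lem:lift}, verify the Lipschitz conditions from the decay estimates, and obtain the operator-homotopy from local compactness of $F_\D-F$ (which the paper packages by citing \cite[Theorem 4.7]{MR16} together with \cite[Proposition 17.2.7]{Blackadar98}, while you spell out the straight-line argument). One small misstatement to flag: your combined condition $\|d_nT_j\|+\|[d_n,T_j]\|\le\varepsilon^{2n}$ cannot hold verbatim when $T_j=a\in\A$ or $T_j=F$, since these generally lie outside $J_F$ so the first term $\|d_nT_j\|$ does not decay (only the commutator does); your subsequent computations correctly use only the achievable individual estimates $\|[d_n,a]\|$, $\|[d_n,F]\|$, and $\|d_nc\|$ for $c\in J_F$, so this is merely a typo in the stated hypotheses.
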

\begin{proof}
Pick an even strictly positive element $h\in J_{F}$. Since we have (cf.\ \cref{lem:J_F})
$$A J_{F},\, J_{F}A,\, F J_{F},\, J_{F}F \subset J_{F},$$ 
there exists by \cite[Theorem 3.2]{AkPed} an approximate unit $u_{n}\in C^{*}(h)$ for $J_F$ that is quasicentral for $A$ and $F$.
Let $\{a_i\}_{i\in\mathbb{N}}$ be an enumeration of $\A$, choose a countable dense subset $\{c_i\}_{i\in\N} \subset C^*(h)$, and fix a choice of $0<\varepsilon<1$. 
By selecting a suitable subsequence of $u_{n}$, we can furthermore achieve that, for each $n\in\N$, $d_{n}:=u_{n+1}-u_{n}$ satisfies
\begin{enumerate}[(a)]
\item $\|d_{n}c_{i}\|\leq \varepsilon^{2n}$ for all $i\leq n$;
\item $\|d_{n}(1-F^{2})^{\frac{1}{4}}\| \leq \varepsilon^{2n}$;
\item $\|d_{n}[F,a_{i}]\|\leq \varepsilon^{2n}$ for all $i\leq n$;
\item $\|[d_{n},a_{i}]\|\leq \varepsilon^{2n}$ for all $i\leq n$;
\item $\|[d_{n},F]\|\leq \varepsilon^{2n}$.
\end{enumerate}
Here properties (a)-(c) follow because $u_n$ is an approximate unit for $J_F$ (and $c_i$, $(1-F^{2})^{\frac{1}{4}}$, and $[F,a_{i}]$ all lie in $J_F$), and properties (d)-(e) follow because $u_n$ is quasicentral for $A$ and $F$. 
By property (a) and \cref{lem:lift} we obtain a strictly positive element $l\in C^{*}(h)$ such that $l^{-1}=\sum \varepsilon^{-n}d_{n}$. 
Since $lJ_{F}\supset lC^{*}(h)J_{F}$, $lC^*(h)$ is dense in $C^*(h)$, and $C^*(h) J_F$ is dense in $J_{F}$, it follows that $lJ_F$ is dense in $J_F$ and therefore $l$ is strictly positive in $J_F$. In particular, $l$ has dense range in $E$. 
From properties (b)-(e) it follows that $ l \in C^{*}(h)\subset J_{F}$ satisfies \cite[Definition 4.6]{MR16}. 
Then by \cite[Theorem 4.7]{MR16} and \cite[Lemma 2.2]{Kuc00} the (closure of the) operator 
$$\mathcal{D} := \frac12(F  l^{-1} +  l^{-1} F)$$ is a densely defined and regular self-adjoint operator on $E$, and $(E,\mathcal{D})$ is an ordinary unbounded Kasparov $A$-$B$-module with $\A\subset\Lip^0(\D)$. Furthermore, the proof of \cite[Theorem 4.7]{MR16} (combined with \cite[Proposition 17.2.7]{Blackadar98}) shows that $F_{\D}$ is operator-homotopic to $F$.

For the final statement, suppose $F^{2}=1$, so that $J_F = \End_B^0(E)$. For any positive element $k\in J_{F}$ with dense range, we can consider $h:=k+FkF\geq k$, which is also positive with dense range (see \cite[Corollary 10.2]{Lance95}). 
Then $h$ is a strictly positive element in $\End_B^0(E)$ (see \cref{lem:dense_range_cpts}), and $h$ commutes with $F$. We then proceed as above (conditions (b) and (e) now being redundant) to construct a compact operator $l\in C^{*}(h)$ which also commutes with $F$. Lastly, for $\D = Fl^{-1}$ we see that $(1+\D^2)^{-\frac12} = l (1+l^2)^{-1}$ is indeed compact. 
\end{proof}

\cref{prop:lift} immediately implies the surjectivity of the bounded transform: 
\begin{thm}[\cite{BJ83, Kuc00}, cf.\ {\cite[Theorem 17.11.4]{Blackadar98}}]
\label{thm:bdd_transform_surj}
If $A$ is separable, then the bounded transform gives a surjective map $\overline{\Psi}_1(A,B) \to \KK(A,B)$.
\end{thm}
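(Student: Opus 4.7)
The plan is to deduce surjectivity directly from \cref{prop:lift} together with the standard normalisation of bounded Kasparov modules. Given a class $[x]\in\KK(A,B)$, I first want to represent it by a bounded Kasparov module $(\pi,E,F)$ satisfying the technical conditions $F=F^*$ and $F^2\le 1$ required as input for \cref{prop:lift}. This is the usual normalisation step: replacing $F$ by $\tfrac12(F+F^*)$ and then by $F(1+F^2)^{-1/2}$ (or more precisely, an operator-homotopic rescaling thereof) produces a Kasparov module in the same $\KK$-class with the required self-adjointness and contractivity. I would simply invoke the standard reference \cite[Proposition 17.4.3]{Blackadar98} to get this without redoing the calculation.

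Next, since $A$ is separable I can choose a countable dense $*$-subalgebra $\A\subset A$ and apply \cref{prop:lift} to $(\pi,E,F)$. This produces an ordinary unbounded Kasparov module $(\pi,E,\D)$ with $\pi(\A)\subset\Lip^0(\D)$ such that the bounded transform $F_\D$ is operator-homotopic to $F$. In particular, by closing in norm, $\pi(A)=\overline{\pi(\A)}\subset\overline{\Lip^0(\D)}$, so $(\pi,E,\D)\in\overline{\Psi}_1(A,B)$ is a genuine unbounded $A$-$B$-cycle in the sense of \cref{def: Kasmod} (cf.\ \cref{rmk: Kasmod_ordinary}).

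Finally, because operator-homotopy of bounded Kasparov modules is a special case of homotopy, the operator-homotopy between $F_\D$ and $F$ gives the identity $[(\pi,E,F_\D)]=[(\pi,E,F)]=[x]$ in $\KK(A,B)$. By \cref{prop:bdd_transform} the bounded transform of $(\pi,E,\D)$ is indeed $(\pi,E,F_\D)$, so the class $[x]$ lies in the image of the bounded transform. Since $[x]$ was arbitrary, surjectivity follows.

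There is essentially no obstacle here beyond correctly marshalling \cref{prop:lift}: the only non-trivial point is verifying that the normalisation step does not change the $\KK$-class, and that the operator-homotopy output by \cref{prop:lift} genuinely yields an equality in $\KK(A,B)$. Both are well-documented in the literature, so the proof reduces to a short assembly of previously established results.
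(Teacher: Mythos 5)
Your proof is correct and takes essentially the same route as the paper, which simply states that \cref{prop:lift} immediately implies surjectivity; you have spelled out the same normalisation-then-lift argument in full.
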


\subsection{Regular self-adjoint operators}
\label{sec:reg_sa}

Let $\D$ be a regular self-adjoint operator on a Hilbert $B$-module $E$. We recall from \cite[Theorem 10.9]{Lance95} that there exists a continuous functional calculus for $\D$, i.e.\ a $*$-homomorphism $f\mapsto f(\D)$ from $C(\R)$ to the regular operators on $E$, such that $\id(\D) = \D$ and $b(\D) = F_\D := \D(1+\D^2)^{-\frac12}$ (where $b(x) = x(1+x^2)^{-\frac12}$). 
In particular, if $f\in C(\R)$ is real-valued, then $f(\D)$ is regular self-adjoint. 

If the operators $a (\D\pm i)^{-1}$ are compact for some $a\in\End_B^*(E)$, we note that also $a g(\D)$ is compact for any $g\in C_0(\R)$ (since the functions $x\mapsto(x\pm i)^{-1}$ generate $C_0(\R)$). In particular, if $f\in C(\R)$ is a real-valued function such that $\lim_{x\to\pm\infty} |f(x)| = \infty$, then $a(f(\D)\pm i)^{-1}$ and $a (1+f(\D)^2)^{-\frac12}$ are compact. 

For completeness, we will show that the continuous functional calculus is compatible with $\Z_2$-gradings. 

\begin{lem}
\label{lem:odd_funct_calc}
Let $\D$ be an odd regular self-adjoint operator on a $\Z_2$-graded Hilbert $B$-module $E$.  If $f\in C(\R)$ is an odd real-valued function, then the regular self-adjoint operator $f(\D)$ is also odd.
\end{lem}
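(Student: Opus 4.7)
The plan is to exploit the naturality of the continuous functional calculus under conjugation by the grading operator. Let $\gamma\in\End_B^*(E)$ be the self-adjoint unitary implementing the $\Z_2$-grading on $E$, so $\gamma^2 = 1$. Oddness of $\D$ means that $\gamma(\Dom\D)\subseteq\Dom\D$ and $\gamma\D\gamma = -\D$ on $\Dom\D$.

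First I would show that conjugation by $\gamma$ intertwines the functional calculi of $\D$ and $-\D$. From $\gamma\D\gamma = -\D$ one obtains $\gamma(\D\pm i)\gamma = -(\D\mp i)$, and since $\D\pm i$ have dense range by regularity of $\D$, taking inverses yields $\gamma(\D\pm i)^{-1}\gamma = -(\D\mp i)^{-1}$. Because the functions $x\mapsto (x\pm i)^{-1}$ generate $C_0(\R)$ (Stone--Weierstrass) and $g\mapsto g(\D)$ is a $*$-homomorphism $C_0(\R)\to\End_B^*(E)$, it follows that $\gamma g(\D)\gamma = g(-\D)$ for every $g\in C_0(\R)$, where the right-hand side refers to the functional calculus of $-\D$ (or equivalently to the composition $g\circ(-\id)$ applied to $\D$).

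Next I would extend this to arbitrary $f\in C(\R)$ via the standard definition of the (possibly unbounded) functional calculus, as recorded in Lance \cite[Ch.~10]{Lance95}. For real-valued $f$, the regular self-adjoint operator $f(\D)$ is characterised by its resolvents $(f(\D)\pm i)^{-1} = g_\pm(\D)$ with $g_\pm(x) = (f(x)\pm i)^{-1}\in C_0(\R)$. Applying the previous step to $g_\pm$ gives
\[
\gamma(f(\D)\pm i)^{-1}\gamma = g_\pm(-\D) = (f(-\D)\pm i)^{-1},
\]
from which $\gamma(\Dom f(\D))\subseteq\Dom f(-\D)$ and $\gamma f(\D)\gamma = f(-\D)$ as regular self-adjoint operators.

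Finally, if $f$ is odd then $f(-x) = -f(x)$ for all $x\in\R$, and the composition rule for the functional calculus immediately gives $f(-\D) = -f(\D)$. Combining this with the previous identity, $\gamma f(\D)\gamma = -f(\D)$, so $f(\D)$ is odd. The only place requiring care is keeping track of domains when manipulating the unbounded operator $f(\D)$; working consistently through the resolvents $g_\pm(\D)\in\End_B^*(E)$, rather than with $f(\D)$ directly, circumvents this issue, and is the main technical point to verify.
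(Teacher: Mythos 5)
Your proof is correct, and it departs from the paper's argument in an interesting way. Both proofs start from the same observation: oddness of $\D$ is equivalent to the resolvent identity $\gamma(\D\pm i)^{-1}\gamma = -(\D\mp i)^{-1}$, and therefore conjugation by $\gamma$ implements the map $g(\D)\mapsto g(-\D)$ on all of $C_0(\R)$ (equivalently, $\gamma$ graded-commutes with $g(\D)$ when $C_0(\R)$ is graded by parity). Where the two proofs diverge is in the passage from bounded to unbounded $f$. The paper works directly with $f(\D)$ on the explicit core $\E = \{g(\D)\psi : g\in C_c(\R),\ \psi\in E\}$ from Lance [Lemma~10.8], decomposes $g$ into its even and odd parts $g_0+g_1$, and computes $\Gamma f(\D)g(\D) = -f(\D)\Gamma g(\D)$ by hand, afterwards appealing to the core to propagate the anti-commutation to all of $\Dom f(\D)$. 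You instead stay entirely at the level of resolvents: since $f(\D)$ is by definition the regular self-adjoint operator whose resolvents are $g_\pm(\D)$ with $g_\pm = (f\pm i)^{-1}\in C_0(\R)$, you apply the first step to $g_\pm$ to get $\gamma(f(\D)\pm i)^{-1}\gamma = (f(-\D)\pm i)^{-1}$, then combine $f\circ(-\id) = -f$ with the fact that a regular self-adjoint operator is determined by its resolvents. Your route avoids the domain manipulations on a core (the one technical point in the paper's argument) by pushing everything through bounded operators, which is cleaner; the trade-off is that you invoke the composition formula $g(-\D) = (g\circ(-\id))(\D)$ and the resolvent characterisation of $f(\D)$ more heavily. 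The one place I would make explicit is the step $\gamma(f(\D)\pm i)^{-1}\gamma = (\gamma f(\D)\gamma \pm i)^{-1}$, which identifies $\gamma f(\D)\gamma$ as a regular self-adjoint operator with the prescribed resolvents; once that is spelled out, the conclusion $\gamma f(\D)\gamma = -f(\D)$, and hence that $\gamma$ preserves $\Dom f(\D)$ and anti-commutes with $f(\D)$, follows immediately.
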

\begin{proof}
Let $\Gamma$ denote the $\Z_2$-grading operator on $E$, and let us grade $C_0(\R)$ by even and odd functions. 
As in the proof of \cite[Lemma 10.6.2]{Higson-Roe00}, the identity 
\[\Gamma (i\pm\D)^{-1} = (i\mp\D)^{-1} \Gamma,\]
shows that $\Gamma$ graded-commutes with $(i\pm\D)^{-1}$ and hence with any element in $C_0(\R)$. 
The linear subspace $\E := \{ g(\D)\psi \mvert g\in C_c(\R) , \, \psi\in E\}$ is a core for $f(\D)$ (cf.\ \cite[Lemma 10.8]{Lance95}). 
Each $g\in C_c(\R)$ is the sum of an even function $g_0\in C_c(\R)$ and an odd function $g_1\in C_c(\R)$. 
Then we have $\Gamma\E \subset \E$. 
Moreover, since $fg_0\in C_c(\R)$ is odd and $fg_1\in C_c(\R)$ is even, we find that 
\[
\Gamma f(\D) g(\D) = - f(\D) g_0(\D) \Gamma + f(\D) g_1(\D) \Gamma = - f(\D) \Gamma g(\D) .
\]
Thus $[f(\D),\Gamma]_+ = 0$ on the core $\E$, and it follows that in fact $\Gamma$ preserves $\Dom f(\D)$ and $f(\D)$ anti-commutes with $\Gamma$. 
\end{proof}

\begin{lem}
\label{lem:fam_reg_sa}
Let $X$ be a locally compact Hausdorff space and $Y\subset X$ an open subset. 
Let $\{\D_y\}_{y\in Y}$ be a family of regular self-adjoint operators on a Hilbert $B$-module $E$, and assume there exists a dense submodule $\E\subset E$ which is a core for $\D_y$ for each $y\in Y$, such that for each $\psi\in\E$ the map $Y\to E$, $y\mapsto \D_y\psi$ is continuous. 
Then the operator $\til\D$ on the Hilbert $C_0(X,B)$-module $C_0(Y,E)$ defined by 
\begin{align*}
\Dom\til\D &:= \big\{ \psi\in C_0(Y,E) : \psi(y)\in\Dom\D_{y} , \; \til\D\psi\in C_0(Y,E) \big\} , & 
(\til\D\psi)(y) &:= \D_y\psi(y) .
\end{align*}
is regular and self-adjoint. 
\end{lem}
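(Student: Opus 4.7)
The plan is to verify the three conditions that make $\til\D$ a regular self-adjoint operator on the Hilbert $C_0(X,B)$-module $C_0(Y,E)$: dense domain, symmetry, and dense range (equivalently surjectivity) of $\til\D\pm i$. Density of the domain is the easy part: for any $f\in C_c(Y)$ and $\psi\in\E$, the function $y\mapsto f(y)\psi$ lies in $\Dom\til\D$, since $\psi\in\Dom\D_y$ for each $y$ and $y\mapsto f(y)\D_y\psi$ is a continuous, compactly supported $E$-valued function on $Y$ by the hypothesis on $\E$. The linear span of such elements is dense in $C_0(Y,E)$. Symmetry is immediate from pointwise symmetry of the $\D_y$, and closedness follows because convergence in $C_0(Y,E)$ is uniform in $y$: if $\psi_n\to\psi$ and $\til\D\psi_n\to\phi$, then for each $y$ we have $\psi_n(y)\to\psi(y)$ and $\D_y\psi_n(y)\to\phi(y)$, so closedness of $\D_y$ gives $\psi(y)\in\Dom\D_y$ with $\D_y\psi(y)=\phi(y)$.

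The main step, and the only real obstacle, is to establish that the resolvents $R_\pm(y) := (\D_y\pm i)^{-1}$ form a strongly continuous family of contractions on $E$. Fix $y_0\in Y$, $\phi\in E$, and $\varepsilon>0$. Since $\E$ is a core for $\D_{y_0}$, the range $(\D_{y_0}\pm i)\E$ is dense in $E$, so we can choose $\psi_0\in\E$ with $\|(\D_{y_0}\pm i)\psi_0 - \phi\|<\varepsilon$, which by contractivity of $R_\pm(y_0)$ yields $\|R_\pm(y_0)\phi-\psi_0\|<\varepsilon$. The continuity hypothesis $y\mapsto\D_y\psi_0$ ensures that for $y$ in a neighbourhood of $y_0$ we have $\|(\D_y\pm i)\psi_0 - (\D_{y_0}\pm i)\psi_0\|<\varepsilon$, hence $\|(\D_y\pm i)\psi_0 - \phi\|<2\varepsilon$ and therefore $\|R_\pm(y)\phi-\psi_0\|<2\varepsilon$. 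The triangle inequality gives $\|R_\pm(y)\phi - R_\pm(y_0)\phi\|<3\varepsilon$, proving strong continuity.

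With strong continuity of $R_\pm$ in hand, surjectivity of $\til\D\pm i$ reduces to bookkeeping. Given $\phi\in C_0(Y,E)$, define $\psi(y):=R_\pm(y)\phi(y)$. Continuity of $\psi$ follows from the estimate
\[
\|\psi(y)-\psi(y_0)\| \leq \|R_\pm(y)\|\cdot\|\phi(y)-\phi(y_0)\| + \|(R_\pm(y)-R_\pm(y_0))\phi(y_0)\|,
\]
combined with strong continuity and the uniform bound $\|R_\pm(y)\|\leq 1$. The pointwise estimate $\|\psi(y)\|\leq\|\phi(y)\|$ shows $\psi\in C_0(Y,E)$, and then $\til\D\psi = \phi\mp i\psi\in C_0(Y,E)$ shows $\psi\in\Dom\til\D$ with $(\til\D\pm i)\psi = \phi$. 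This verifies the last condition and completes the proof.
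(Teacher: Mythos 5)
Your proof is correct, but it takes a genuinely different route from the one in the paper. The paper's argument, after noting that $\til\E := C_c(Y)\otimes\E\subset\Dom\til\D$ is dense and that $\til\D$ is closed and symmetric, immediately invokes the localisation results of the Appendix: since $\E$ is a core, $(\D_y\pm i)\E$ is dense in $E$ for each $y\in Y$, and since the localisation of $C_0(Y,E)$ at $x\in X\setminus Y$ vanishes, the pointwise-density criterion (\cref{prop:dense_submodules}, cf.\ \cref{locallydenserange}) yields density of $(\til\D\pm i)(\til\E)$ in $C_0(Y,E)$. You instead isolate an observation the paper never states---that the core hypothesis together with the continuity of $y\mapsto\D_y\psi$ forces the resolvent family $y\mapsto(\D_y\pm i)^{-1}$ to be strongly continuous---and use it to construct an explicit preimage $\psi(y)=(\D_y\pm i)^{-1}\phi(y)$ for every $\phi\in C_0(Y,E)$, giving outright surjectivity of $\til\D\pm i$. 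This is more self-contained (no appeal to the Appendix), and it produces the bounded inverse $(\til\D\pm i)^{-1}$ on $C_0(Y,E)$ explicitly, which is slightly more information. The trade-off is the $3\varepsilon$ argument for strong continuity, which the paper's localisation shortcut sidesteps entirely.
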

\begin{proof}
Consider the algebraic tensor product $\til\E := C_c(Y)\otimes\E$. 
Since $y\mapsto \D_y\psi$ is continuous for each $\psi\in\E$, we note that $\til\E \subset \Dom\til\D$. 
In particular, since $\til\E$ is dense in $C_0(Y,E)$, we know that $\til\D$ is densely defined. Moreover, since $\D_y$ is closed on $\Dom\D_y$, it follows that also $\til\D$ is closed on $\Dom\til\D$. 
By assumption, the operators $\D_y\pm i:\E\to E$ have dense range in $E$ for all $y\in Y$. 
Since $C_0(Y,E)\hot_{\ev_x}B=\{0\}$ for $x\notin Y$, it follows from \cref{locallydenserange} that the operators $\til{\D}\pm i\colon \til\E\to C_0(Y,E)$ have dense range in $C_0(Y,E)$, and therefore $\til\D$ is regular and self-adjoint. 
\end{proof}

\begin{remark}
We will apply the above lemma to construct operator-homotopies over $X = [0,1]$, and the two main cases of interest are $Y=X$ or $Y=(0,1]$. 
\end{remark}

\section{The unbounded homotopy relation}
\label{sec:unbdd_homotopy}

\subsection{The homotopy semigroup}
\label{sec:semigroup}

For any $t\in[0,1]$, we have the surjective $*$-homomorphism $\ev_t\colon C([0,1],B) \to B$ given by $\ev_t(b) := b(t)$. 
Given an unbounded Kasparov $A$-$C([0,1],B)$-cycle $(\pi,E,\D)$, we then define $\ev_t(\pi,E,\D)=(\pi_{t},E_{t},\D_{t}) := (\pi\hot1, E\hot_{\ev_t}B,\D\hot1)$. 

\begin{defn}
\label{defn:unbdd_KK_equiv}
Consider unbounded $A$-$B$-cycles $(\pi_0,E_0,\D_0)$ and $(\pi_1,E_1,\D_1)$. 
We introduce the following notions:
\begin{description}
\item[Unitary equivalence:] $(\pi_0,E_0,\D_0)$ and $(\pi_1,E_1,\D_1)$ are called \emph{unitarily equivalent} (denoted $(\pi_0,E_0,\D_0) \simeq (\pi_1,E_1,\D_1)$) if there exists an even unitary $U\colon E_0\to E_1$ such that $U\D_0 = \D_1 U$ and $U \pi_0(a) = \pi_1(a) U$ for all $a\in A$. 

\item[Homotopy:] 
A homotopy between $(\pi_0,E_0,\D_0)$ and $(\pi_1,E_1,\D_1)$ is given by an unbounded 
$A$-$C([0,1],B)$-cycle $(\til\pi,\til E,\til\D)$ such that $\ev_j(\til\pi,\til E,\til\D) \simeq (\pi_j,E_j,\D_j)$ for $j=0,1$. 

\item[Operator-homotopy:] 
A homotopy $(\til\pi,\til E,\til\D)$ is called an \emph{operator-homotopy} if there exists a Hilbert $B$-module $E$ with a representation $\pi\colon A\to\End_B^*(E)$ such that $\til E$ equals the Hilbert $C([0,1],B)$-module $C([0,1],E)$ with the natural representation $\til\pi$ of $A$ on $C([0,1],E)$ induced from $\pi$.
\end{description}
We denote by $\sim_{oh}$ the equivalence relation on $\overline{\Psi}_1(A,B)$ generated by operator-homotopies and unitary equivalences. 
The homotopy relation is denoted $\sim_h$. 
\end{defn}

\begin{remark}
If $(\pi,E,\D)$ is an unbounded  $A$-$B$-cycle such that $\pi(A) \subset \End_B^0(E)$ (i.e.\ $A$ is represented as compact operators), then $(E,\D)$ is operator-homotopic to $(\pi,E,0)$, via the operator-homotopy given by $\D_t = t\D$ for $t\in[0,1]$ (see also \cref{remark:Kasmod}.(2)). 
\end{remark}

We note that it was shown in \cite[Proposition 4.6]{Kaa19pre} that the homotopy relation is an equivalence relation on unbounded Kasparov modules. 
We will show next that the proof extends to our more general notion of unbounded cycles from \cref{def: Kasmod}, and for this purpose we recall some notation from \cite[\S4]{Kaa19pre}. 
Consider two unbounded $A$-$C([0,1],B)$-cycles $(\pi,E,\D)$ and $(\pi',E',\D')$, and a unitary isomorphism $U \colon E\hot_{\ev_1} B \to E'\hot_{\ev_0}B$ satisfying 
\begin{align*}
U \big( \pi(a)\hot_{\ev_1}1 \big) U^* &= \pi'(a)\hot_{\ev_0}1 , & 
U \big( \D\hot_{\ev_1}1 \big) U^* &= \D'\hot_{\ev_0}1 , 
\end{align*}
for any $a\in A$. 
For $t\in[0,1]$ we consider the localisations $E_t := E\hot_{\ev_t}B$, and for $e\in E$ we write $e_t := e\hot_{\ev_t}1 \in E_t$ (as in the Appendix). 
We define the \emph{concatenation}
\begin{align*}
E \times_U E' := \big\{ (e,e') \in E\oplus E' : Ue_1 = e'_0 \big\} .
\end{align*}
The space $E \times_U E'$ is endowed with the right action of $C([0,1],B)$ and the inner product described in \cite[\S4]{Kaa19pre}. 
We note that $\pi\oplus\pi'$ and $\D\oplus\D'$ are well-defined on $E \times_U E'$, and that $\D\oplus\D'$ is a regular self-adjoint operator (see the proof of \cite[Proposition 4.6]{Kaa19pre}). 
For two linear subspaces $W\subset\End_{C([0,1],B)}(E)$ and $W'\subset\End_{C([0,1],B)}(E')$, we write
\[
W \times_U W' := \big\{ (w,w')\in W\oplus W' : U(w\hot_{\ev_1}1)U^* = w'\hot_{\ev_0}1 \big\} .
\]
We note that we have the inclusion $\Lip(\D) \times_U \Lip(\D') \subset \Lip(\D\oplus\D')$. In fact, using \cite[Lemma 4.5]{Kaa19pre}, we obtain 
\[
\Lip^0(\D) \times_U \Lip^0(\D') \subset \Lip^0(\D\oplus\D') . 
\]

\begin{prop}[cf.\ {\cite[Proposition 4.6]{Kaa19pre}}]
The homotopy relation on unbounded $A$-$B$-cycles is an equivalence relation. 
\end{prop}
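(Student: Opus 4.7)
The three properties to establish are reflexivity, symmetry, and transitivity. Reflexivity is immediate: given $(\pi,E,\D)\in\overline{\Psi}_1(A,B)$, the constant family $(\pi\hot 1, C([0,1],E),\D\hot 1)$ is an unbounded $A$-$C([0,1],B)$-cycle by \cref{lem:fam_reg_sa} applied to the constant family $\D_t=\D$. The inclusion $\til\pi(A)\subset\overline{\Lip^0(\til\D)}$ follows because any $T\in\Lip^0(\D)$ extends to $T\hot 1\in\Lip^0(\til\D)$ with preserved operator norm, so the inclusion $\pi(A)\subset\overline{\Lip^0(\D)}$ lifts directly. Both endpoints are $(\pi,E,\D)$. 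Symmetry is similarly painless: the flip $*$-automorphism $\alpha\colon C([0,1],B)\to C([0,1],B)$, $\alpha(b)(t):=b(1-t)$, transforms any homotopy from $(\pi_0,E_0,\D_0)$ to $(\pi_1,E_1,\D_1)$ into one with the endpoints swapped, since $\ev_t\circ\alpha=\ev_{1-t}$.

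The substance is transitivity. Suppose $(\til\pi,\til E,\til\D)$ is a homotopy between $(\pi_0,E_0,\D_0)$ and $(\pi_1,E_1,\D_1)$, and $(\til\pi',\til E',\til\D')$ one between $(\pi_1,E_1,\D_1)$ and $(\pi_2,E_2,\D_2)$. Composing the unitary equivalences at the shared endpoint yields an even unitary
\[
U\colon \til E\hot_{\ev_1}B\;\xrightarrow{\simeq}\;\til E'\hot_{\ev_0}B
\]
intertwining $\til\pi\hot 1$ with $\til\pi'\hot 1$ and $\til\D\hot 1$ with $\til\D'\hot 1$. The concatenation $\til E\times_U \til E'$ with operator $\til\D\oplus\til\D'$ and representation $(\til\pi\oplus\til\pi')(a)=(\til\pi(a),\til\pi'(a))$ (well-defined thanks to the matching condition on $U$) is the candidate homotopy from $(\pi_0,E_0,\D_0)$ to $(\pi_2,E_2,\D_2)$. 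The regularity and self-adjointness of $\til\D\oplus\til\D'$ on $\til E\times_U\til E'$, together with the inclusion $\Lip^0(\til\D)\times_U\Lip^0(\til\D')\subset\Lip^0(\til\D\oplus\til\D')$, are the facts from \cite{Kaa19pre} already recorded in the excerpt, and cover everything except the relaxed closure condition of \cref{def: Kasmod}.

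Thus the crux—and the only point that goes beyond Kaad's argument for ordinary unbounded Kasparov modules—is to verify that $(\til\pi\oplus\til\pi')(A)\subset\overline{\Lip^0(\til\D\oplus\til\D')}$ as operators on $\til E\times_U\til E'$. For $a\in A$, pick sequences $T_n\in\Lip^0(\til\D)$ with $T_n\to\til\pi(a)$ and $T_n'\in\Lip^0(\til\D')$ with $T_n'\to\til\pi'(a)$ in operator norm over $C([0,1],B)$. These pairs need not satisfy the matching condition defining $\times_U$, so $(T_n,T_n')$ need not lie in $\Lip^0(\til\D)\times_U\Lip^0(\til\D')$. However, the defect
\[
\delta_n \;:=\; U\,\ev_1(T_n)\,U^* \;-\; \ev_0(T_n') \;\in\; \End_B^*\bigl(\til E'\hot_{\ev_0}B\bigr)
\]
tends to $\til\pi_1(a)-\til\pi_1(a)=0$ in norm. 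The plan is to correct $T_n'$ by adding a cut-off term of the form $\chi\cdot \delta_n$, where $\chi\in C([0,1])$ satisfies $\chi(0)=1$ and $\chi(1)=0$, with the defect $\delta_n$ realised as a constant-in-$t$ element of $\End^*_{C([0,1],B)}(\til E')$; this adjustment forces exact matching at the endpoint while introducing only a perturbation of norm $\|\chi\|\cdot\|\delta_n\|\to 0$, so the corrected sequence still converges to $\til\pi'(a)$.

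The main obstacle is ensuring that this correction lands inside $\Lip^0(\til\D')$: constant-in-$t$ bounded operators on $\til E'$ are not generally Lipschitz for $\til\D'$, so one cannot just plug in $\delta_n$ itself. The fix is to replace $\delta_n$ by a Lipschitz approximant obtained from products of $\Lip^0$ elements (using \cref{lem:separable_Lip}-style reasoning applied to the finite sum $\ev_0(T_n')+\delta_n$, which lies in the $C^*$-algebra closure containing $\ev_0(T_n')$ and $U\ev_1(T_n)U^*$) and to interpolate it into $\til E'$ via the localisation results from the appendix, so that the result lies in $\Lip^0(\til\D')$ with uniform control of norms. Once this Lipschitz approximant $\eta_n$ with $\|\eta_n\|\to 0$ and $\ev_0(\eta_n)=\delta_n$ is in hand, the pair $(T_n,\,T_n'+\chi\eta_n)$ lies in $\Lip^0(\til\D)\times_U\Lip^0(\til\D')\subset\Lip^0(\til\D\oplus\til\D')$ and converges to $(\til\pi(a),\til\pi'(a))$, completing the verification and hence the proof of transitivity.
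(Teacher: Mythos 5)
Your treatment of reflexivity and symmetry is fine and matches the paper. For transitivity you set up the concatenation correctly and, importantly, you correctly isolate the one nontrivial point: showing that $(\til\pi\oplus\til\pi')(A)$ lies in the closure of $\Lip^0(\til\D)\times_U\Lip^0(\til\D')$, rather than merely having coordinate-wise approximants that fail the matching condition. Your idea of correcting $T_n'$ by a cut-off defect term $\chi\delta_n$, with $\delta_n := U\ev_1(T_n)U^* - \ev_0(T_n')\to 0$, is also the right shape of argument.

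The gap is in the sentence where you claim the fix: you want an element $\eta_n\in\Lip^0(\til\D')$ with $\ev_0(\eta_n)=\delta_n$ and $\|\eta_n\|\to 0$, obtained by ``interpolating via the localisation results from the appendix.'' No such interpolation is available in general. \Cref{prop:dense_submodules} and \cref{locallydenserange} are density statements about submodules and ranges of operators; they do not let you lift a bounded (let alone Lipschitz) operator on the single fibre $E'_0$ to a Lipschitz family of operators over an interval, and indeed there is no reason such a lift should exist: the operators $\D'_t$ can vary wildly with $t$, so an operator that is Lipschitz for $\D'_0$ need not extend to anything Lipschitz for $\til\D'$ on any neighbourhood of $0$. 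Your observation that $\ev_0(T_n')+\delta_n=U\ev_1(T_n)U^*$ is Lipschitz at the single fibre $t=0$ is correct but does not address the extension problem.

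What is missing is the preliminary normalisation step that the paper carries out: one first replaces $(\pi',\til E',\til\D')$ by a homotopy which is \emph{constant near the endpoint} $t=0$, namely $C([0,1],E'_0)\times_{\Id}\til E'$ with the constant extensions of $\pi'_0$ and $\D'_0$ on the first factor, reparametrised so that the result is constant on $[0,\tfrac12]$. Once this is done, the constant extension of $U\ev_1(T_n)U^*$ over the constant region is automatically bounded and Lipschitz there (since $\til\D'$ is constant there too), and multiplying by a scalar function $\chi\in C^\infty([0,1])$ with $\chi(0)=1$ and $\chi\equiv 0$ on $[\tfrac12,1]$ produces a genuine element of $\Lip^0(\til\D')$ with the prescribed value at $t=0$ and norm $\leq\|\delta_n\|$. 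After that normalisation your correction argument goes through verbatim. So the route you propose is essentially correct, but without that preliminary reduction the key step is unsupported and, as stated, would fail.
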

\begin{proof}
Reflexivity and symmetry are proven exactly as in \cite[Proposition 4.6]{Kaa19pre}. For transitivity, we need to show that the concatenation of two unbounded $A$-$C([0,1],B)$-cycles is again an unbounded $A$-$C([0,1],B)$-cycle. 

We will first show that we may assume (without loss of generality) that any unbounded $A$-$C([0,1],B)$-cycle $(\pi,E,\D)$ is `constant near the endpoints'. 
We define 
\begin{align*}
\til E &:= C([0,1],E_0) \times_{\Id} E , &
\til\pi(a) &:= \pi_0(a) \oplus \pi(a) , & 
\til\D &:= \D_0\oplus\D . 
\end{align*}
Here $\pi_0(a)$ and $\D_0$ denote the obvious extension to $C([0,1],E_0)$ of the operators $\pi(a)\hot_{\ev_0}1$ and $\D\hot_{\ev_0}1$ on $E_0$, respectively. 
Now consider $\varepsilon>0$ and $a\in A$. 
Pick $S\in\Lip^0(\D)$ such that $\|\pi(a)-S\| < \varepsilon$. Then we also have $\|\pi_0(a)-S_0\| < \varepsilon$ and therefore $\|\til\pi(a)-S_0\oplus S\| < \varepsilon$. 
This proves that we have the inclusions 
\[
\til\pi(A) \subset \bar{\Lip^0(\D_0) \times_{\Id} \Lip^0(\D)} \subset \bar{\Lip^0(\til\D)} ,
\]
so $(\til\pi,\til E,\til\D)$ is an unbounded $A$-$C([0,1],B)$-cycle which is constant on $[0,\frac12]$. 

Now suppose we have two unbounded $A$-$C([0,1],B)$-cycles $(\pi,E,\D)$ and $(\pi',E',\D')$, and a unitary isomorphism $U \colon E\hot_{\ev_1} B \to E'\hot_{\ev_0}B$ satisfying 
\begin{align*}
U \big( \pi(a)\hot_{\ev_1}1 \big) U^* &= \pi'(a)\hot_{\ev_0}1 , & 
U \big( \D\hot_{\ev_1}1 \big) U^* &= \D'\hot_{\ev_0}1 , 
\end{align*}
for any $a\in A$. 
As described above, we may assume (without loss of generality) that $(\pi',E',\D')$ is constant on $[0,\frac12]$. 
We define
\begin{align*}
E'' &:= E \times_U E' , &
\pi''(a) &:= \pi(a) \oplus \pi'(a) , & 
\D'' &:= \D\oplus\D' . 
\end{align*}
Now consider $\varepsilon>0$ and $a\in A$. 
Pick $S\in\Lip^0(\D)$ such that $\|\pi(a)-S\| < \varepsilon$. Then in particular we have 
\[
\|\pi'(a)_0 - US_1U^*\| = \|\pi_1(a)-S_1\| < \varepsilon .
\]
Pick a function $\chi\in C^\infty([0,1])$ such that $0\leq\chi\leq1$, $\chi(0)=1$, and $\chi(t) = 0$ for all $\frac12\leq t\leq 1$. 
Since $E'$ is constant on $[0,\frac12]$, we note that $\chi US_1U^*$ is a well-defined adjointable operator on $E'$, which in fact lies in $\Lip^0(\D')$. 
If we also pick $R'\in\Lip^0(\D')$ such that $\|\pi'(a)-R'\| < \varepsilon$, then we obtain $T'' := S \oplus (\chi US_1U^*+(1-\chi) R') \in \Lip^0(\D) \times_U \Lip^0(\D')$ and we have the estimate 
\begin{align*}
\big\| \pi''(a) - T'' \big\| &\leq \max\big\{ \|\pi(a) - S\| , \|\pi'(a) - \chi US_1U^*+(1-\chi) R'\| \big\} \\
&\leq \max\big\{ \|\pi(a) - S\| , \sup_{t\in[0,1]} \big( \chi(t) \|\pi'_0(a)-US_1U^*\| + (1-\chi(t)) \|\pi'_t(a)-R'\| \big) \big\} \\
&< \varepsilon . 
\end{align*}
This proves that we have the inclusions 
\[
\pi''(A) \subset \bar{\Lip^0(\D) \times_U \Lip^0(\D')} \subset \bar{\Lip^0(\D'')} , 
\]
and we conclude that $(\pi'',E'',\D'')$ is again an unbounded $A$-$C([0,1],B)$-cycle. 
\end{proof}

\begin{defn}
We define $\overline{\UKK}(A,B)$ as the set of homotopy equivalence classes of unbounded $A$-$B$-cycles. 
\end{defn}
We recall from \cref{lem:semigroup} that the direct sum of two unbounded cycles is well-defined. 
Since the direct sum is also compatible with homotopies, we obtain a well-defined addition on $\overline{\UKK}(A,B)$ induced by the direct sum. 
Moreover, this addition is associative and commutative (since homotopy equivalence is weaker than unitary equivalence). 
Hence $\overline{\UKK}(A,B)$ is an \emph{abelian semigroup}, with the zero element given by the class of the zero cycle $(0,0)$.

\subsection{Functional dampening}
\label{sec:functional_dampening}

The goal of this subsection is to show that, up to operator-homotopy, we can replace an unbounded cycle $(E,\D)$ by $(E,f(\D))$ for suitable functions $f$ which blow up towards infinity at a sublinear rate.
One can think of $f(\D)$ as a `dampened' version of $\D$, and we refer to the transformation $\D \mapsto f(\D)$ as `functional dampening'. 
Our proof is partly inspired by the proof of \cite[Proposition 5.1]{Kaa19pre}, where the special case $f(x) := x(1+x^2)^{-r}$ (with $r\in(0,\frac12)$) is considered. 

\begin{defn}
A \emph{dampening function} is an odd continuous function $f\colon\R\to\R$ such that 
\begin{align*}
\lim_{x\to\infty} f(x) &= \infty , & 
\lim_{x\to\infty} f(x) (1+x^2)^{-\frac12} &= 0 . 
\end{align*}
\end{defn}

\begin{prop}
\label{prop:functional_dampening}
Consider an unbounded $A$-$B$-cycle $(E,\D)$ and a dampening function $f$. 
Assume that there exists a self-adjoint subset $W\subset\Lip^0(\D)\cap\Lip(f(\D))$ such that $\pi(A)\subset\overline{W}$. 
Then $(E,f(\D))$ is an unbounded $A$-$B$-cycle which is operator-homotopic to $(E,\D)$. 
\end{prop}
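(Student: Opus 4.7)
The plan is to build an explicit operator-homotopy from $(E,\D)$ to $(E,f(\D))$ via the convex combination
\[
\D_t := g_t(\D), \qquad g_t(x) := (1-t)x + tf(x), \quad t\in[0,1].
\]
As a preliminary step, I would verify that $(E,f(\D))$ is itself an unbounded cycle. Since $|f(x)|\to\infty$, the function $(1+f(x)^2)^{-1/2}$ lies in $C_0(\R)$. The assumption $T\in\Lip^0(\D)$ forces $Tg(\D)\in\End_B^0(E)$ for every $g\in C_0(\R)$ (as noted in \cref{sec:reg_sa}), so $T(1+f(\D)^2)^{-1/2}$ is compact; together with $W\subset\Lip(f(\D))$ and self-adjointness of $W$, this places $W$ inside $\Lip^0(f(\D))$, whence $\pi(A)\subset\overline{W}\subset\overline{\Lip^0(f(\D))}$.

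Next, I would assemble $\{\D_t\}_{t\in[0,1]}$ into a regular self-adjoint operator $\til\D$ on the Hilbert $C([0,1],B)$-module $C([0,1],E)$ by applying \cref{lem:fam_reg_sa}, taking as common core $\mathcal{E}' := \{h(\D)\phi : h\in C_c(\R),\,\phi\in E\}$ (which is a core for every $g_t(\D)$ by standard functional calculus). The continuity hypothesis $t\mapsto\D_t h(\D)\phi = (g_t h)(\D)\phi$ holds because $g_t h$ has fixed compact support and $g_t$ converges uniformly to $g_{t_0}$ on that support, so $g_t h\to g_{t_0} h$ in $C_0(\R)$. The representation $\til\pi$ is the constant extension of $\pi$, and evaluation at $t$ returns $(E,\D_t)$, giving $(E,\D)$ at $t=0$ and $(E,f(\D))$ at $t=1$.

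The core task is to show $\til\pi(A)\subset\overline{\Lip^0(\til\D)}$, for which by density it suffices that each constant family $T\in W$ lies in $\Lip^0(\til\D)$. On $\mathcal{E}'$ the commutator satisfies $[\D_t,T] = (1-t)[\D,T]+t[f(\D),T]$, which is a norm-continuous family of bounded operators by the hypothesis $W\subset\Lip(\D)\cap\Lip(f(\D))$ and hence extends to an adjointable operator on $C([0,1],E)$. For the compact resolvents, writing $\phi_t(x):=(1+g_t(x)^2)^{-1/2}$, I would reduce to showing that $t\mapsto\phi_t$ is norm-continuous into $C_0(\R)$. The key point is that since $f$ is odd with $|f(x)|\to\infty$, for $|x|$ sufficiently large the summands $(1-t)x$ and $tf(x)$ share signs, so $|g_t(x)| = (1-t)|x|+t|f(x)|$; splitting the parameter interval at $t=1/2$ then yields the $t$-uniform tail bound
\[
\phi_t(x) \leq \bigl(1 + \tfrac14\min(x^2,f(x)^2)\bigr)^{-1/2} \in C_0(\R).
\]
Combined with equicontinuity on compacta, this gives uniform convergence $\phi_t\to\phi_{t_0}$ in $C_0(\R)$, so $t\mapsto T\phi_t(\D)\in\End_B^0(E)$ is norm-continuous and defines a compact operator on $C([0,1],E)$. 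The main obstacle is precisely this uniform tail estimate: the function $g_t$ qualitatively degenerates at $t=1$ where the linear term vanishes, forcing one to split the parameter interval and to exploit $|f(x)|\to\infty$ rather than $|x|\to\infty$ for large-$t$ control.
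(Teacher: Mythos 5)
Your approach is correct but genuinely different from the paper's. The paper first reduces to the function $h(x) = x(1+x^2)^{-1/2}(1+|f(x)|)$ (a bounded perturbation of $f$), and then homotopes $h(\D) = \D g(\D)$ to $\D$ via the fractional-power interpolation $\D_t := \D g_t(\D)$ with $g_t(x) := ((1-t)^{1/2}+g(x))^t$; bounding the commutator $[\D_t,w]$ uniformly in $t$ then needs the integral formula \cref{eq:integral_formula_powers} and a delicate two-region splitting of that integral, and the resolvent continuity is handled by a separate argument. You instead take the straight-line interpolation $g_t(x) = (1-t)x + tf(x)$, for which the commutator identity $[\D_t,T] = (1-t)[\D,T] + t[f(\D),T]$ is immediate from $W\subset\Lip(\D)\cap\Lip(f(\D))$ and at once yields a uniformly bounded, norm-continuous family of adjointable operators. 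The burden then falls entirely on the compact-resolvent step, which you resolve by noting that oddness of $f$ together with $\lim_{x\to\infty}f(x)=\infty$ forces $(1-t)x$ and $tf(x)$ to agree in sign for $|x|$ large, giving $|g_t(x)| = (1-t)|x| + t|f(x)| \geq \tfrac12\min(|x|,|f(x)|)$ after splitting the parameter interval at $t=\tfrac12$; this supplies the $t$-uniform tail estimate driving norm-continuity of $t\mapsto(1+g_t(\cdot)^2)^{-1/2}$ in $C_0(\R)$. Your argument is shorter and more elementary, trading the paper's fractional-power machinery for a sign observation plus parameter split; the paper's construction, by keeping $\D_t=\D g_t(\D)$ with $g_t$ bounded below, avoids having to reason directly about the unbounded summand $tf(\D)$. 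One cosmetic imprecision: the displayed bound on $\phi_t(x)$ holds only once $|x|$ is large enough for the sign agreement to take effect, so it should be stated as a tail estimate rather than a global pointwise inequality — but this does not affect the argument.
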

\begin{proof}
By \cref{lem:odd_funct_calc}, $f(\D)$ is an odd regular self-adjoint operator on $E$. 
Since the function $x\mapsto(1+f(x)^2)^{-\frac12}$ lies in $C_0(\R)$, we find that \[\Lip^0(\D)\cap\Lip(f(\D)) \subset \Lip^0(f(\D)).\] 
Hence $(E,f(\D))$ is indeed an unbounded $A$-$B$-cycle. 
To see that it is operator-homotopic to $(E,\D)$, consider the functions $g(x) := (1+x^2)^{-\frac12} \big(1+|f(x)|\big)$ and $h(x) := x g(x)$. 
Then $g\in C_{0}(\mathbb{R})$ and since $f-h\in C_b(\R)$, we see that $h(\D)$ is a bounded perturbation of $f(\D)$ (in particular, $(E,h(\D))$ is operator-homotopic to $(E,f(\D))$). 

It remains to show that $(E,h(\D))$ is operator-homotopic to $(E,\D)$. 
We consider the operator-homotopy given for $t\in[0,1]$ by 
\begin{align*}
\D_t &:= \D g_t(\D) , & 
g_t(x) &:= \big( (1-t)^{\frac12} + g(x) \big)^t . 
\end{align*}
We note that $g_0(x) = 1$ and $g_1(x) = g(x)$. 
Since $g(x)$ is bounded from below by a positive constant for $|x|<r$, we see that the map $[0,1]\ni t \mapsto g_t(\cdot) \in C_b(\R)$ is uniformly continuous on compact subsets of $\R$, and therefore $t \mapsto g_t(\D)$ is strongly continuous (see e.g.\ \cite[Lemma 7.2]{KL12}). Consequently, $t \mapsto \D_t$ is strongly continuous on $\Dom\D$. 
Furthermore, for each $t\in[0,1]$, $\Dom\D$ is a core for $\D_t$, so from \cref{lem:fam_reg_sa} we obtain a regular self-adjoint operator $\til\D$ on $C([0,1],E)$. 

Consider a self-adjoint element $w\in W$. 
Let us fix $0<t<1$ and write 
\[Q_t(\D) := (1-t)^{\frac12} + g(\D),\] 
so that $g_t(\D) = Q_t(\D)^t$. We note that $Q_t(\D)\in\Lip(\D)$ and $[\D,Q_t(\D)]=0$, and we find that 
\[
\D [Q_t(\D),w] = \D[g(\D),w] = [h(\D),w] - [\D,w]g(\D)
\]
is bounded. 
Consider the integral formula (see the proof of \cite[Proposition 1.3.8]{Pedersen79})
\begin{align}
\label{eq:integral_formula_powers}
Q_t(\D)^t = \frac{\sin(\pi t)}{\pi} \int_0^\infty \lambda^{-t} (1+\lambda Q_t(\D))^{-1} Q_t(\D) d\lambda . 
\end{align}
Since $Q_t(\D)$ is bounded below by $(1-t)^{\frac12}$, we know that $Q_t(\D)$ is invertible, and that 
\begin{align}
\label{eq:norm_bound_res_Q}
\|(1+\lambda Q_t(\D))^{-1}\| \leq (1-t)^{-\frac12} \lambda^{-1} .
\end{align}
In particular, $(1+\lambda Q_t(\D))^{-1}$ is of order $\order(\lambda^{-1})$ as $\lambda\to\infty$. 
Using that $\Dom \D$ is a core for $Q_{t}(D)$ and $\D$ commutes with $Q_t(\D)$, we then compute 
\begin{multline*}
\left[ (1+\lambda Q_t(\D))^{-1} Q_t(\D) , w \right] \D \\
= (1+\lambda Q_t(\D))^{-1} [Q_t(\D),w] \D - \lambda (1+\lambda Q_t(\D))^{-1} [Q_t(\D),w] \D (1+\lambda Q_t(\D))^{-1} Q_t(\D) ,
\end{multline*}
and we see that $\big\| \left[ (1+\lambda Q_t(\D))^{-1} Q_t(\D) , w \right] \D \big\|$ is finite and of order $\order(\lambda^{0})$ for $\lambda\to 0$, and of order $\order(\lambda^{-1})$ as $\lambda\to\infty$. By applying the above integral formula, we obtain that 
\[
S_t := [g_t(\D),w] \D = [Q_t(\D)^t,w] \D = \frac{\sin(\pi t)}{\pi} \int_0^\infty \lambda^{-t} \big[ (1+\lambda Q_t(\D))^{-1} Q_t(\D) , w \big] \D d\lambda 
\]
is a norm-convergent integral. It follows that $S_t$ is a bounded operator. To show that $S_t$ is in fact uniformly bounded in $t$, let us split the integral in two parts. First, since $\|(1+\lambda T)^{-1}\| \leq 1$, we have 
\begin{align*}
\Big\| &\frac{\sin(\pi t)}{\pi} \int_0^1 \lambda^{-t} \big[ (1+\lambda Q_t(\D))^{-1} Q_t(\D) , w \big] \D d\lambda \Big\| \\
&\leq \frac{\sin(\pi t)}{\pi} \big\| [g(\D),w]\D \big\| \big(1+\|Q_t(\D)\|\big) \int_0^1 \lambda^{-t} d\lambda \\
&\leq \frac{\sin(\pi t)}{\pi} \big\| [g(\D),w]\D \big\| \big(2+\|g(\D)\|\big) (1-t)^{-1} .
\end{align*}
Second, using \cref{eq:norm_bound_res_Q} we estimate
\begin{align*}
\Big\| &\frac{\sin(\pi t)}{\pi} \int_1^\infty \lambda^{-t} \big[ (1+\lambda Q_t(\D))^{-1} Q_t(\D) , w \big] \D d\lambda \Big\| \\
&\leq \frac{\sin(\pi t)}{\pi} \big\| [g(\D),w]\D \big\| \big( (1-t)^{-\frac12} + (1-t)^{-1} \|Q_t(\D)\| \big) \int_1^\infty \lambda^{-t-1} d\lambda \\
&\leq \frac{\sin(\pi t)}{\pi} \big\| [g(\D),w]\D \big\| \big( 2 (1-t)^{-\frac12} + (1-t)^{-1} \|g(\D)\| \big) t^{-1} .
\end{align*}
Using that $\sin(\pi t) = \order(t)$ as $t\to0$ and $\sin(\pi t) = \order(1-t)$ as $t\to1$, we see that both integrals are uniformly bounded in $t$. Thus $S_t$ is uniformly bounded. It then suffices to check strict continuity on the dense submodule $\Dom\D$. Since $g_t(\D)$ is strongly continuous, we see that $S_t$ is strongly continuous on $\Dom\D$. Furthermore, rewriting 
\begin{align*}
\D [g_t(\D),w] &= \left[ \D g_t(\D) , w \right] - \left[ \D , w \right] g_t(\D) 
= \left[ g_t(\D) , w \right] \D + g_t(\D) \left[ \D , w \right] - \left[ \D , w \right] g_t(\D) , 
\end{align*}
we conclude that $S_t^* = - \D [g_t(\D),w]$ is also strongly continuous on $\Dom\D$. 
Thus we have shown that the commutator 
\[
[\D_t,w] = [\D,w] g_t(\D) + \D [ g_t(\D) , w ]
\]
is uniformly bounded and strictly continuous, and therefore $[\til\D,w]$ is bounded and adjointable on $C([0,1],E)$. 

Now consider the functions $R_t\in C_0(\R)$ given by $R_t(x) := (i\pm xg_t(x))^{-1}$. We claim that $t\mapsto R_t$ is continuous with respect to the supremum-norm on $C_0(\R)$. 
To prove this claim, first observe that $g_t(x) \geq g(x)^t \geq \min(1,g(x))$ for all $x\in\R$ and $t\in[0,1]$. 
Hence for each $\varepsilon>0$ there exists $r\in(0,\infty)$ such that for all $t\in[0,1]$ we have $\sup_{|x|>r} |R_t(x)| \leq \varepsilon$. 
Then for $t,s\in[0,1]$ we can estimate 
\begin{align*}
\| R_t - R_s \| &\leq 2\varepsilon + \sup_{|x|<r} \|R_t(x)-R_s(x)\| 
\leq 2\varepsilon + \sup_{|x|<r} \|xg_t(x)-xg_s(x)\| \\
&\leq 2\varepsilon + r \sup_{|x|<r} \|g_t(x)-g_s(x)\| .
\end{align*}
Since $g_t(x)$ is uniformly continuous for $|x|<r$, we see that $t\mapsto R_t$ is norm-continuous. Consequently, we conclude that $t\mapsto (i\pm \D_t)^{-1}$ is a norm-continuous map such that $w (i\pm \D_t)^{-1}$ is compact for each $w\in W$ and $t\in[0,1]$. 
Hence $w (\til\D\pm i)^{-1}$ is compact on $C([0,1],E)$. 
This completes the proof that $\D_t$ yields an operator-homotopy $\big(C([0,1],E),\til\D\big)$. 
\end{proof}
\begin{remark}
\label{remark:higher-order}
A \emph{higher order} unbounded Kasparov module is a pair $(E,\D)$ such that there exist $0<\varepsilon<1$ and a dense $*$-subalgebra $\A\subset A$ for which the operators $[\D,a](1+\D^{2})^{-\frac{1-\varepsilon}{2}}$ (for $a\in\A$) extend to bounded operators. The class of higher order Kasparov modules contains all \emph{ordinary} unbounded Kasparov modules. 
In \cite[Theorem 1.37]{GMR19pre} it was shown that the function $$\sgnlog(x) := \sgn(x) \log(1+|x|),$$ can be used to turn a higher order unbounded Kasparov module into an ordinary unbounded Kasparov module. 
In fact, the proof of \cite[Theorem 1.37]{GMR19pre} shows that for any unbounded cycle $(E,\D)$ (as in \cref{def: Kasmod}) we have the inclusion $\Lip(\D) \subset \Lip(\sgnlog(\D))$. It then follows from \cref{prop:functional_dampening} that any unbounded cycle $(E,\D)$ is operator-homotopic to $(E,\sgnlog(\D))$. 

Using the natural notion of homotopy for higher order modules, one can ask whether the transformation $(E,\D)\mapsto(E,\sgnlog(\D))$ can be implemented as an operator-homotopy within the class of higher order unbounded Kasparov modules, so that every higher order module would be operator-homotopic to an ordinary unbounded Kasparov module. 
It is not immediately clear if this is indeed the case. 
\end{remark}
\subsection{From bounded to unbounded homotopies}
\label{sec:lift_homotopy}

Recall the $*$-homomorphism $\ev_t\colon C([0,1],B) \to B$ given by $b\mapsto b(t)$. 
For a Hilbert $C([0,1],B)$-module $E$ we write $E_{t}:=E\hot_{\ev_{t}}B$ for the localisation of $E$ at $t\in[0,1]$.
Moreover, for any $h\in\End_{B}^*(E)$, we consider the localisation $h_t := h\hot1$ on $E_t$. 
We describe some basic facts regarding these localisations in the Appendix. 

Now consider two unbounded  $A$-$B$-cycles $(E_0,\D_0)$ and $(E_1,\D_1)$, and assume that their bounded transforms are homotopic. 
Thus there exists a homotopy $( E,F)$ between $(E_0,F_{\D_0})$ and $(E_1,F_{\D_1})$, where $E$ is a module over $C([0,1],B)$. 
For simplicity, let us assume that $\ev_j(E,F)$ is \emph{equal} to $(E,F_{\D_j})$ (i.e.\ there is no unitary equivalence involved). We are ready to derive our main technical result.

\begin{prop}
\label{prop:unbdd_lift}
Suppose $A$ is separable, and $B$ $\sigma$-unital. 
Consider two unbounded $A$-$B$-cycles $(E_0,\D_0)$ and $(E_1,\D_1)$, and let $(E,F)$ be a homotopy between $(E_0,F_{\D_0})$ and $(E_1,F_{\D_1})$, satisfying $F=F^{*}$ and $F^{2}\leq 1$.
Let $W_j\subset\Lip^0(\D_j)$ be countable subsets consisting of products of elements in $\Lip^0(\D_j)$, such that $A\subset \overline{W_{j}}$ (for $j=0,1$). 
Then there exists a positive operator $ l\in J_{F}\subset \End_{C([0,1],B)}^*(E)$ with dense range in $E$ such that
\begin{enumerate}
\item the (closure of the) operator $\mathcal{S} := \frac12(F l^{-1} + l^{-1}  F)$ makes $(E,\mathcal{S})$ into an unbounded $A$-$C([0,1],B)$-cycle; 
\item writing $l_{j} := \ev_j( l) $ and $\mathcal{S}_{j}:=\ev_{j}(\mathcal{S})$ (for $j=0,1$), we have 
\begin{align*}
l_{j} &\in C^{*}((1+\D_j^{2})^{-1}) , &
\mS_j &= F_{\D_j} l_j^{-1} , &
W_j &\subset \Lip(l_j^{-1}) \cap \Lip(\mathcal{S}_{j}) , 
\end{align*} 
and the operator $l_j^{-1} (1+\D_j^2)^{-\frac14}$ extends to an adjointable endomorphism. 
\end{enumerate}
\end{prop}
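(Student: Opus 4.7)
The plan is to adapt the Baaj--Julg construction of \cref{prop:lift} to the Hilbert $C([0,1],B)$-module $E$, with the essential modification that the auxiliary strictly positive element $h\in J_F$ must be chosen to have a prescribed form at the two endpoints. The guiding idea is that if the localizations $h_j$ are functions of $(1+\D_j^2)^{-1}$, then so is $l_j$, forcing $l_j$ to commute with $F_{\D_j}$ and thereby collapsing the symmetric expression $\frac12(F_{\D_j}l_j^{-1}+l_j^{-1}F_{\D_j})$ to $F_{\D_j}l_j^{-1}$. I would therefore take $h:=(1-F^2)+k$, where $k$ is a positive compact operator of the form $k=\chi\cdot k_0$ with $\chi\in C([0,1])$ vanishing at the endpoints and strictly positive on $(0,1)$, and with $k_0\in\End_{C([0,1],B)}^0(E)$ strictly positive. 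This guarantees $k_j=0$ and hence $h_j=(1+\D_j^2)^{-1}$, while a fibrewise density check together with \cref{lem:J_F} confirms that $h$ is strictly positive in $J_F$.

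With $h$ fixed, I would enumerate dense countable subsets $\{a_i\}\subset A$ and $\{c_i\}\subset C^*(h)$, along with the given countable sets $W_0$ and $W_1$. Applying the Akemann--Pedersen theorem I would construct an approximate unit $u_n=\phi_n(h)\in C^*(h)$ for $J_F$ which is quasicentral for $A$ and $F$, and then pass to a subsequence so that $d_n:=u_{n+1}-u_n$ satisfies the conditions (a)--(e) from the proof of \cref{prop:lift}, augmented by the \emph{localized} endpoint bounds
\[
\|[(d_n)_j,w]\|\leq\varepsilon^{2n}\qquad(j=0,1,\; w\text{ enumerated in }W_j,\; n\text{ sufficiently large}).
\]
These endpoint estimates are achievable because $(u_n)_j=\phi_n((1+\D_j^2)^{-1})$ is a continuous function of $\D_j^2$, and the Lipschitz condition $w\in\Lip^0(\D_j)$ ensures $\|[(u_n)_j,w]\|\to 0$; the stated decay rate follows by thinning the subsequence. \cref{lem:lift} then supplies a strictly positive $l\in C^*(h)\subset J_F$ with dense range, and I would set $\mS:=\frac12(Fl^{-1}+l^{-1}F)$. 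The arguments of \cite[Theorem 4.7]{MR16} combined with \cite[Lemma 2.2]{Kuc00} transport verbatim to confirm that $(E,\mS)$ is an unbounded $A$-$C([0,1],B)$-cycle, giving~(1).

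For~(2), the inclusion $l_j\in C^*(h_j)=C^*((1+\D_j^2)^{-1})$ follows from $l\in C^*(h)$ and the fact that localization restricts functional calculus; consequently $l_j$ commutes with $F_{\D_j}$, giving $\mS_j=F_{\D_j}l_j^{-1}$. The inclusion $W_j\subset\Lip(l_j^{-1})$ is immediate from the localized decay estimates, which render the series $[l_j^{-1},w]=\sum_n\varepsilon^{-n}[(d_n)_j,w]$ norm-convergent. Combined with the standard commutator estimate that $[F_{\D_j},w]$ has decay of order $(1+\D_j^2)^{-\frac12}$ for $w\in\Lip(\D_j)$, this also yields $W_j\subset\Lip(\mS_j)$. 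Finally, localizing condition~(b) from the proof of \cref{prop:lift}---namely $\|d_n(1-F^2)^{\frac14}\|\leq\varepsilon^{2n}$---produces $\|(d_n)_j(1+\D_j^2)^{-\frac14}\|\leq\varepsilon^{2n}$, so the series $\sum_n\varepsilon^{-n}(d_n)_j(1+\D_j^2)^{-\frac14}$ converges in norm, yielding the required adjointability of $l_j^{-1}(1+\D_j^2)^{-\frac14}$.

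The hard part will be the simultaneous coordination: one needs a single approximate unit $u_n\in C^*(h)$ that both governs the global quasicentrality required for the cycle structure over $C([0,1],B)$ and enforces sufficiently strong localized decay at each endpoint, relative to the two distinct Lipschitz algebras $\Lip^0(\D_0)$ and $\Lip^0(\D_1)$. The crucial device that distinguishes this construction from the classical Baaj--Julg lift is the engineered choice of $h$ with $h_j=(1+\D_j^2)^{-1}$, which forces $l_j$ to be a function of $\D_j$; without this, both the identity $\mS_j=F_{\D_j}l_j^{-1}$ and the sharp endpoint estimate securing adjointability of $l_j^{-1}(1+\D_j^2)^{-\frac14}$ would fail.
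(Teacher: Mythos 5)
Your overall strategy is the right one, and you have correctly identified the crucial trick: engineer $h = \chi k + (1-F^2)$ with $\chi$ vanishing at the endpoints so that $h_j = (1+\D_j^2)^{-1}$, forcing $l_j\in C^*((1+\D_j^2)^{-1})$ to commute with $F_{\D_j}$. The construction of $l$ via \cref{lem:lift} and the deduction of statement (2) from the engineered endpoint structure are also in line with the paper.

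However, your justification of the localized quasicentrality estimates $\|[(d_n)_j,w]\|\leq\varepsilon^{2n}$ has a real gap. You write that these ``are achievable because $(u_n)_j=\phi_n((1+\D_j^2)^{-1})$ is a continuous function of $\D_j^2$, and the Lipschitz condition $w\in\Lip^0(\D_j)$ ensures $\|[(u_n)_j,w]\|\to 0$.'' This inference is not valid: for an arbitrary approximate unit $\phi_n(h)\subset C^*(h)$, the mere boundedness (or even compactness) of $[(1+\D_j^2)^{-1},w]$ does not give $\|[\phi_n((1+\D_j^2)^{-1}),w]\|\to 0$. The correct mechanism, as in the paper, is to invoke the Akemann--Pedersen theorem \cite[Theorem 3.2]{AkPed} \emph{also for the endpoint localizations}: one needs to show that the approximate unit can be chosen in $C^*(h)$ so that $\ev_j(u_n)$ is quasicentral for $W_j$ relative to $J_{F_{\D_j}}$. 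To set this up, one must first establish the ideal-type inclusions $W_j J_{F_{\D_j}},\, J_{F_{\D_j}} W_j\subset J_{F_{\D_j}}$. This in turn requires showing that $[F_{\D_j},w]$ is compact for $w\in W_j$, and this is precisely where the hypothesis that each $w\in W_j$ is a \emph{product} $T_1T_2$ of elements of $\Lip^0(\D_j)$ is used (so that $[F_{\D_j},T_1]T_2$ is compact as in \cref{prop:bdd_transform}). Your proposal never uses the products hypothesis on $W_j$, which is a symptom of the missing step; without the ideal structure and the corresponding Akemann--Pedersen argument, the localized commutator decay is not established, and with it the membership $W_j\subset\Lip(l_j^{-1})\cap\Lip(\mS_j)$ fails to follow.
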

\begin{proof}
Note that (1) can be obtained by an application of \cref{prop:lift}. In order to achieve (2) simultaneously, we need to construct our lift more carefully.
Consider again the $\sigma$-unital $C^*$-algebra $J_F = \End_{C([0,1],B)}^0(E) + C_F$. 
Let $k\in\End_{C([0,1],B)}^{0}(E)$ be an even strictly positive element and $\chi\in C([0,1])$ be given by $\chi(t) := t(1-t)$. Then $\chi k \in \End_{C([0,1],B)}^{0}(E)$ (cf.\ \cref{lem:localisations}), and we define
\[
 h := \chi k + (1-F^{2}) \in J_{F} .
\]
Consider the localisations 
$$h_{t} := \ev_t(h)=\chi(t)k_{t}+(1-F^{2}_{t}).$$
For $t\in (0,1)$ we have that $\chi(t)>0$, and $\chi(t)k_{t}$ has dense range in $E_{t}$ by \cref{locallydenserange}. Since  $\chi(t)k_t\leq h_t$, $h_{t}$ has dense range in $E_t$ by \cite[Corollary 10.2]{Lance95}. For $t\in\{0,1\}$, we have $h_{t}=(1-F^{2}_{t})^{\frac{1}{2}}=(1+\D^{2}_{t})^{-\frac{1}{2}}$, which has dense range as well. Thus, applying \cref{locallydenserange} again, we conclude that $h$ has dense range in $E$.
Moreover, from \cref{lem:J_F} it follows that $h$ is a strictly positive element in $J_F$. 

Let $\A:= \{a_{i}\}_{i\in\mathbb{N}}\subset A$ be a countable dense subset of $A$, let $\{c_{i}\}_{i\in \mathbb{N}}$ be a countable dense subset of $C^{*}(h)$, and let $\{w_{j,i}\}_{i\in\mathbb{N}}$ be an enumeration of $W_{j}$.  
We have the inclusions $A J_F, J_F A, F J_F, J_F F \subset J_F$ (see \cref{lem:J_F}). 
Since $\ev_j(F)=F_{\D_{j}}$ and $W_{j}\subset\Lip^{0}(\D_{j})$, we have for all $w\in W_{j}$ that $w(1-F^{2}_{{\D}_{j}})=w(1+\D_{j}^{2})^{-1}\in \End_B^0(E_j)$. 
Moreover, by assumption any $w\in W_j$ is of the form $w=T_1T_2$ for $T_1,T_2\in\Lip^0(\D_j)$. Since $[F_{\D_j},T_1]T_2$ is compact, as explained in the proof of \cref{prop:bdd_transform} (cf.\ \cite[Proposition 17.11.3]{Blackadar98}), it follows that also $[F_{\D_j},w] \in \End_B^0(E_j)$. 
It thus holds that 
$$W_{j}J_{F_{\D_{j}}},\, J_{F_{\D_{j}}}W_{j},\, F_{\D_{j}} J_{F_{\D_{j}}},\, J_{F_{\D_{j}}}F_{\D_{j}} \subset J_{F_{\D_{j}}}.$$ 
Furthermore, since $\ev_{j}\colon C([0,1],B)\to B=\End^{0}(B)$ is a surjective $*$-homomorphism we have $\End^{0}_B(E_{j})=\End^{0}_{C([0,1],B)}(E)\hot_{\ev_{j}}1$ and hence $J_{F_{\D_{j}}}=J_{F}\hot_{\ev_{j}}1$. Therefore any approximate unit $u_n\in J_{F}$ gives an approximate unit $\ev_{j}(u_{n})$ for $J_{F_{\D_{j}}}$. 
The $C^*$-subalgebra $C^*(h) \subset J_F$ thus contains a commutative approximate unit $u_{n}$ for $J_F$ which is quasicentral for $A$ and $F$, and such that for $j\in\{0,1\}$, $\ev_{j}(u_n)$ is quasicentral for $W_{j}$ \cite[Theorem 3.2]{AkPed}. 

By fixing a choice of $0<\varepsilon<1$ and selecting a suitable subsequence of $u_{n}$, we can  achieve that, for each $n\in\N$, $d_{n}:=u_{n+1}-u_{n}$ satisfies properties (a)-(e) of the proof of \cref{prop:lift} as well as

\begin{itemize}
\item[(c')] $\|\ev_j(d_{n})[\ev_j(F),w_{j,i}]\|\leq \varepsilon^{2n}$ for all $i\leq n$ and for $j=0,1$;
\item[(d')] $\|[\ev_j(d_{n}),w_{j,i}]\|\leq \varepsilon^{2n}$ for all $i\leq n$ and for $j=0,1$.
\end{itemize}

As in \cref{prop:lift}, property $(c')$ follows because $\ev_{j}(u_n)$ is an approximate unit for $J_{F_{\D_{j}}}$ and $(d')$ follows because $\ev_{j}(u_{n})$ is quasicentral for $W_{j}$. 
Thus, as in \cref{prop:lift}, we can construct a strictly positive element $l\in J_F$, such that the (closure of the) operator 
$$\mathcal{S} := \frac12(F  l^{-1} +  l^{-1} F)$$ is a densely defined and regular self-adjoint operator on $E$, and $(E,\mathcal{S})$ is an unbounded Kasparov $A$-$C([0,1],B)$-module for which we have $\A \subset \Lip^0(\mS)$. 
This proves (1). 

For (2), we first note that $l_j \in C^*(h_j)$ and $h_j = (1+\D_j^2)^{-1}$ for $j=0,1$. 
In particular, $l_j$ commutes with $F_{\D_j}$ and $\mS_j = F_{\D_j} l_j^{-1}$. 
Properties (c') and (d') ensure that $[\mathcal{S}_j,w]$ and $[l_j^{-1},w]$ are bounded for all $w\in W_j$ ($j=0,1$). 
Furthermore, from property (b) it follows that $ l^{-1}(1- F^{2})^{\frac{1}{4}}$ is everywhere defined and bounded, and localising in $j=0,1$ then shows that $l_j^{-1} (1+\D_j^2)^{-\frac14}$ is bounded.
\end{proof}

\begin{thm}
\label{thm:lift_homotopy}
Suppose $A$ is separable, and $B$ $\sigma$-unital. 
Consider two unbounded  $A$-$B$-cycles $(\pi_0,E_0,\D_0)$ and $(\pi_1,E_1,\D_1)$. 
Any homotopy $(\pi,E,F)$ between $(\pi_0,E_0,F_{\D_0})$ and $(\pi_1,E_1,F_{\D_1})$ can be lifted to an unbounded $A$-$C([0,1],B)$-cycle $(\pi,E,\mS)$ such that, for $j=0,1$, 
\begin{itemize}
\item the endpoints $\ev_j(\pi,E,\mS)$ are unitarily equivalent to $(\pi_j,E_j,f_j(\D_j))$ for dampening functions $f_j\colon\R\to\R$;
\item there exist countable self-adjoint subsets $W_j\subset\Lip^0(\D_j)\cap\Lip(f_j(\D_j))$ such that $\pi_j(A)\subset\overline{W_j}$. 
\end{itemize}
Moreover, if $(\pi,E,F)$ is an operator-homotopy, then $(\pi,E,\mS)$ is an operator-homotopy. 
\end{thm}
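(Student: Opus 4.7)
The strategy is to apply Proposition \ref{prop:unbdd_lift} and then recognise each endpoint operator $\mS_j$ as $f_j(\D_j)$ for a dampening function $f_j$ via continuous functional calculus. Since $A$ is separable, I first invoke Lemma \ref{lem:separable_Lip} on each endpoint cycle $(\pi_j,E_j,\D_j)$ to produce countable subsets $W_j\subset \Lip^0(\D_j)$ whose elements are products of two elements of $\Lip^0(\D_j)$ and with $\pi_j(A)\subset\overline{W_j}$; enlarging $W_j$ by adjoints (which remain products of two elements of $\Lip^0(\D_j)$) makes $W_j$ self-adjoint. Absorbing the unitary equivalences at the endpoints of the given homotopy into the representations, I may assume without loss of generality that $\ev_j(\pi,E,F) = (\pi_j,E_j,F_{\D_j})$; the equivalences can be reinstated at the end.

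Feeding $W_0,W_1$ into Proposition \ref{prop:unbdd_lift} produces a strictly positive $l\in J_F$ such that $\mS := \frac{1}{2}(F l^{-1}+ l^{-1}F)$ makes $(\pi,E,\mS)$ an unbounded $A$-$C([0,1],B)$-cycle with $\mS_j = F_{\D_j}l_j^{-1}$, $l_j\in C^*((1+\D_j^2)^{-1})$, $W_j\subset \Lip(l_j^{-1})\cap \Lip(\mS_j)$, and $l_j^{-1}(1+\D_j^2)^{-1/4}$ extending to an adjointable endomorphism. Since $l_j$ lies in the commutative $C^*$-algebra generated by $(1+\D_j^2)^{-1}$, continuous functional calculus identifies $l_j = \phi_j(\D_j)$ for an even positive continuous function $\phi_j$ on $\spec(\D_j)$ with $\phi_j(x)\to 0$ as $|x|\to\infty$ (reflecting the non-unitality of $C^*((1+\D_j^2)^{-1})$). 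Strict positivity of $l_j$ forces $\phi_j>0$ on $\spec(\D_j)$, so setting
\[
f_j(x) := x(1+x^2)^{-1/2}\phi_j(x)^{-1}
\]
yields $\mS_j = f_j(\D_j)$. The function $f_j$ is odd, tends to infinity at infinity (since $\phi_j$ vanishes there), and the adjointability bound $\phi_j(x)^{-1} = O((1+x^2)^{1/4})$ gives $f_j(x)(1+x^2)^{-1/2} = O(|x|^{-1/2})\to 0$. After any continuous odd extension to all of $\R$, $f_j$ is a dampening function, and the inclusion $W_j\subset \Lip^0(\D_j)\cap \Lip(f_j(\D_j))$ follows from $\Lip(f_j(\D_j))=\Lip(\mS_j)$ together with Proposition \ref{prop:unbdd_lift}(2).

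The operator-homotopy clause is then immediate, since the construction of Proposition \ref{prop:unbdd_lift} modifies only the operator and leaves the Hilbert $C([0,1],B)$-module and its representation $\pi$ untouched; if $E = C([0,1],E')$ with $\pi$ induced from a representation on $E'$, the same is true for $(\pi,E,\mS)$. The principal technical delicacy lies in the spectral identification $\mS_j = f_j(\D_j)$ for a dampening $f_j$: both the positivity of $\phi_j$ on $\spec(\D_j)$ and the precise asymptotic control $\phi_j(x)^{-1} = O((1+x^2)^{1/4})$ must be read off from the fine properties of $l$ engineered in Proposition \ref{prop:unbdd_lift}, so that the two dampening conditions fall out simultaneously at both endpoints $j=0,1$.
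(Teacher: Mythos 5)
Your proposal follows essentially the same route as the paper's proof: absorb the endpoint unitaries, extract $W_j$ from \cref{lem:separable_Lip}, feed them into \cref{prop:unbdd_lift}, and read off $\mS_j = f_j(\D_j)$ with $f_j$ a dampening function from the structure of $l_j$. Your expansion of the spectral identification step (the positivity of $\phi_j$ on $\spec(\D_j)$ from dense range, the asymptotic $\phi_j(x)^{-1}=O((1+x^2)^{1/4})$ from boundedness of $l_j^{-1}(1+\D_j^2)^{-\frac14}$, the decay of $\phi_j$ from $l_j\in C^*((1+\D_j^2)^{-1})$, and closing $W_j$ under adjoints to get self-adjointness, which \cref{lem:separable_Lip} as stated does not give) is correct and usefully fills in what the paper leaves terse; the only small omission is that you should first reduce, as the paper does via \cite[17.4.3]{Blackadar98}, to the case $F=F^*$, $F^2\leq 1$, since \cref{prop:unbdd_lift} assumes these hypotheses.
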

\begin{proof}
We may assume (without loss of generality) that $F=F^*$ and $F^2\leq1$ \cite[17.4.3]{Blackadar98}. 
For $j=0,1$, we have unitary equivalences $U_j \colon \ev_j( E) \to E_j$ such that $\ev_j( F) = U_j^* F_{\D_j} U_j$. Then $\D_j$ on $E_j$ is unitarily equivalent to $U_j^* \D_j U_j$ on $\ev_j( E)$. To simplify notation, we will from here on ignore this unitary equivalence and simply assume that $\ev_j( E, F)$ is \emph{equal} to $(E_j,F_{\D_j})$. 

We know by \cref{lem:separable_Lip} that, for $j=0,1$, there exist countable self-adjoint subsets $W_j \subset \Lip^0(\D_j)$ consisting of products of elements in $\Lip^0(\D_j)$, such that $\pi_j(A) \subset \overline{W_j}$. 
From \cref{prop:unbdd_lift}, we obtain an unbounded $A$-$C([0,1],B)$-cycle $\big(  E,\mathcal{S} := \frac12( F  l^{-1} +  l^{-1}  F) \big)$, which provides a homotopy between $(E_0,\mathcal{S}_0)$ and $(E_1,\mathcal{S}_1)$, where $\mathcal{S}_j := \ev_j(\mathcal{S})$. 
By property (2) of \cref{prop:unbdd_lift}, we know that $l_{j} \in C^{*}((1+\D_j^{2})^{-1})$, $\mathcal{S}_{j} = F_{\D_j} l_j^{-1}$, $W_j \subset \Lip(l_j^{-1}) \cap \Lip(\mathcal{S}_{j})$, and $l_j^{-1} (1+\D_j^2)^{-\frac14}$ is bounded. 
It follows that we can write $\mathcal{S}_j = f_j(\D_j)$ for some dampening function $f_j$, which proves the first statement. 
Furthermore, if we have in fact an \emph{operator}-homotopy $(E,F)$, then it is clear that the lift $( E,\mathcal{S})$ obtained from \cref{prop:unbdd_lift} is also an \emph{operator}-homotopy. 
\end{proof}

\subsection{The isomorphism with \texorpdfstring{$\KK$}{KK}-theory}
\label{sec:KK_isomorphism}

Using the results from the previous sections, we can now prove that our semigroup $\overline{\UKK}(A,B)$ is isomorphic to Kasparov's $\KK$-group.
\begin{thm}
\label{thm:bdd_transform_KK_isom}
Suppose $A$ is separable, and $B$ $\sigma$-unital. 
The bounded transform induces a semigroup isomorphism $\overline{\UKK}(A,B) \to \KK(A,B)$, given by $[(E,\D)] \mapsto [(E,F_\D)]$. 
\end{thm}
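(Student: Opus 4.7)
The plan is to check that the bounded transform gives a well-defined semigroup homomorphism, deduce surjectivity from \cref{thm:bdd_transform_surj}, and prove injectivity by combining \cref{thm:lift_homotopy} with \cref{prop:functional_dampening}.

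First I would verify well-definedness and additivity. \cref{prop:bdd_transform} guarantees that $(E,F_\D)$ is a bounded Kasparov module whenever $(E,\D)$ is an unbounded cycle, and unitary equivalences are clearly preserved. If $(\til E,\til\D)$ is an unbounded $A$-$C([0,1],B)$-cycle, then $(\til E,F_{\til\D})$ is a bounded Kasparov module over $C([0,1],B)$; since the continuous functional calculus commutes with the localisations $\ev_j$ (a standard fact about regular self-adjoint operators on localisations, cf.\ the appendix), we have $\ev_j(F_{\til\D}) = F_{\D_j}$. Hence bounded transform carries unbounded homotopies to bounded homotopies, and it obviously preserves direct sums. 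It therefore induces a semigroup homomorphism $\overline{\UKK}(A,B) \to \KK(A,B)$.

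Surjectivity is immediate from \cref{thm:bdd_transform_surj}. The substantive step is injectivity. Suppose $(\pi_0,E_0,\D_0)$ and $(\pi_1,E_1,\D_1)$ are unbounded cycles whose bounded transforms are connected by a bounded homotopy $(\pi,E,F)$. I would apply \cref{thm:lift_homotopy} to obtain an unbounded lift $(\pi,E,\mS)$ whose endpoints $\ev_j(\pi,E,\mS)$ are unitarily equivalent to $(\pi_j,E_j,f_j(\D_j))$ for dampening functions $f_j$, together with countable self-adjoint subsets $W_j \subset \Lip^0(\D_j)\cap\Lip(f_j(\D_j))$ such that $\pi_j(A)\subset\overline{W_j}$. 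This last inclusion is exactly the hypothesis of \cref{prop:functional_dampening}, which therefore produces operator-homotopies $(E_j,\D_j) \sim_{oh} (E_j,f_j(\D_j))$ for $j=0,1$. Concatenating,
\[
(E_0,\D_0) \sim_{oh} (E_0,f_0(\D_0)) \simeq \ev_0(E,\mS) \sim_h \ev_1(E,\mS) \simeq (E_1,f_1(\D_1)) \sim_{oh} (E_1,\D_1),
\]
which yields $[(E_0,\D_0)] = [(E_1,\D_1)]$ in $\overline{\UKK}(A,B)$, as required.

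The real difficulty has already been dispatched in \cref{thm:lift_homotopy} (with its refined lifting construction delivering the subsets $W_j$) and in \cref{prop:functional_dampening}; the entire content of the present theorem is the observation that the $W_j$ produced by the lifting construction are tailored precisely to meet the hypothesis of the functional dampening proposition. Once that compatibility is granted, only the bookkeeping of the equivalence relation $\sim_h$ remains, and no further analytic work is needed.
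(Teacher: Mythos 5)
Your proposal is correct and follows the paper's argument essentially verbatim: surjectivity from \cref{thm:bdd_transform_surj}, and injectivity by lifting a bounded homotopy via \cref{thm:lift_homotopy} and then using \cref{prop:functional_dampening} to close the gap between $\D_j$ and $f_j(\D_j)$ at the endpoints. The only addition is your explicit check that the bounded transform commutes with localisations, which the paper leaves implicit.
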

\begin{proof}
If there exists a homotopy $( E,\D)$ between unbounded  $A$-$B$-cycles $(E_0,\D_0)$ and $(E_1,\D_1)$, then $( E,F_{\D})$ provides a homotopy between $(E_0,F_{\D_0})$ and $(E_1,F_{\D_1})$. 
Moreover, the bounded transform is compatible with direct sums, so it induces a well-defined semi\-group homomorphism. 
Furthermore, this homomorphism is surjective by \cref{thm:bdd_transform_surj}, so it remains to prove that it is also injective. 

Consider two unbounded  $A$-$B$-cycles $(E_0,\D_0)$ and $(E_1,\D_1)$, such that $[(E_0,F_{\D_0})] = [(E_1,F_{\D_1})]$. Then there exists a homotopy $(E,F)$ between $(E_0,F_{\D_0})$ and $(E_1,F_{\D_1})$. 
From \cref{thm:lift_homotopy} we obtain an unbounded $A$-$C([0,1],B)$-cycle $(E,\mS)$ such that, for $j=0,1$, the endpoints $\ev_j(E,\mS)$ are unitarily equivalent to $(E_j,f_j(\D_j))$ for dampening functions $f_j\colon\R\to\R$, and there exist self-adjoint subsets $W_j\subset\Lip^0(\D_j)\cap\Lip(f_j(\D_j))$ such that $\pi_j(A)\subset\overline{W_j}$. 
It then follows from \cref{prop:functional_dampening} that $\D_j$ is operator-homotopic to $\mathcal{S}_{j}$. 
Thus we have the composition of homotopies 
\begin{align*}
\D_0 &\sim_{oh} \mathcal{S}_{0} \sim_{h} \mathcal{S}_{1} \sim_{oh} \D_1 ,
\end{align*}
which proves that $[(E_0,\D_0)] = [(E_1,\D_1)]$. 
\end{proof}

\begin{remark}
\label{remark:KK_group}
\emph{A priori}, $\overline{\UKK}(A,B)$ is a semigroup, and the isomorphism $$\overline{\UKK}(A,B) \to \KK(A,B),$$ is an isomorphism of semigroups. Since $\KK(A,B)$ is a group, it of course follows that $\overline{\UKK}(A,B)$ is also a group. 
However, the isomorphism $\overline{\UKK}(A,B) \to \KK(A,B)$ requires the assumption that $A$ is separable. 
In \cref{thm:UKK_group} we will give a direct proof that $\overline{\UKK}(A,B)$ is a group, which avoids the bounded transform and therefore also works for non-separable ($\sigma$-unital) $C^*$-algebras. 
\end{remark}

For any dense $*$-subalgebra $\A\subset A$, we define $\Psi_1(\A,B)$ as the set of those $(\pi,E,\D) \in \overline{\Psi}_1(A,B)$ for which $\pi(\A) \subset \Lip^0(\D)$, and we define $\UKK(\A,B)$ as the homotopy equivalence classes of elements in $\Psi_1(\A,B)$ (where it is understood that the homotopies are given by elements in $\Psi_1(\A,C([0,1],B))$). 
The natural inclusion $\Psi_1(\A,B) \into \overline{\Psi}_1(A,B)$ induces a well-defined semigroup homomorphism $\UKK(\A,B) \to \overline{\UKK}(A,B)$. We say that $\A$ is \emph{countably generated} if $\A$ contains a countable subset that generates it as a $*$-algebra over $\mathbb{C}$. We emphasize that this does not involve taking closures of any kind.
While, as we explained in \cref{remark:subalgebra}, it is \emph{not necessary} to fix a countably generated dense $*$-subalgebra $\A\subset A$, we will show next that it is nevertheless \emph{possible} to define unbounded $\KK$-theory using any such fixed choice for $\A\subset A$. 

\begin{prop}
Suppose $A$ is separable, and $B$ $\sigma$-unital. 
For any countably generated dense $*$-subalgebra $\A\subset A$, the map $\UKK(\A,B) \to \overline{\UKK}(A,B)$ is an isomorphism. 
\end{prop}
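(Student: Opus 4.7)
The plan is to show that the map is both surjective and injective by revisiting the chain of homotopies underlying \cref{thm:bdd_transform_KK_isom} and keeping track of the inclusion $\pi(\A)\subset\Lip^0$ at each stage. The critical observation is that $\Lip^0(\D)$ and $\Lip(\D)$ are $*$-subalgebras over $\C$ of $\End_B^*(E)$, so to check $\pi(\A)\subset\Lip^0(\D)$ it suffices to verify the inclusion on a fixed countable generating set $\A_g\subset\A$ and then close under the $*$-algebra operations.

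For surjectivity, given $(\pi,E,\D)\in\overline{\Psi}_1(A,B)$ choose a countable dense subset $\A'\subset A$ containing $\A_g$, and apply \cref{prop:lift} to the bounded Kasparov module $(\pi,E,F_\D)$ with $\A'$ as the countable dense input. The outcome is an ordinary unbounded Kasparov module $(\pi,E,\D')$ with $\A'\subset\Lip^0(\D')$, whence $\A\subset\Lip^0(\D')$, so that $(\pi,E,\D')\in\Psi_1(\A,B)$. Since $F_\D$ and $F_{\D'}$ are operator-homotopic, \cref{thm:bdd_transform_KK_isom} gives $[(\pi,E,\D)]=[(\pi,E,\D')]$ in $\overline{\UKK}(A,B)$, exhibiting $[(\pi,E,\D')]\in\UKK(\A,B)$ as a preimage.

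For injectivity, let $(\pi_0,E_0,\D_0),(\pi_1,E_1,\D_1)\in\Psi_1(\A,B)$ have the same class in $\overline{\UKK}(A,B)$. A bounded Kasparov homotopy $(E,F)$ between $(E_0,F_{\D_0})$ and $(E_1,F_{\D_1})$ exists; feed it into \cref{prop:unbdd_lift}, choosing the countable dense subset used in that proof to contain $\A_g$. The lift $(\pi,E,\mS)$ then satisfies $\A\subset\Lip^0(\mS)$, so it lies in $\Psi_1(\A,C([0,1],B))$, with endpoints $\mS_j=f_j(\D_j)$ for dampening functions $f_j$. Localising yields $\pi_j(\A)\subset\Lip^0(f_j(\D_j))$, and in particular $\pi_j(\A)\subset\Lip(\D_j)\cap\Lip(f_j(\D_j))$. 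Applying \cref{prop:functional_dampening} with the self-adjoint set $W:=\pi_j(\A)$ then produces operator-homotopies $\D_j\sim_{oh}f_j(\D_j)$ whose operator $\til\D$ satisfies $\pi_j(\A)\subset\Lip^0(\til\D)$, since the boundedness of $[\til\D,w]$ and the pointwise-in-$t$ compactness of $w(\D_t\pm i)^{-1}$ established in the proof of \cref{prop:functional_dampening} hold for all $w\in W$. Concatenating $\D_0\sim_{oh}\mS_0\sim_h\mS_1\sim_{oh}\D_1$ inside $\Psi_1(\A,C([0,1],B))$ proceeds as in \cref{sec:semigroup}, using that $\Lip^0(\D)\times_U\Lip^0(\D')\subset\Lip^0(\D\oplus\D')$ preserves the property $\pi(\A)\subset\Lip^0$. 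The main obstacle is purely bookkeeping: one must arrange a single countable dense subset $\A'\subset A$ that simultaneously contains $\A_g$ and serves as the countable dense input in the proofs of \cref{prop:lift,prop:unbdd_lift}, so that the inclusion $\A_g\subset\Lip^0$ coming out of these constructions propagates, by the $*$-algebra structure of $\Lip^0$, to the full inclusion $\A\subset\Lip^0$ of every operator appearing in the chain.
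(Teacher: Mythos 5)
Your proposal is correct, and it follows essentially the same strategy as the paper: push a given $\overline{\Psi}_1$-cycle or homotopy through the lifting machinery of \cref{prop:lift,prop:unbdd_lift}, arrange the countable dense input to contain a generating set $\A_g$, and then use that $\Lip^0$ is a $*$-algebra to upgrade $\A_g\subset\Lip^0$ to $\A\subset\Lip^0$. The paper's own proof is terser: it draws the commuting triangle with $\KK(A,B)$, reduces the claim to $\UKK(\A,B)\to\KK(A,B)$ being an isomorphism, cites \cref{thm:bdd_transform_surj} for surjectivity, and for injectivity instructs the reader to rerun \cref{thm:lift_homotopy,thm:bdd_transform_KK_isom} with the choice $W_j=\pi_j(\A)$. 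Your argument is a more explicit unpacking of the same idea, and you make two genuinely useful micro-improvements. First, for injectivity you do not need to feed $\pi_j(\A)$ into \cref{prop:unbdd_lift} as the set $W_j$ (which would raise the issues that $\pi_j(\A)$ need not be countable and need not consist of products); instead you choose the countable dense subset $\A'$ in the proof of \cref{prop:unbdd_lift} to contain $\A_g$, deduce $\pi(\A)\subset\Lip^0(\mS)$ from the $*$-algebra structure, and then \emph{localise} this inclusion at the endpoints to get $\pi_j(\A)\subset\Lip^0(\mS_j)=\Lip^0(f_j(\D_j))$ directly. Second, you verify that the operator-homotopy of \cref{prop:functional_dampening} stays in $\Psi_1(\A,\cdot)$ by observing that its estimates hold for every $w\in W=\pi_j(\A)$, and that the concatenation argument preserves the condition $\pi(\A)\subset\Lip^0$ because $\Lip^0(\D)\times_U\Lip^0(\D')\subset\Lip^0(\D\oplus\D')$. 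The only minor caveat is notational: in the surjectivity step the conclusion $[(\pi,E,\D)]=[(\pi,E,\D')]$ in $\overline{\UKK}(A,B)$ follows from injectivity of the bounded transform of \cref{thm:bdd_transform_KK_isom} applied to the operator-homotopy of bounded transforms, which you invoke correctly but somewhat implicitly; the paper makes this reduction once and for all via the commuting diagram.
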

\begin{proof}
We have the following commuting diagram. 
\begin{align*}
\xymatrix{
\UKK(\A,B) \ar[rr] \ar[dr] && \overline{\UKK}(A,B) \ar[dl] \\
& \KK(A,B) & 
}
\end{align*}
We know from \cref{thm:bdd_transform_KK_isom} that the map $\overline{\UKK}(A,B) \to \KK(A,B)$ is an isomorphism. Thus we need to show that also $\UKK(\A,B) \to \KK(A,B)$ is an isomorphism. 
The assumption that $A$ is separable ensures that the bounded transform $\UKK(\A,B) \to \KK(A,B)$ is surjective (cf.\ \cref{thm:bdd_transform_surj}). 
Moreover, the proofs of \cref{thm:lift_homotopy,thm:bdd_transform_KK_isom} with the special choice $W_j = \pi_j(\A)$ show that the bounded transform is also injective. 
\end{proof}

\section{Degenerate cycles}
\label{sec:degenerate}

In this section, we will consider two notions of degenerate cycles in unbounded $\KK$-theory, namely `algebraically degenerate' and `spectrally degenerate' cycles. 
Our aim is to prove the following:
\begin{itemize}
\item any degenerate cycle is \emph{null-homotopic}, i.e.\ homotopic to the zero cycle $(0,0)$; 
\item any homotopy can be implemented as an operator-homotopy modulo addition of degenerate cycles. 
\end{itemize}

\subsection{Algebraically degenerate cycles}
\label{sec:alg_deg}

\begin{defn}
An unbounded  $A$-$B$-cycles $(\pi,E,\D)$ is called \emph{algebraically degenerate} if $\pi=0$. 
\end{defn}

By considering the obvious homotopy $(C_0((0,1],E),\D)$, we easily obtain: 
\begin{lem}
\label{lem:alg_deg_null}
An algebraically degenerate unbounded $A$-$B$-cycle $(E,\D)$ is null-homotopic. 
\end{lem}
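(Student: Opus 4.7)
The plan is to write down the explicit homotopy suggested by the excerpt, namely the Hilbert $C([0,1],B)$-module $\til E := C_0((0,1],E)$ together with the pointwise extension $\til\D$ of $\D$ and the zero representation $\til\pi := 0$, and to check that this triple is an unbounded $A$-$C([0,1],B)$-cycle whose endpoints are the zero cycle and $(E,\D)$.

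First I would fix the module structure. Viewing $C_0((0,1])$ as an ideal in $C([0,1])$ by extension by zero at $0$, I regard $\til E = C_0((0,1],E)$ as a countably generated Hilbert $C([0,1],B)$-module in the obvious way. Since functions in $\til E$ vanish at $0$, the localisation at $t=0$ satisfies $\til E\hot_{\ev_0}B = 0$, so $\ev_0(\til\pi,\til E,\til\D) = (0,0,0)$. For $t\in(0,1]$, evaluation at $t$ gives a unitary $\til E\hot_{\ev_t}B \simeq E$, and in particular $\ev_1(\til\pi,\til E,\til\D) \simeq (0,E,\D)$, which is the original algebraically degenerate cycle (since $\pi=0$).

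Next I would construct $\til\D$. Taking the constant family $\D_y = \D$ for $y\in Y := (0,1] \subset X := [0,1]$, with core $\E\subset E$ any core for $\D$, the family satisfies the hypotheses of \cref{lem:fam_reg_sa}: the map $y\mapsto\D_y\psi$ is constant hence continuous, and since $\til E\hot_{\ev_0}B = 0$ the lemma (applied to the open subset $Y=(0,1]$) produces a regular self-adjoint operator $\til\D$ on $\til E$ with $(\til\D\psi)(t) = \D\psi(t)$. Moreover, $\til\D$ is odd since $\D$ is.

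Finally I would verify the cycle condition and conclude. Because $\til\pi = 0$, the requirement $\til\pi(A) \subset \overline{\Lip^0(\til\D)}$ is trivially satisfied, so $(\til\pi,\til E,\til\D) \in \overline{\Psi}_1(A,C([0,1],B))$. Combining this with the computation of endpoints above yields a homotopy between $(0,0)$ and $(0,E,\D)=(\pi,E,\D)$, proving null-homotopy. There is essentially no obstacle here: the only subtlety worth flagging is the appeal to \cref{lem:fam_reg_sa} with $Y\neq X$, which is precisely what allows the localisation at $0$ to collapse to zero while keeping $\til\D$ regular self-adjoint on the whole module.
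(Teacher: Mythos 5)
Your proposal is correct and is exactly the paper's intended argument: the paper simply states ``the obvious homotopy $(C_0((0,1],E),\D)$'' without elaboration, and you have filled in the details, including the appeal to \cref{lem:fam_reg_sa} with $Y=(0,1]\subsetneq X=[0,1]$ (which is precisely the intended use of that lemma, as noted in the remark following it).
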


As an application of the above lemma, we will show that two unbounded cycles $(\pi,E,\D )$ and $(\pi,E,\D')$ are homotopic if the difference $\D-\D'$ is \emph{`locally bounded'}. 

\begin{prop}
Let $(\pi,E,\D )$ and $(\pi,E,\D')$ be unbounded  $A$-$B$-cycles. 
Suppose there exists a subset $W \subset \Lip^0(\D)\cap\Lip^0(\D')$ with $\pi(A) \subset \overline{W}$ such that for each $w\in W$, the operator $(\D -\D')w$ extends to a bounded operator. 
Then $(\pi,E,\D )$ and $(\pi,E,\D')$ are homotopic. 
\end{prop}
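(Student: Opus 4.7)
The plan is to build an explicit homotopy by linear interpolation of the operators. Take $\til E := C([0,1], E)$ with the induced representation $\til\pi$ of $A$, and consider the candidate family
\[
\D_t := (1-t)\D + t\D', \qquad t\in[0,1],
\]
defined initially on $\Dom\D \cap \Dom\D'$. The endpoints $t=0,1$ recover $\D$ and $\D'$, so if $(\til\pi,\til E,\til\D)$ defines an unbounded $A$-$C([0,1],B)$-cycle then it is the required homotopy.

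I would verify three points. First, each $\D_t$ extends to a regular self-adjoint operator on $E$ defined on some dense common core. Second, the family $\{\D_t\}$ assembles into a regular self-adjoint operator $\til\D$ on $\til E$ via \cref{lem:fam_reg_sa}; here continuity of $t\mapsto\D_t\psi$ on the core is immediate from the linearity of the interpolation in $t$. Third, $\til\pi(A)\subset\overline{\Lip^0(\til\D)}$: each $w\in W$ satisfies $[\D_t,w]=(1-t)[\D,w]+t[\D',w]$, which is uniformly bounded and norm-continuous in $t$, so $w\in\Lip(\til\D)$; and compactness of $w(1+\til\D^2)^{-1/2}$ on $\til E$ follows by combining pointwise compactness of $w(1+\D_t^2)^{-1/2}$ (immediate at the endpoints and derived at interior times by a resolvent identity exploiting the bounded perturbation $(\D-\D')w$) with norm-continuity in $t$. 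Taking closures then yields $\til\pi(A)\subset\overline{\Lip^0(\til\D)}$.

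The main obstacle is the first point. Since $\D-\D'$ is only \emph{locally} bounded, namely $(\D-\D')w$ is bounded for $w\in W$, rather than relatively bounded with respect to $\D$ in the Kato-Rellich sense, the standard perturbation theorems do not apply directly; there is also the subtlety that $\Dom\D\cap\Dom\D'$ need not be dense in $E$ a priori. The strategy is to consider the submodule generated by vectors of the form $w\xi$ for $w\in W$ and suitable $\xi$, where the bounded operator extending $(\D-\D')w$ recasts $\D_t$ locally as a bounded perturbation of $\D$. Using $\pi(A)\subset\overline{W}$ (and separating off any algebraically degenerate summand via \cref{lem:alg_deg_null} to deal with a subspace where $\pi$ vanishes), one aims to exhibit such a submodule as a dense core on which $\D_t\pm i$ has dense range, then extend by closure.
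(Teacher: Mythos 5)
Your linear-interpolation strategy stalls on the very obstruction you flag, and the proposed workaround does not close it. The hypothesis only controls $(\D-\D')w$ for $w\in W$, so $\D-\D'$ is not a relatively bounded perturbation of $\D$; there is then no reason for $\Dom\D\cap\Dom\D'$ to be dense, nor for $\D_t=(1-t)\D+t\D'$ (on whatever domain you choose) to be regular and self-adjoint for $0<t<1$. The suggested repair---restricting to the submodule spanned by vectors $w\xi$ and ``separating off an algebraically degenerate summand''---presupposes a direct-sum decomposition of $E$ into a part where $\pi$ acts trivially and a complement, which a general $*$-representation does not admit; and even on the span of the $w\xi$ one would still have to show that $\D_t\pm i$ has dense range, which local boundedness alone does not give. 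Likewise, the proposed ``resolvent identity exploiting the bounded perturbation $(\D-\D')w$'' would need boundedness of $(\D'-\D)(\D\pm i)^{-1}$ or of $w(\D'-\D)$, neither of which is assumed.

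The paper avoids all of this by never moving the operator: it works on $C([0,1],E\oplus E)$ with the \emph{fixed} operator $\D\oplus\D'$ and interpolates the \emph{representation}, setting $\pi_t(a):=(a\oplus a)P_t$ along a norm-continuous path of projections $P_t$ rotating from $1\oplus 0$ to $0\oplus 1$. Regularity and self-adjointness of $\til\D=\D\oplus\D'$ are then automatic and the resolvents are $t$-independent, so the only thing to verify is boundedness of $[\D\oplus\D',(w\oplus w)P_t]$; its off-diagonal entries are $(\D-\D')w+[\D',w]$ (up to sign), bounded precisely by the hypothesis. The endpoints $(\pi\oplus 0,E\oplus E,\D\oplus\D')$ and $(0\oplus\pi,E\oplus E,\D\oplus\D')$ differ from $(\pi,E,\D)$ and $(\pi,E,\D')$ by algebraically degenerate summands, which are null-homotopic by \cref{lem:alg_deg_null}. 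If you want a linear interpolation, it has to live in the representation as above; interpolating the unbounded operators themselves does not yield a well-defined cycle under the stated hypotheses.
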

\begin{proof}
Consider the unbounded $A$-$C([0,1],B)$-cycle $(\pi , C([0,1],E\oplus E) , \D \oplus \D')$ with the representation given for $t\in[0,1]$ by $\pi_t(a) := (a\oplus a) P_t$ in terms of the norm-continuous family of projections 
\[
P_t := \mattwo{\cos^2(\frac{\pi t}2)}{\cos(\frac{\pi t}2)\sin(\frac{\pi t}2)}{\cos(\frac{\pi t}2)\sin(\frac{\pi t}2)}{\sin^2(\frac{\pi t}2)} .
\]
We note that $P_0 = 1\oplus0$ and $P_1 = 0\oplus1$. 
For homogeneous $w\in W$ we compute 
\begin{multline*}
[ \D \oplus \D' , (w\oplus w) P_t ] = \\
\mattwo{[\D ,w]\cos^2(\frac{\pi t}2)}{(\D w-(-1)^{\deg w}w\D') \cos(\frac{\pi t}2) \sin(\frac{\pi t}2)}{(\D'w-(-1)^{\deg w}w\D ) \cos(\frac{\pi t}2)\sin(\frac{\pi t}2)}{[\D',w] \sin^2(\frac{\pi t}2)} .
\end{multline*}
We observe that $\D w-(-1)^{\deg w}w\D' = (\D -\D')w + [\D',w]$ is bounded, and similarly for $\D'w-(-1)^{\deg w}w\D $. 
Hence $[ \D \oplus \D' , (w\oplus w) P_t ]$ is uniformly bounded and norm-continuous in $t$, and we obtain $(w\oplus w)P_\bullet \subset \Lip(\D\oplus\D')$. 
Moreover, since the resolvents of $\D\oplus\D'$ are constant in $t$, we have in fact $(w\oplus w)P_\bullet \subset \Lip^0(\D\oplus\D')$. 
Thus we have 
\[
\pi_\bullet(A) \subset \overline{\{ (w\oplus w)P_\bullet \mid w\in W \}} \subset \overline{\Lip^0(\D\oplus\D')} , 
\]
and we have a homotopy between $(\pi\oplus0 , E\oplus E , \D \oplus \D')$ and $( 0\oplus\pi , E\oplus E , \D \oplus \D')$. Finally, since algebraically degenerate cycles are null-homotopic by \cref{lem:alg_deg_null}, we note that $(\pi\oplus0 , E\oplus E , \D \oplus \D')$ is homotopic to $(\pi,E,\D )$, and that $( 0\oplus\pi , E\oplus E , \D \oplus \D')$ is homotopic to $(\pi,E,\D')$. 
\end{proof}

\begin{remark}
The assumption that $(\D-\D')w$ is bounded for all $w\in W$ is interpreted as saying that $\D-\D'$ is \emph{locally bounded}. 
In the above proposition, we have assumed that both $(E,\D)$ and $(E,\D')$ are unbounded cycles. Under certain conditions, it suffices to assume only that $(E,\D)$ is an unbounded cycle; using local boundedness of $\D-\D'$ one can then \emph{prove} that $(E,\D')$ is also an unbounded cycle. We refer to \cite{vdD18} for further details. 
\end{remark}

\subsection{Spectrally degenerate cycles}
\label{sec:spec_deg}

We denote by $\sgn:\mathbb{R}\setminus\{0\}\to \{\pm 1\}$ the function $\sgn (x):=\frac{x}{|x|}$. We say that a regular self-adjoint operator $\D:\Dom \D\to E$ is \emph{invertible} if there exists $\D^{-1}\in\End^{*}_{B}(E)$ that satisfies $\D\D^{-1}=\D^{-1}\D=1$. It then follows that $\Dom\D=\Ran\D^{-1}=\Ran |\D|^{-1}$ and $\Ran\D=E$.
Thus if $\D$ is invertible, $\sgn(\D)$ is well-defined and equal to $\D|\D|^{-1}$. 

\begin{defn}
An unbounded $A$-$B$-cycle $(\pi,E,\D)$ is called \emph{spectrally degenerate} if $\D$ is invertible and there exists $W\subset\Lip^{0}(\D)$ such that $\pi(A)\subset\overline{W}$ and $[\sgn(\D),w] = 0$ for all $w\in W$. 
\end{defn}

\begin{lem}
\label{lem:spec_deg_Lip_reg}
Let $\D:\Dom \D\to E$ be self-adjoint, regular, and invertible. If $w\in\End^{*}_{B}(E)$ is such that $w\colon\Dom\D\to\Dom\D$ and $[\sgn(\D),w]=0$, 
then $[\D,w]$ is bounded if and only if $[|\D|,w]$ is bounded. 
%In particular every  spectrally degenerate unbounded $A$-$B$-cycle is Lipschitz regular. 
\end{lem}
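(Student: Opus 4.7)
The plan is to exploit the polar-type decomposition $\D = \sgn(\D)|\D|$ furnished by the continuous functional calculus, using that $[\sgn(\D),w]=0$ lets us transfer commutators with $\D$ to commutators with $|\D|$ at the cost of multiplication by the bounded involution $\sgn(\D)$. First I would observe that invertibility of $\D$ places $0$ outside $\spec(\D)$, so $\sgn$ is continuous on $\spec(\D)$ and the functional calculus of \cite[Theorem 10.9]{Lance95} produces a bounded adjointable operator $\sgn(\D)\in\End^*_B(E)$ with $\sgn(\D)^*=\sgn(\D)$ and $\sgn(\D)^2=1$. The same calculus yields the factorisations $\D=\sgn(\D)|\D|=|\D|\sgn(\D)$ and $|\D|=\sgn(\D)\D=\D\sgn(\D)$, together with the domain equality $\Dom|\D|=\Dom\D$. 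In particular, the hypothesis $w(\Dom\D)\subset\Dom\D$ immediately gives $w(\Dom|\D|)\subset\Dom|\D|$, and $|\D|$ is even (being $|\cdot|$ applied to $\D$).

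The main computation is then a graded-Leibniz manipulation on $\Dom\D$. For $w$ homogeneous, factoring $\D=\sgn(\D)|\D|$ yields $[\D,w]=\sgn(\D)[|\D|,w]+(-1)^{(\deg w)(\deg|\D|)}[\sgn(\D),w]|\D|$; the second term vanishes by hypothesis, leaving $[\D,w]=\sgn(\D)[|\D|,w]$ on $\Dom\D$. Applying the analogous calculation to $|\D|=\sgn(\D)\D$ gives the reverse identity $[|\D|,w]=\sgn(\D)[\D,w]$ on $\Dom\D$. For general $w$, decomposing into homogeneous components $w=w_0+w_1$ preserves the hypothesis (each $[\sgn(\D),w_i]=0$ follows from $[\sgn(\D),w]=0$ by matching parities), so both identities extend by linearity.

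The conclusion is then immediate: since $\sgn(\D)$ is a bounded adjointable involution, left multiplication by it is an isometric self-inverse of $\End^*_B(E)$, so $[\D,w]$ extends to a bounded adjointable operator if and only if $[|\D|,w]$ does. I do not foresee a serious obstacle here; the only care required is verifying that all domain manipulations are legitimate, which is automatic because $\sgn(\D)$ is bounded and commutes with $|\D|$ via functional calculus, and $w$ preserves the common domain $\Dom\D=\Dom|\D|$ by hypothesis. The essentially algebraic nature of the argument suggests the result will also be useful as a stepping stone in \cref{sec:spec_deg}, where spectrally degenerate cycles are shown to be examples of the Lipschitz-regular cycles discussed in \cref{sec:Lip_reg}.
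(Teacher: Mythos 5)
Your proof is correct and takes essentially the same approach as the paper: both rest on the factorisation $\D = \sgn(\D)|\D|$ with $\sgn(\D)$ a self-adjoint unitary, together with the observation that $[\sgn(\D),w]=0$ makes the Leibniz correction term vanish, yielding $[\D,w]=\sgn(\D)[|\D|,w]$ and $[|\D|,w]=\sgn(\D)[\D,w]$ on $\Dom\D$. Your version merely spells out the graded-Leibniz bookkeeping and the domain verifications that the paper leaves implicit.
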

\begin{proof}
This follows from the simple observation that $\sgn(\D)$ is a self-adjoint unitary and $\D=\sgn(\D)|D|$. We have
\[[\D,w]=\sgn(\D)[|\D|,w],\quad [|\D|,w]=\sgn(\D)[\D,w],\]
whence $[\D,w]$ is bounded if and only if $[|\D|,w]$ is bounded.
\end{proof}

We have already seen in \cref{lem:alg_deg_null} that any algebraically degenerate cycle is null-homotopic. 
Here, we shall prove that also any spectrally degenerate cycle $(E,\D)$ is null-homotopic. 
The easiest way to prove this is by observing that the bounded transform $(E,F_\D)$ is operator-homotopic to the degenerate cycle $(E,\sgn(\D))$ (which is null-homotopic), and then applying \cref{thm:lift_homotopy}. 
However, we can only apply \cref{thm:lift_homotopy} if $A$ is separable. But with only a bit more effort, we can in fact explicitly construct an \emph{unbounded} homotopy between any spectrally degenerate cycle and the zero module. 

\begin{prop}
\label{prop:deg_hom_0}
Any spectrally degenerate unbounded $A$-$B$-cycle $(E,\D)$ is null- \\ homotopic. 
\end{prop}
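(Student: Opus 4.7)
The plan is to construct an explicit homotopy from $(\pi,E,\D)$ to the zero cycle on the module $\til E := C_0([0,1), E)$, viewed as a Hilbert $C([0,1],B)$-module. Since elements of $\til E$ vanish at $t=1$, we have $\ev_0(\til E) = E$ and $\ev_1(\til E) = 0$, so any valid unbounded $A$-$C([0,1],B)$-cycle structure on $\til E$ will automatically yield a homotopy between $(\pi,E,\D)$ and $(0,0,0)$. For the operator, I set $V := \sgn(\D)$, which is an odd self-adjoint unitary by \cref{lem:odd_funct_calc}, and define the pointwise family $\D_t := \D + V\tan(\pi t/2)$ on $E$ for $t \in [0,1)$. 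Each $\D_t$ is a bounded odd perturbation of $\D$, hence regular and self-adjoint on $\Dom \D$; \cref{lem:fam_reg_sa} (with $X = [0,1]$, $Y = [0,1)$, and core $\E = \Dom \D$) then packages these into an odd regular self-adjoint operator $\til\D$ on $\til E$. For $\til\pi$ I take the constant action $\til\pi(a)\psi(t) := \pi(a)\psi(t)$.

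The essential algebraic input is that $V$ and $|\D|$ commute and satisfy $\D V = V\D = |\D|$ (all being functions of $\D$), which yields
\[
\D_t^2 = \D^2 + \tan(\pi t/2)(\D V + V\D) + \tan^2(\pi t/2)\,V^2 = (|\D| + \tan(\pi t/2))^2 ,
\]
so $(1+\D_t^2)^{-\frac12} = g_t(\D)$ with $g_t(x) := (1 + (|x| + \tan(\pi t/2))^2)^{-\frac12}$, and in particular $\|g_t\|_\infty \leq \cot(\pi t/2) \to 0$ as $t \to 1$. Given $w \in W$, let $T_w \in \End_{C([0,1],B)}^*(\til E)$ be the constant multiplier $T_w\psi(t) := w\psi(t)$. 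A direct computation of the graded commutator, using the spectral degeneracy $[V, w] = 0$, gives
\[
[\til\D, T_w]\psi(t) = [\D, w]\psi(t) + \tan(\pi t/2)\,[V, w]\psi(t) = [\D, w]\psi(t) ,
\]
so $[\til\D, T_w]$ extends to a bounded adjointable operator. Moreover, $T_w(1+\til\D^2)^{-\frac12}$ is pointwise equal to $w\,g_t(\D)$, a norm-continuous family of compact operators on $E$ vanishing at $t=1$, which defines a compact endomorphism of $\til E$ over $C([0,1],B)$. Hence $T_w \in \Lip^0(\til\D)$, and
\[
\til\pi(A) \subset \overline{\{T_w : w \in W\}} \subset \overline{\Lip^0(\til\D)} ,
\]
so $(\til\pi,\til E,\til\D)$ is an unbounded $A$-$C([0,1],B)$-cycle. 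Evaluating at the endpoints yields $\ev_0(\til\pi,\til E,\til\D) \simeq (\pi,E,\D)$ (since $\tan(0)=0$) and $\ev_1(\til\pi,\til E,\til\D) = (0,0,0)$, completing the null-homotopy.

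The main obstacle is producing a single self-adjoint regular operator on $\til E$ that simultaneously reduces to $\D$ at $t=0$, blows up fast enough as $t \to 1$ to make $(1+\til\D^2)^{-\frac12}$ vanish in norm there, and has bounded graded commutators with the multipliers $T_w$ for all $w \in W$. The choice $\D_t = \D + V\tan(\pi t/2)$ achieves this because the identities $\D V = V\D = |\D|$ and $V^2 = 1$ turn $\D_t^2$ into a clean spectral shift of $|\D|^2$, while the spectral degeneracy $[V,w]=0$ precisely kills the $\tan(\pi t/2)$ blow-up term that would otherwise appear in $[\til\D, T_w]$. Both uses of the hypothesis are essential, and dropping either one causes the construction to fail.
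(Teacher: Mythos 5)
Your proof is correct, and it takes a genuinely different — and simpler — route than the paper.

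The paper's proof uses the multiplicative dampening $\D_t := t^{-1}\sgn(\D)|\D|^t$ on $C_0((0,1],E)$, where $\D$ is shrunk to the bounded operator $\sgn(\D)$ near $t=0$ and then rescaled by $t^{-1}$ so that the resolvent vanishes. Showing that $[\D_t,w]$ is uniformly bounded in $t$ there requires a genuine estimate: an operator inequality for $\pm i[|\D|^{-t},w]$ obtained via the integral formula for fractional powers and \cite[Proposition 2.11]{Kuc00}, together with a bound on $[|\D|^t,w]$. Your construction replaces this entirely by the additive perturbation $\D_t := \D + \tan(\pi t/2)\sgn(\D)$ on $C_0([0,1),E)$, which exploits the two algebraic identities $\D\sgn(\D) = \sgn(\D)\D = |\D|$ and $[\sgn(\D),w]=0$: the first collapses $\D_t^2$ to the clean spectral shift $(|\D|+\tan(\pi t/2))^2$, which drives the resolvent to zero at $t=1$, and the second makes the graded commutator $[\D_t,w]$ literally constant, equal to $[\D,w]$, so that no uniform estimate is needed at all. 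In spirit your argument is the "blow-up version" of the paper's \cref{prop:spec_symm_bdd_pert_deg} (which observes that $\D+S$ is spectrally degenerate for any spectral symmetry $S$), and it buys a considerable saving in analysis: the only inputs are functional calculus and the boundedness of the commutators $[\D,w]$ already granted by the hypothesis.

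Two points you gloss over but which are easily supplied: one should also check that $T_w^*(1+\til\D^2)^{-\frac12}$ is compact, which is immediate from $T_w^*=T_{w^*}$ together with $w^*\in\Lip^0(\D)$ and $[\sgn(\D),w^*]=-[\sgn(\D),w]^*=0$; and the identification of $\End^0_{C([0,1],B)}(\til E)$ with $C_0([0,1),\End^0_B(E))$ deserves a reference or a sentence, though it is a standard consequence of $\til E\simeq C_0([0,1))\hotimes E$.
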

\begin{proof}
Consider for $t\in(0,1]$ the family of regular self-adjoint operators 
\[
\D_t := t^{-1} \sgn(\D) |\D|^t . 
\]
Since $t \mapsto |\D|^{t-1}$ is norm-continuous and $|\D|^t = |\D|^{t-1} |\D|$, we see that $|\D|^t$ is strongly continuous on $\Dom\D$. 
Since $\Dom\D$ is a core for $\D_t$ for each $t\in(0,1]$, we obtain from \cref{lem:fam_reg_sa} a regular self-adjoint operator $\til\D$ on the Hilbert $C([0,1],B)$-module $\til E := C_0((0,1],E)$. 
We claim that $(\til E,\til\D)$ is an unbounded cycle, and therefore it provides a homotopy between $\ev_1(\til E,\til\D) = (E,\D)$ and $\ev_0(\til E,\til\D) = (0,0)$. 

To prove the claim, choose $W\subset\Lip^0(\D)$ such that $\pi(A)\subset\overline{W}$ and  $[\sgn(\D),w] = 0$ for all $w\in W$. 
First consider the resolvents of $\D_t$. We compute
\begin{equation}
\label{eq:res_inv}
(\D_t\pm i)^{-1}=\mp it \sgn(\D)|\D|^{-t} \big(t\sgn(\D)|\D|^{-t} \mp i\big)^{-1} 
\end{equation}
Since $\D$ is invertible, the operators $w|\D|^{-t}$ are compact for $0<t\leq 1$ and for $w\in W$, and hence so are $w(\D_{t}\pm i)^{-1}$. 
Moreover, $t\mapsto |\D|^{-t}$ is norm-continuous on $(0,1]$, and therefore $t \mapsto t\sgn(\D)|\D|^{-t}$ is norm-continuous on $(0,1]$. 
But then the composition with $x\mapsto x(x\pm i)^{-1}$ gives again a continuous function, and we see from \cref{eq:res_inv} that $t \mapsto (\D_t\pm i)^{-1}$ is norm-continuous on $(0,1]$. 
Furthermore, since $|\D|^{-t}$ is uniformly bounded and $t\sgn(\D)|\D|^{-t}$ is self-adjoint, it also follows from \cref{eq:res_inv} that 
\[\lim_{t\searrow 0}\big\|(\D_t\pm i)^{-1}\big\|=\lim_{t\searrow 0}t \big\| \sgn(\D)|\D|^{-t} \big(t\sgn(\D)|\D|^{-t} \mp i\big)^{-1}\big\|=0,\]
so we also obtain continuity at $0$. Hence $w(\til\D\pm i)^{-1}$ is compact on $\til E$. 

Next, we consider the commutator $[\D_t,w] = t^{-1} \sgn(\D) [|\D|^t,w]$ for some self-adjoint $w \in W$. 
We have seen above that $|\D|^t$ is strongly continuous on $\Dom\D$, and hence also $[\D_t,w]$ is strongly continuous on $\Dom\D$. 
To show that $[\D_t,w]$ is strongly continuous everywhere, it then suffices to show that $[\D_t,w]$ is uniformly bounded. 
For this purpose, we consider the operator inequality
\[
\pm i \big[ |\D|^{-1},w \big] = \mp i |\D|^{-1} \big[ |\D|,w \big] |\D|^{-1} 
\leq \big\| \big[ |\D|,w \big] \big\| |\D|^{-2} ,
\]
where $[|\D|,w]$ is bounded by \cref{lem:spec_deg_Lip_reg}.
Applying \cite[Proposition 2.11]{Kuc00} to the function $f(x) := x^t$, we then find that
\begin{align*}
\pm i \big[ |\D|^{-t},w \big] 
&= \pm i \big[ f\big(|\D|^{-1}\big),w \big] 
\leq f'\big(|\D|^{-1}\big) \; \big\| \big[ |\D|,w \big] \big\| \; |\D|^{-2} \\
&= t |\D|^{1-t} \; \big\| \big[ |\D|,w \big] \big\| \; |\D|^{-2} 
= t \big\| \big[ |\D|,w \big] \big\| \; |\D|^{-1-t} . 
\end{align*}
For any $\psi\in\Dom\D$, we therefore have
\begin{align*}
\Big\la \psi \Bigmvert \pm i \big[ |\D|^{t},w \big] \psi \Big\ra 
&= \Big\la \psi \Bigmvert \mp i |\D|^t \big[ |\D|^{-t},w \big] |\D|^t \psi \Big\ra 
= \Big\la |\D|^t \psi \Bigmvert \mp i \big[ |\D|^{-t},w \big] |\D|^t \psi \Big\ra \\
&\leq \Big\la |\D|^t \psi \Bigmvert t \big\| \big[ |\D|,w \big] \big\| \; |\D|^{-1} \psi \Big\ra 
= \Big\la \psi \Bigmvert t \big\| \big[ |\D|,w \big] \big\| \; |\D|^{t-1} \psi \Big\ra .
\end{align*}
Since both $\|[|\D|,w]\| \, |\D|^{t-1} $ and $[|D|^{t},w]$ are bounded for $t\in [0,1]$ (for the latter, see for instance \cite[Lemma 10.17]{ElementsNCG}), we have the norm-inequality 
\[
\big\| \big[ |\D|^{t},w \big] \big\| =\big\| \pm i \big[ |\D|^{t},w \big] \big\| \leq t \big\| \big[ |\D|,w \big] \big\| \; \big\| |\D|^{t-1} \big\| \leq t \big\| \big[ |\D|,w \big] \big\| \; \max\big\{1,\big\| |\D|^{-1} \big\|\big\} .
\]
We finally obtain 
\begin{align*}
\big\| [\D_t,w] \big\| 
&\leq t^{-1} \big\| \sgn(\D) \big\| \; \big\| \big[ |\D|^{t},w \big] \big\| 
\leq \big\| \big[ |\D|,w \big] \big\| \; \max\big\{1,\big\| |\D|^{-1} \big\|\big\} .
\end{align*}
Hence $[\D_t,w]$ is uniformly bounded and strongly continuous as a function of $t\in(0,1]$, and therefore the commutator $[\til\D,w]$ is bounded on $\til E$. 
Thus we have shown that $W \subset \Lip^0(\til\D)$ and therefore $\til\pi(A) \subset \overline{W} \subset \overline{\Lip^0(\til\D)}$. 
\end{proof}

\subsection{Operator-homotopies modulo degenerate cycles}
\label{sec:op-hom_mod_null}

In bounded $\KK$-theory, it was shown by Kasparov that any homotopy can be implemented as an operator-homotopy modulo addition of degenerate modules \cite[\S6, Theorem 1]{Kas80}. %\cite[Theorem 18.5.3]{Blackadar98}. 
Using this result, we will prove that a similar statement holds in unbounded $\KK$-theory. 

Let $\sim_{oh+d}$ denote the equivalence relation on $\overline{\Psi}_1(A,B)$ given by operator-homotopies, unitary equivalences, and addition of spectrally degenerate and algebraically degenerate cycles. 
We already know from \cref{lem:alg_deg_null,prop:deg_hom_0} that degenerate cycles are null-homotopic, so $\sim_{oh+d}$ is stronger than $\sim_h$. 
We will prove here that in fact these two relations coincide. 

\begin{lem}
\label{lem:deg_lift}
Suppose $A$ is separable, and $B$ $\sigma$-unital. 
Let $(E,F)$ be a (bounded) Kasparov $A$-$B$-module, such that $F=F^*$, $F^2=1$, and $[F,a]=0$ for all $a\in A$ (in particular, $(E,F)$ is degenerate).
Let $\D := Fl^{-1}$ be a lift of $F$, where $l$ is a positive element in $J_F$ with dense range in $E$ obtained from \cref{prop:lift}. 
Then the unbounded $A$-$B$-cycle $(E,\D)$ is spectrally degenerate. 
\end{lem}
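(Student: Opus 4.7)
The plan is to read off the spectral degeneracy directly from the special structure of the lift produced by \cref{prop:lift} in the case $F^{2}=1$. Recall that in that case $J_F=\End_B^0(E)$, and the final clause of \cref{prop:lift} furnishes a strictly positive \emph{compact} operator $l\in C^{*}(h)\subset\End_B^0(E)$ with dense range, together with the crucial additional property that $[F,l]=0$. I would use this commutation throughout the proof.

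First I would compute $\D$ and $|\D|$ explicitly. Since $F$ and $l$ commute and $F$ is a self-adjoint unitary, $F$ preserves $\Ran l = \Dom l^{-1}$, and on this dense subspace $Fl^{-1}=l^{-1}F$. Consequently the symmetrisation $\tfrac12(Fl^{-1}+l^{-1}F)$ equals $Fl^{-1}=l^{-1}F$ on $\Ran l$, and its self-adjoint closure $\D$ satisfies $\D^{2}=l^{-2}$, whence $|\D|=l^{-1}$. Moreover the adjointable operator $\D^{-1}:=Fl=lF\in\End_B^*(E)$ is a genuine two-sided bounded inverse, so $\D$ is invertible in the sense of \cref{sec:spec_deg}. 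Using $[F,l]=0$ once more, we obtain
\[
\sgn(\D) \;=\; \D\,|\D|^{-1} \;=\; l^{-1}F\cdot l \;=\; F.
\]

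Spectral degeneracy is then immediate. I would take $W:=\A$, where $\A\subset A$ is the countable dense subset supplied by \cref{prop:lift}; that proposition guarantees $\A\subset\Lip^{0}(\D)$, and density gives $\pi(A)\subset\overline{W}$. The identity $\sgn(\D)=F$ combined with the hypothesis $[F,a]=0$ for all $a\in A$ yields $[\sgn(\D),w]=0$ for every $w\in W$, which are exactly the conditions defining a spectrally degenerate cycle.

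I do not anticipate any substantive obstacle: the entire argument is a direct unwinding of the lift in the special case $F^{2}=1$. The only care needed is the domain bookkeeping around the unbounded operator $l^{-1}$, but this is painless because $[F,l]=0$ forces $F$ to preserve $\Ran l$, so all formal identities $Fl^{-1}=l^{-1}F$, $\D^{2}=l^{-2}$, and $\sgn(\D)=F$ hold rigorously on the relevant cores.
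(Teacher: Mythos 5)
Your proof is correct and follows the same route as the paper's: exploit $F^{2}=1$ and $[F,l]=0$ from the final clause of \cref{prop:lift} to conclude $\D$ is invertible with $\sgn(\D)=F$, and then note that $F$ commutes with $\pi(A)$ by hypothesis. The paper states this in two lines; you have simply unwound the elementary computations ($\D^{2}=l^{-2}$, $|\D|=l^{-1}$, $\D^{-1}=lF$) and made the choice $W=\pi(\A)$ explicit.
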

\begin{proof}
Since $F^2=1$ and $[F,l]=0$, we have that $\D$ is invertible and that $\sgn(\D) = F$ (graded) commutes with the algebra $A$. Thus $(E,\D)$ is spectrally degenerate. 
\end{proof}

\begin{thm}
\label{thm:lift_op-hom_mod_null}
Suppose $A$ is separable, and $B$ $\sigma$-unital. 
Then the homotopy equivalence relation $\sim_h$ on $\overline{\Psi}_1(A,B)$ coincides with the equivalence relation $\sim_{oh+d}$. 
\end{thm}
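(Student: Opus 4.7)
The easy direction is that $\sim_{oh+d}$ implies $\sim_h$: operator-homotopies and unitary equivalences are already special homotopies, while \cref{lem:alg_deg_null,prop:deg_hom_0} show that algebraically and spectrally degenerate cycles are null-homotopic, so adding them does not change the homotopy class. The substance of the theorem is the converse, and my plan is to reduce any unbounded homotopy to an operator-homotopy by adding spectrally degenerate cycles, using Kasparov's analogous result in bounded $\KK$-theory \cite[\S6, Theorem 1]{Kas80} combined with the lifting result \cref{thm:lift_homotopy}.

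Given $(E_0,\D_0) \sim_h (E_1,\D_1)$, I would first take the bounded transform to obtain a homotopy between $(E_0,F_{\D_0})$ and $(E_1,F_{\D_1})$ in bounded $\KK$-theory. By Kasparov's theorem, there exist degenerate bounded Kasparov modules $(E_j',F_j')$ such that $(E_0 \oplus E_0', F_{\D_0} \oplus F_0')$ is operator-homotopic to $(E_1 \oplus E_1', F_{\D_1} \oplus F_1')$. A preliminary step, using standard reductions in bounded $\KK$-theory, would arrange $(F_j')^* = F_j'$, $(F_j')^2 = 1$, and $[F_j', a] = 0$ for all $a \in A$, so that each $(E_j',F_j')$ can be lifted through \cref{prop:lift} to an unbounded cycle $(E_j',\D_j')$ with $\D_j' = F_j' l_j^{-1}$, which is spectrally degenerate by \cref{lem:deg_lift} and such that $F_{\D_j'}$ is operator-homotopic to $F_j'$.

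Since the bounded transform commutes with direct sums, I can then splice the various operator-homotopies together to obtain an operator-homotopy in bounded $\KK$-theory between $(E_0 \oplus E_0', F_{\D_0 \oplus \D_0'})$ and $(E_1 \oplus E_1', F_{\D_1 \oplus \D_1'})$. Applying \cref{thm:lift_homotopy} — which preserves the operator-homotopy property — yields an unbounded operator-homotopy with endpoints unitarily equivalent to $(E_j \oplus E_j', f_j(\D_j \oplus \D_j'))$ for dampening functions $f_j$, together with countable self-adjoint subsets $W_j \subset \Lip^0(\D_j \oplus \D_j') \cap \Lip(f_j(\D_j \oplus \D_j'))$ densely spanning the representation of $A$. \cref{prop:functional_dampening} then provides operator-homotopies connecting $(E_j \oplus E_j', f_j(\D_j \oplus \D_j'))$ back to $(E_j, \D_j) \oplus (E_j', \D_j')$. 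Chaining everything gives
\[
(E_0,\D_0) \oplus (E_0',\D_0') \sim_{oh} (E_1,\D_1) \oplus (E_1',\D_1') ,
\]
and since each $(E_j',\D_j')$ is spectrally degenerate, this is precisely $(E_0,\D_0) \sim_{oh+d} (E_1,\D_1)$.

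The main obstacle will be arranging the degenerate modules from Kasparov's theorem to have the strong form $(F_j')^2 = 1$ and $[F_j',a] = 0$ required by \cref{lem:deg_lift}; the lift produced by \cref{prop:lift} is only spectrally degenerate under these additional hypotheses. Any degenerate module should be operator-homotopic, within the degenerate modules, to one of the form $F = 2P - 1$ for an $A$-invariant projection $P$, but this reduction must be performed carefully so as not to disturb the operator-homotopy produced by Kasparov's theorem, and one must also verify that the various bounded operator-homotopies can be concatenated cleanly with the direct-sum decompositions on the nose (as opposed to only up to unitary equivalence).
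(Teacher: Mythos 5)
Your overall strategy matches the paper's, and you correctly identify the crux: the degenerate modules $(E_j',F_j')$ produced by Kasparov's theorem need not satisfy $F_j'^2 = 1$ (degeneracy only gives $\pi_j'(a)(F_j'^2-1)=0$; note $[F_j',\pi_j'(a)]=0$ is automatic from the definition of degenerate, so that part of your ``preliminary step'' is already granted). You flag this as the main obstacle but do not supply the reduction, and the naive route of replacing $F_j'$ by something with $F_j'^2=1$ \emph{before} invoking Kasparov's theorem would indeed, as you worry, break the given operator-homotopy. That is a genuine gap.

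The paper's resolution avoids modifying the $F_j'$ altogether. Instead it \emph{adds} to both sides of the Kasparov operator-homotopy the algebraically degenerate modules $(0,E_j'^\op,-F_j')$ for $j=0,1$, extending the horizontal operator-homotopy by constant direct summands. Then, inside the class of degenerate bounded modules, $F_j'\oplus(-F_j')$ is operator-homotopic (the standard Cuntz/Blackadar rotation, cf.\ \cite[\S17.6]{Blackadar98}) to
\[
\hat{F_j'} := \mattwo{F_j'}{(1-(F_j')^2)^{\frac12}}{(1-(F_j')^2)^{\frac12}}{-F_j'} ,
\]
which satisfies $(\hat{F_j'})^2 = 1$ and $[\hat{F_j'},\pi_j'(a)]=0$, hence lifts via \cref{prop:lift} and \cref{lem:deg_lift} to a \emph{spectrally degenerate} unbounded cycle $\hat\D_j'$; the leftover $-F_{1-j}'$ carries the zero representation and lifts via \cref{prop:lift} to an \emph{algebraically degenerate} unbounded cycle $-\D_{1-j}'$. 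Chaining these vertical operator-homotopies with the horizontal one, and then applying \cref{thm:lift_homotopy} (together with \cref{prop:functional_dampening} for the endpoints), one lands on an unbounded operator-homotopy
\[
\D_0\oplus\hat\D_0'\oplus(-\D_1') \;\sim_{oh}\; \D_1\oplus\hat\D_1'\oplus(-\D_0') ,
\]
which is exactly $\sim_{oh+d}$. Your sketch ends instead at $(E_0,\D_0)\oplus(E_0',\D_0')\sim_{oh}(E_1,\D_1)\oplus(E_1',\D_1')$, which presupposes the reduction you left unproven; the doubling-by-$-F_j'$ step is the missing ingredient that makes everything lift cleanly without touching Kasparov's operator-homotopy.
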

\begin{proof}
We need to prove that the relation $\sim_{oh+d}$ is weaker than $\sim_h$. 
To this end let $(E_0,\D_0)$ and $(E_1,\D_1)$ be unbounded $A$-$B$-cycles which are homotopic. 
We then know that the bounded transforms $(\pi_0,E_0,F_{\D_0})$ and $(\pi_1,E_1,F_{\D_1})$ are also homotopic. 
By \cite[\S6, Theorem 1]{Kas80}, there exist degenerate bounded Kasparov modules $(\pi_0',E_0',F_0')$ and $(\pi_1',E_1',F_1')$ such that $(\pi_0\oplus\pi_0',E_0\oplus E_0',F_{\D_0}\oplus F_0')$ is operator-homotopic to $(\pi_1\oplus\pi_1',E_1\oplus E_1',F_{\D_1}\oplus F_1')$. 
Denote by $E_j'^\op$ the Hilbert $B$-module $E_j'$ equipped with the opposite $\Z_2$-grading. 
By adding the algebraically degenerate module $(0,E_0'^\op,-F_0') \oplus (0,E_1'^\op,-F_1')$, we obtain the top line in the following diagram: 
\begin{align}
\label{eq:bdd_op-hom}
\xymatrix{
F_{\D_0} \oplus F_0' \oplus -F_0' \oplus -F_1' \ar@{~}[r]^-{oh} \ar@{~}[d]^-{oh} & F_{\D_1} \oplus F_1' \oplus -F_0' \oplus -F_1' \ar@{~}[d]^-{oh} \\
F_{\D_0} \oplus \hat{F_0'} \oplus -F_1' \ar@{~}[d]^-{oh} & F_{\D_1} \oplus \hat{F_1'} \oplus -F_0' \ar@{~}[d]^-{oh} \\
F_{\D_0} \oplus F_{\hat\D'_{0}} \oplus -F_{\D_1'} \ar@{~}[r]^-{oh} & F_{\D_1} \oplus F_{\hat\D'_{1}} \oplus -F_{\D_0'} \\
}
\end{align}
Since $F_j'$ is degenerate, we know for all $a\in A$ that $[F_j',\pi_j'(a)] = \pi_j'(a) (1-(F_j')^2) = 0$. 
As in \cite[\S17.6]{Blackadar98}, $\big(\pi_j'\oplus0,E_j'\oplus E_j'^\op,F_j'\oplus-F_j'\big)$ is operator-homotopic to the degenerate module 
\begin{align*}
&\big( \pi_j'\oplus0 , E_j'\oplus E_j'^\op , \hat{F_j'} \big) , & 
\hat{F_j'} &:= \mattwo{F_j'}{(1-(F_j')^2)^{\frac12}}{(1-(F_j')^2)^{\frac12}}{-F_j'} . 
\end{align*}
This yields the vertical operator-homotopies between the first two lines in \cref{eq:bdd_op-hom}. 

By construction, $(\hat{F_j'})^2 = 1$ and $[\hat{F_j'},\pi_j'(a)] = 0$. 
Hence by \cref{lem:deg_lift,prop:lift} we can lift $\hat{F_j'}$ to spectrally degenerate unbounded cycles $(\pi_j'\oplus0,E'_{j}\oplus E_j'^\op,\hat\D'_{j})$, such that $\hat{F_j'} \sim_{oh} F_{\hat\D'_{j}}$. 
Moreover, using again \cref{prop:lift}, we can lift $-F_j'$ to algebraically degenerate unbounded cycles $(0,E_j'^\op,-\D_j')$ such that $-F_j' \sim_{oh} -F_{\D_j'}$. 
This yields the vertical operator-homotopies between the last two lines in \cref{eq:bdd_op-hom}. 
Finally, by transitivity we obtain the horizontal operator-homotopy on the bottom line, and by \cref{thm:lift_homotopy} this operator-homotopy lifts to an unbounded operator-homotopy
\begin{align*}
&\xymatrix{
\D_0 \oplus \hat\D'_0 \oplus -\D'_1 \ar@{~}[r]^-{oh} & \D_1 \oplus \hat\D'_1 \oplus -\D'_0 . 
}
\end{align*}
Thus we have shown that $(E_0,\D_0) \sim_{oh+d} (E_1,\D_1)$. 
\end{proof}

\section{Symmetries and the group structure}
\label{sec:symmetries}

In this section we discuss various notions of symmetries for unbounded cycles. The presence of such symmetries induces homotopical triviality and can be used to give a direct proof of the fact that the semigroup $\overline{\UKK}(A,B)$ is a group for any two $\sigma$-unital $C^{*}$-algebras.

\subsection{Lipschitz regularity}
\label{sec:Lip_reg}

Let $0<\alpha<1$ and $f_\alpha\in C_0(\R)$ a function that behaves like $x^\alpha$ towards infinity. 
We will show here that we can use the functional dampening of \cref{prop:functional_dampening} to replace any unbounded cycle $(E,\D)$ by a \emph{Lipschitz regular} cycle $(E,f_\alpha(\D))$. 

\begin{defn}
\label{defn:Lip_reg}
An unbounded $A$-$B$-cycle $(\pi,E,\D)$ is called \emph{Lipschitz regular} if $\pi(A) \subset \overline{\Lip^0(\D)\cap\Lip(|\D|)}$. 
\end{defn}

\begin{remark}
Since the map $x \mapsto |x| - (1+x^2)^{\frac12}$ lies in $C_0(\R)$, we have for $T\in\End_B^*(E)$ that $[|\D|,T]$ is bounded if and only if $[(1+\D^2)^{\frac12},T]$ is bounded, and therefore $\Lip(|\D|) = \Lip((1+\D^2)^{\frac12})$. 
\end{remark}

The following result generalises \cite[Proposition 5.1]{Kaa19pre}, where the specific function $x \mapsto x(1+x^2)^{\frac{\alpha-1}2}$ was considered. 
\begin{prop}
\label{prop:power-damp_Lip_reg}
Let $(E,\D)$ be an unbounded cycle, $0<\alpha<1$, and let $f_\alpha\colon\R\to\R$ be any odd continuous function such that $\lim_{x\to\infty} f_\alpha(x) - x^\alpha$ exists. 
Then $(E,f_\alpha(\D))$ defines a Lipschitz regular unbounded cycle that is operator homotopic to $(E,\D)$.
\end{prop}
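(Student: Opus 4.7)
The plan is to apply \cref{prop:functional_dampening} with $W := \Lip^0(\D)$ to obtain the operator-homotopy, and then verify Lipschitz regularity directly from the asymptotic behaviour of $f_\alpha$. Since $f_\alpha$ is odd continuous and $f_\alpha(x) - x^\alpha$ converges as $x \to +\infty$, we have $|f_\alpha(x)| \to \infty$ and $f_\alpha(x)(1+x^2)^{-\frac12} \to 0$, so $f_\alpha$ is a dampening function. I decompose
\[
f_\alpha(x) = b_\alpha(x) + g(x), \qquad b_\alpha(x) := x(1+x^2)^{\frac{\alpha-1}{2}} ,
\]
and note that $g \in C_b(\R)$ is odd (since $b_\alpha(x) - x^\alpha = O(x^{\alpha-2})$ while $f_\alpha(x) - x^\alpha$ converges), so that $g(\D)$ is bounded. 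The inclusion $\Lip^0(\D) \subset \Lip(f_\alpha(\D))$ therefore reduces to boundedness of $[b_\alpha(\D), T]$ for $T \in \Lip(\D)$.

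For this commutator bound I use the integral representation
\[
(1+x^2)^{-s} = \frac{\sin(\pi s)}{\pi} \int_0^\infty \mu^{-s}(\mu+1+x^2)^{-1} \, d\mu , \qquad s := \tfrac{1-\alpha}{2} \in (0, \tfrac12) ,
\]
so that $b_\alpha(\D) = \D(1+\D^2)^{-s}$ is presented as a norm-convergent integral of expressions $\D(\mu+1+\D^2)^{-1}$. Expanding the commutator via the resolvent identity $[A^{-1},T] = -A^{-1}[A,T]A^{-1}$ applied to $A := \mu+1+\D^2$, and using the uniform bounds $\|(\mu+1+\D^2)^{-1}\| \leq (1+\mu)^{-1}$ and $\|\D(\mu+1+\D^2)^{-1}\| \leq \tfrac12(1+\mu)^{-\frac12}$, one splits the $\mu$-integral at $\mu=1$ and obtains norm-convergence of the commutator integral (analogously to the estimates in the proof of \cref{prop:functional_dampening}). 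Then \cref{prop:functional_dampening} produces the operator-homotopy $(E,\D) \sim_{oh} (E, f_\alpha(\D))$.

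For Lipschitz regularity, I observe that $|f_\alpha(x)| - (1+x^2)^{\alpha/2}$ is continuous and bounded on $\R$ (both terms grow like $|x|^\alpha$ at infinity with $O(1)$ difference), so $|f_\alpha(\D)| = (1+\D^2)^{\alpha/2} + K$ for a bounded self-adjoint $K$. Thus boundedness of $[|f_\alpha(\D)|, T]$ for $T \in \Lip(\D)$ reduces to that of $[(1+\D^2)^{\alpha/2}, T]$, which follows from the same integral formula argument (or directly from \cite[Proposition 2.11]{Kuc00}, already invoked in the proof of \cref{prop:deg_hom_0}). Since $(1+f_\alpha(\cdot)^2)^{-\frac12} \in C_0(\R)$, approximation by polynomials in $(1+\D^2)^{-1}$ gives $T(1+f_\alpha(\D)^2)^{-\frac12} \in \End_B^0(E)$ whenever $T \in \Lip^0(\D)$. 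Hence $\Lip^0(\D) \subset \Lip^0(f_\alpha(\D)) \cap \Lip(|f_\alpha(\D)|)$ and therefore $\pi(A) \subset \overline{\Lip^0(f_\alpha(\D)) \cap \Lip(|f_\alpha(\D)|)}$. The main obstacle throughout is the fractional-power commutator bound; once this is in place, the rest is bookkeeping using the asymptotic decomposition of $f_\alpha$.
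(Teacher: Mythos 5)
Your proof is correct and follows the paper's strategy essentially verbatim: reduce to the model function $x(1+x^2)^{\frac{\alpha-1}{2}}$ (and to $(1+x^2)^{\frac{\alpha}{2}}$ for the modulus) by subtracting a bounded function, estimate the fractional-power commutators via the integral representation for $(1+\D^2)^{-s}$, and then invoke \cref{prop:functional_dampening}; the paper simply cites \cite[Proposition 5.1]{Kaa19pre} for the estimate you carry out by hand. The only small caveat is your parenthetical suggestion to invoke \cite[Proposition 2.11]{Kuc00} directly: as used in \cref{prop:deg_hom_0} that argument starts from a known commutator bound on $|\D|$ itself, whereas here the boundedness of $[(1+\D^2)^{\frac12},T]$ is precisely what is not yet available, so the integral-formula route you give as the primary argument is the one that actually works.
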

\begin{proof}
We will show that $\Lip(\D) \subset \Lip(f_\alpha(\D)) \cap \Lip(|f_\alpha(\D)|)$, and the statement then follows from \cref{prop:functional_dampening}. 
Given two such functions $f_\alpha$ and $g_\alpha$, both $f_\alpha-g_\alpha$ and $|f_\alpha|-|g_\alpha|$ lie in $C_b(\R)$. 
Thus $f_\alpha(\D) - g_\alpha(\D)$ and $|f_\alpha(\D)| - |g_\alpha(\D)|$ are bounded operators, and we see that $\Lip(f_\alpha(\D)) = \Lip(g_\alpha(\D))$ and $\Lip(|f_\alpha(\D)|) = \Lip(|g_\alpha(\D)|)$. 
Hence it suffices to prove the statement for $f_\alpha(x) := x (1+x^{2})^{\frac{\alpha-1}{2}}$. 
Using for $s\in(0,1)$ the integral formula (which can be derived from \cref{eq:integral_formula_powers} by taking $T = (1+\D^2)^{-1}$)
\[
(1+\D^2)^{-s} = \frac{\sin(\pi s)}{\pi} \int_0^\infty \lambda^{-s} (1+\lambda+\D^2)^{-1} d\lambda ,
\]
it is shown in the proof of \cite[Proposition 5.1]{Kaa19pre} that $[(1+\D^{2})^{\frac{\alpha-1}{2}},T]\D$ extends to a bounded operator for each $T\in\Lip(\D)$. 
Hence $\Lip(\D) \subset \Lip(f_\alpha(\D))$. 

To prove the Lipschitz regularity, we consider instead the function $g_\alpha(x) := \sgn(x) (1+x^2)^{\frac\alpha2}$. 
Using again the above integral formula, one can show similarly that 
\[
\big[ |g_\alpha(\D)| , T \big] = \big[ (1+\D^2)^{\frac\alpha2} , T \big] = - (1+\D^2)^{\frac\alpha2} \big[ (1+\D^2)^{-\frac\alpha2} , T \big] (1+\D^2)^{\frac\alpha2} 
\]
is indeed bounded for each $T\in\Lip(\D)$, and therefore $\Lip(\D) \subset \Lip(|g_\alpha(\D)|)$. 
\end{proof}

\begin{remark}
\label{remark:sgnmod}
In addition to the functions $x \mapsto x(1+x^2)^{\frac{\alpha-1}2}$ and $x \mapsto \sgn(x) (1+x^2)^{\frac\alpha2}$ considered in the proof of \cref{prop:power-damp_Lip_reg}, another typical example of a function $f_\alpha$ as in \cref{prop:power-damp_Lip_reg} is the function $\sgnmod^{\alpha}:\mathbb{R}\to \mathbb{R}$ given by $x\mapsto \sgn(x)|x|^{\alpha}$. 
Note that if $\D$ is invertible, then $\sgnmod^{\alpha}(\D)=\sgn(\D)|\D|^{\alpha} = \D |\D|^{\alpha-1}$.
\end{remark}

\begin{remark}
Recall from \cref{remark:higher-order} the function $\sgnlog(x) := \sgn(x) \log(1+|x|)$. 
In \cite[Theorem 1.16]{GMR19pre}, it is proved that the transformation $\D\mapsto\sgnlog(\D)$ turns Lipschitz regular \emph{twisted} unbounded Kasparov modules into ordinary unbounded Kasparov modules. Incorporating this `untwisting' procedure into the homotopy framework using \cref{prop:functional_dampening} is of interest in the study of twisted local index formulae. This is beyond the scope of the present paper.
\end{remark}

\subsection{Spectral symmetries}
\label{sec:spec_symm}

\begin{defn}
An unbounded $A$-$B$-cycle $(E,\D)$ is called 
\begin{itemize}
\item \emph{spectrally symmetric} if there exist an odd self-adjoint unitary $S$ on $E$ and a $W\subset\Lip^0(\D)$ such that $\pi(A)\subset\overline{W}$,  $[S,w]=0$ for all $w\in W$, $S\colon \Dom\D\to\Dom\D$, $\D S - S\D = 0$, and $S\D$ is positive; 
\item \emph{spectrally decomposable} if there exists a spectral symmetry $S$ such that both $(S\pm1)\D$ are positive. 
\end{itemize}
\end{defn}
The definition of spectrally decomposable cycle is adapted from \cite[Definition 4.1]{Kaa19pre} (where it is phrased in terms of the projection $P=\frac12(1+S)$). 
By definition, every spectrally decomposable cycle is also spectrally symmetric. 
Moreover, any spectrally degenerate cycle $(E,\D)$ is clearly spectrally decomposable (hence spectrally symmetric) with spectral symmetry $\sgn(\D)$. 

Spectrally symmetric cycles are actually not much more general than spectrally degenerate cycles. 
Indeed, the following lemma shows that any spectral symmetry $S$ more or less acts like $\sgn(\D)$ (except that $\D$ may not be invertible, so there could be some freedom in how $S$ acts on $\Ker\D$). 

\begin{lem}
\label{lem:spec_symm_almost_deg}
Let $(E,\D)$ be an unbounded $A$-$B$-cycle with spectral symmetry $S$. Then $\D = S|\D|$, and $(E,\D)$ is Lipschitz regular. 
\end{lem}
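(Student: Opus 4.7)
The plan is to identify $S\D$ with $|\D|$ via the uniqueness of positive square roots, which immediately yields $\D = S|\D|$, and then to transport the Lipschitz condition from $\D$ to $|\D|$ using the commutation of $S$ with elements of $W$.

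First I would establish the operator identity $(S\D)^2 = \D^2$. Since $S$ preserves $\Dom \D$ and satisfies $\D S = S\D$ on this domain, we have on the core $\Dom \D^2$ the pointwise computation $(S\D)(S\D)\psi = S(\D S)\D\psi = S(S\D)\D\psi = S^2\D^2\psi = \D^2\psi$. Taking closures gives the equality as regular self-adjoint operators. Since $S\D$ is assumed positive (hence regular self-adjoint) and every positive regular self-adjoint operator is the unique positive square root of its square, the identity $(S\D)^2 = \D^2 = |\D|^2$ forces $S\D = |\D|$. Multiplying on the left by $S$ and using $S^2 = \Id$ then yields $\D = S|\D|$.

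For Lipschitz regularity, pick $w \in W$. Note that $|\D| = S\D$ is even of degree zero, being the product of two odd operators, so $\Dom|\D| = \Dom\D$ is preserved by $w$, and the graded commutator with $|\D|$ coincides with the ordinary commutator. Using the graded commutation $Sw = (-1)^{\deg w} wS$ (which is the content of $[S,w] = 0$ for homogeneous $w$, extended by linearity), we compute on $\Dom\D$:
\[
[|\D|,w] \;=\; S\D w - wS\D \;=\; S\D w - (-1)^{\deg w} Sw\D \;=\; S[\D,w],
\]
which is bounded since $w \in \Lip(\D)$. Thus $w \in \Lip(|\D|)$, so $W \subset \Lip^0(\D)\cap\Lip(|\D|)$, and taking closures gives $\pi(A) \subset \overline{\Lip^0(\D)\cap\Lip(|\D|)}$, establishing Lipschitz regularity.

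The main technical delicacy is in justifying $(S\D)^2 = \D^2$ at the level of regular self-adjoint operators rather than merely on the dense domain $\Dom\D^2$; this reduces to $\Dom\D^2$ being a core for both operators, which is standard. Once this is in hand, uniqueness of the positive square root and the graded-commutator computation are essentially formal.
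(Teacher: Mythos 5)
Your proof is correct, and it takes a genuinely different route from the paper's for the key identity $|\D| = S\D$. The paper decomposes $\D$ and $S$ in off-diagonal block form on $E = E_+ \oplus E_-$, writes $S = \bigl(\begin{smallmatrix}0 & U^* \\ U & 0\end{smallmatrix}\bigr)$ with $U$ unitary, and reads off $|\D|$ directly as the diagonal matrix $\bigl(\begin{smallmatrix}U^*\D_+ & 0 \\ 0 & U\D_-\end{smallmatrix}\bigr) = S\D$ after observing these blocks are positive. Your argument instead appeals to uniqueness of the positive square root of the regular self-adjoint operator $\D^2$: you verify $(S\D)^2 = \D^2$ (with domain equality following because $S$ is an invertible bounded operator preserving $\Dom\D$, so $\Dom((S\D)^2) = \Dom\D^2$ literally, not merely as a core), and conclude $S\D = |\D|$. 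Your approach is shorter and more abstract; it does lean on the uniqueness of positive square roots for regular self-adjoint operators on Hilbert $C^*$-modules, which is standard via the continuous functional calculus but is a bigger black box than the paper's bare-hands block computation. For the Lipschitz regularity statement, the two arguments are essentially identical: both express $[|\D|,w]$ as $S[\D,w]$ using the graded commutation of $w$ with $S$ (the paper delegates this to the computation in its \cref{lem:spec_deg_Lip_reg} with $S$ in place of $\sgn(\D)$).
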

\begin{proof}
On the $\Z_2$-graded module $E = E_+\oplus E_-$ we can write
\begin{align*}
\D &= \mattwo{0}{\D_-}{\D_+}{0} , & 
S &= \mattwo{0}{U^*}{U}{0} , 
\end{align*}
where $U\colon E_+\to E_-$ is unitary. 
Since $\D S=S\D$, we see that $U \D_- = \D_+ U^*$. 
We then compute
\[
\D^2 = \mattwo{\D_-\D_+}{0}{0}{\D_+\D_-} = \mattwo{U^*\D_+U^*\D_+}{0}{0}{U\D_-U\D_-} . 
\]
Since $S\D$ is positive, we know that $U^*\D_+$ and $U\D_-$ are positive, and we obtain
\begin{align*}
|\D| &= \mattwo{U^*\D_+}{0}{0}{U\D_-} = S\D .
\end{align*}
As in \cref{lem:spec_deg_Lip_reg}, it then follows that $\Lip(|\D|) = \Lip(\D)$, so in particular $(E,\D)$ is Lipschitz regular. 
\end{proof}

Furthermore, the next proposition shows that any spectrally symmetric cycle is in fact just a bounded perturbation of a spectrally degenerate cycle. 

\begin{prop}
\label{prop:spec_symm_bdd_pert_deg}
Let $(E,\D)$ be an unbounded  $A$-$B$-cycle with spectral symmetry $S$. Then $(E,\D+S)$ is a spectrally degenerate unbounded $A$-$B$-cycle. 
\end{prop}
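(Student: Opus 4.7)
The plan is to show that adding $S$ to $\D$ produces an invertible operator whose sign is just $S$ itself, which then commutes with the subset $W$ already witnessing spectral symmetry.

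First, since $S\in\End^*_B(E)$ is bounded and self-adjoint, $\D+S$ is odd, regular and self-adjoint on $\Dom(\D+S)=\Dom \D$ (bounded self-adjoint perturbations preserve regularity on Hilbert $C^*$-modules). Using the assumption $\D S = S\D$ together with $S^2=1$ and the identity $S\D = |\D|$ from \cref{lem:spec_symm_almost_deg}, I would compute, on the core $\Dom\D^2$,
\begin{align*}
(\D+S)^2 &= \D^2 + \D S + S\D + S^2 = \D^2 + 2|\D| + 1 = (|\D|+1)^2 .
\end{align*}
Thus $|\D+S| = |\D|+1 \geq 1$, so $\D+S$ is invertible with $\|(\D+S)^{-1}\|\leq 1$. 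Moreover, writing $\D = S|\D|$, I get $\D+S = S(|\D|+1) = (|\D|+1)S$, and therefore
\[
\sgn(\D+S) = (\D+S)|\D+S|^{-1} = S(|\D|+1)(|\D|+1)^{-1} = S .
\]

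Next I would verify that $W\subset\Lip^0(\D+S)$. Since each $w\in W$ preserves $\Dom\D=\Dom(\D+S)$ and $[S,w]=0$, the commutator $[\D+S,w] = [\D,w]$ is bounded, so $w\in\Lip(\D+S)$. For the compactness of resolvents, note that $(1+(\D+S)^2)^{-\frac12} = (1+(|\D|+1)^2)^{-\frac12} = h(\D)(1+\D^2)^{-\frac12}$ where $h(x):=(1+x^2)^{\frac12}(1+(|x|+1)^2)^{-\frac12}$ lies in $C_b(\R)$. Hence $w(1+(\D+S)^2)^{-\frac12}$ is the composition of the compact operator $w(1+\D^2)^{-\frac12}$ with a bounded operator, hence compact; the adjoint case is analogous. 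Consequently $W\subset\Lip^0(\D+S)$, and since $\pi(A)\subset\overline{W}$, the triple $(E,\D+S)$ is an unbounded $A$-$B$-cycle.

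Finally, since $\sgn(\D+S)=S$ and $[S,w]=0$ for all $w\in W$, the witness $W$ certifies that $(E,\D+S)$ is spectrally degenerate. The only mildly delicate point in the plan is the squaring computation on a suitable core (to justify the clean identity $(\D+S)^2=(|\D|+1)^2$ before extending by self-adjointness), but this is straightforward given that $\D S=S\D$ on $\Dom\D$ and $S$ preserves $\Dom\D$.
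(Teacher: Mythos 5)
Your proof is correct and follows essentially the same route as the paper: identify $(\D+S)^2 = (1+S\D)^2$ (you phrase this as $(|\D|+1)^2$ via \cref{lem:spec_symm_almost_deg}, which is equivalent since $S\D=|\D|$), deduce invertibility and $\sgn(\D+S)=S$, and conclude degeneracy. You add a slightly more explicit verification that $W\subset\Lip^0(\D+S)$, which the paper treats as automatic for bounded self-adjoint odd perturbations, but the substance and structure of the argument coincide.
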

\begin{proof}
Since $S$ is bounded, self-adjoint, and odd, we know that $(E,\D+S)$ is again an unbounded $A$-$B$-cycle. 
Furthermore, since $(\D+S)^2 = \D^2 + 1 + 2S\D$ is positive and invertible, we know that also $\D+S$ is invertible. 
Moreover, noting that $(\D+S)^2 = (1+S\D)^2$ and that $1+S\D$ is positive, we see that $|\D+S| = 1+S\D$. 
Hence we find that 
\[
\sgn(\D+S) = (\D+S) |\D+S|^{-1} = S (S\D+1) (1+S\D)^{-1} = S , 
\]
and we conclude that $(E,\D+S)$ is degenerate. 
\end{proof}

In \cite[Definition 4.8]{Kaa19pre}, the notion of spectrally decomposable module was used to define the equivalence relation of `stable homotopy' for unbounded Kasparov modules (i.e.\ homotopies modulo addition of spectrally decomposable modules). Here, we point out that in fact any spectrally symmetric cycle $(E,\D)$ is null-homotopic. 
If $A$ is separable, this follows from \cref{thm:lift_homotopy} by observing that, if $S$ is a spectral symmetry of $(E,\D)$, then the bounded transform $(E,F_\D)$ is operator-homotopic to the degenerate cycle $(E,S)$ (since $[F_\D,S] = 2 S F_\D$ is positive, cf.\ \cite[Proposition 17.2.7]{Blackadar98}). 
In general, we simply combine \cref{prop:spec_symm_bdd_pert_deg,prop:deg_hom_0} to obtain: 

\begin{coro}
\label{coro:spec_symm_deg_0}
Any spectrally symmetric unbounded  $A$-$B$-cycle is null-homo\-topic. 
Consequently, the relation of stable homotopy equivalence of \cite[Definition 4.8]{Kaa19pre} coincides with the relation $\sim_{h}$ of homotopy equivalence.
\end{coro}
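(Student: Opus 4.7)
The plan is to combine Proposition \ref{prop:spec_symm_bdd_pert_deg}, which promotes any spectrally symmetric cycle to a spectrally degenerate one via the bounded perturbation $\D \mapsto \D + S$, with Proposition \ref{prop:deg_hom_0}, which says that spectrally degenerate cycles are null-homotopic. The missing ingredient is an operator-homotopy between $(\pi, E, \D)$ and $(\pi, E, \D + S)$; concatenating it with the homotopy provided by Proposition \ref{prop:deg_hom_0} applied to $(\pi, E, \D + S)$ will then exhibit $(\pi, E, \D)$ as null-homotopic.

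To construct this bridging homotopy, I would use the straight-line family $\D_t := \D + tS$ for $t \in [0,1]$. Since $S$ is odd, bounded, self-adjoint, and preserves $\Dom \D$, each $\D_t$ is an odd regular self-adjoint operator on $E$ with domain $\Dom \D$, and $t \mapsto \D_t \psi$ is norm-continuous for $\psi \in \Dom \D$. For any $w \in W$, the defining property $[S, w] = 0$ of a spectral symmetry gives $[\D_t, w] = [\D, w]$, which is bounded and constant in $t$. The standard resolvent identity
\[
(\D_t \pm i)^{-1} = (\D \pm i)^{-1} - t\,(\D \pm i)^{-1} S\,(\D_t \pm i)^{-1}
\]
shows that $t \mapsto (\D_t \pm i)^{-1}$ is norm-continuous on $[0,1]$ and that $w(\D_t \pm i)^{-1}$ is compact for every $t$, since $w(\D \pm i)^{-1} \in \End_B^0(E)$. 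Applying Lemma \ref{lem:fam_reg_sa} with $X = Y = [0,1]$, the family assembles into an odd regular self-adjoint operator $\til \D$ on $C([0,1], E)$, and the uniform bounds together with the norm-continuity of $t \mapsto (\D_t \pm i)^{-1}$ confirm that the constant section $w \otimes 1$ lies in $\Lip^0(\til\D)$ for each $w \in W$. Hence $\pi(A) \otimes 1 \subset \overline{\Lip^0(\til\D)}$, and $(C([0,1], E), \til\D)$ is a genuine operator-homotopy in $\overline{\Psi}_1\bigl(A, C([0,1], B)\bigr)$ with endpoints $\D$ and $\D + S$.

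For the second assertion, stable homotopy equivalence in the sense of \cite{Kaa19pre} is generated by $\sim_h$ together with addition of spectrally decomposable cycles. Every spectrally decomposable cycle is in particular spectrally symmetric, and therefore null-homotopic by the first part; so adding such a cycle to any representative does not alter its $\sim_h$-class. Consequently, stable homotopy equivalence refines no further than $\sim_h$, and the two relations coincide. The main (minor) technical obstacle is to check that the straight-line family genuinely defines a cycle over $C([0,1], B)$ and not merely a pointwise family of cycles; here the precise algebraic hypothesis $[S, w] = 0$ built into the notion of spectral symmetry is exactly what enforces uniformity of the commutators, while the resolvent identity above controls the resolvents in norm.
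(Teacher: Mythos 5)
Your proposal is correct and follows essentially the same route as the paper: combining \cref{prop:spec_symm_bdd_pert_deg} with \cref{prop:deg_hom_0}. The paper states the corollary as an immediate combination of these two propositions, leaving implicit the step that the bounded perturbation $\D \mapsto \D + S$ preserves the homotopy class; you make this explicit via the straight-line operator-homotopy $\D_t = \D + tS$ (the paper's unnamed proposition on locally bounded perturbations in \cref{sec:alg_deg} would serve the same purpose), and your verification of the compactness and Lipschitz conditions along the family is sound.
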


In \cite[Theorem 7.1]{Kaa19pre} it was shown that, for any countable dense $*$-subalgebra $\A\subset A$, the stable homotopy equivalence classes of elements in $\Psi_1(\A,B)$ form a group which is isomorphic to $\KK(A,B)$. In particular, this group is independent of the choice of $\A$. 
We emphasise here that \cref{coro:spec_symm_deg_0}, combined with \cite[Theorem 7.1]{Kaa19pre}, then gives a second independent proof of the isomorphism $\overline{\UKK}(A,B) \simeq \KK(A,B)$ from \cref{thm:bdd_transform_KK_isom}. 

As a further application of \cref{coro:spec_symm_deg_0}, the following proposition (adapted from the results of \cite{Kaa19pre}) gives a criterion that ensures that two given unbounded cycles are homotopic. 

\begin{prop}[cf.\ {\cite[Proposition 6.2]{Kaa19pre}}]
\label{prop:equal_spec_symm}
Let $(\pi,E,\D)$ and $(\pi,E,\D')$ be unbounded $A$-$B$-cycles such that $\pi(A) \subset \overline{\Lip^0(\D)\cap\Lip^0(\D')}$. Suppose there exists an odd self-adjoint unitary $F\colon E\to E$ such that $F$ commutes with both $\D$ and $\D'$, and such that we have the equalities $F\D = |\D|$ and $F\D' = |\D'|$. 
Then $(E,\D)$ is homotopic to $(E,\D')$. 
\end{prop}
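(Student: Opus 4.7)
The strategy is to construct the straight-line homotopy $\D_{t} := (1-t)\D + t\D'$, which can be rewritten using the common symmetry as $\D_{t} = F\bigl((1-t)|\D| + t|\D'|\bigr)$. Since $F$ commutes with both $\D$ and $\D'$, it also commutes with $|\D|$ and $|\D'|$, so the positive operator $|\D|_{t} := (1-t)|\D| + t|\D'|$, defined initially on $\Dom\D\cap\Dom\D'$ and then as a form sum on $\Dom|\D|^{1/2}\cap\Dom|\D'|^{1/2}$, is regular self-adjoint and commutes with $F$. Hence $\D_{t} := F|\D|_{t}$ is an odd regular self-adjoint operator on $E$ for each $t\in[0,1]$.

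The key computation is that for any homogeneous $w \in \Lip^{0}(\D)\cap\Lip^{0}(\D')$, one has on the common core $\Dom\D\cap\Dom\D'$ the identity
\[
[\D_{t}, w] \;=\; (1-t)\,[\D, w] + t\,[\D', w] ,
\]
using only that $w$ preserves both domains and that $F|\D| = \D$, $F|\D'| = \D'$; the symmetry $F$ never appears alone on this intersection, so no assumption about $F$ commuting with $w$ is needed. The right-hand side is uniformly bounded in $t$, so $[\D_{t}, w]$ extends to a bounded adjointable operator on $E$ uniformly in $t$.

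Next I would assemble the family $\{\D_{t}\}_{t\in[0,1]}$ into a regular self-adjoint operator $\widetilde\D$ on the Hilbert $C([0,1], B)$-module $C([0,1], E)$ using \cref{lem:fam_reg_sa}, with the form-domain intersection serving as a common core on which $t\mapsto\D_{t}\psi$ is explicitly continuous. The endpoints then recover $(\pi, E, \D)$ at $t = 0$ and $(\pi, E, \D')$ at $t = 1$. To verify that $(\pi, C([0,1], E), \widetilde\D)$ is an unbounded $A$-$C([0,1],B)$-cycle, boundedness of $[\widetilde\D, w]$ for $w \in \Lip^{0}(\D)\cap\Lip^{0}(\D')$ follows from the key identity, and the $A$-local compactness of the resolvents follows by a continuity-in-$t$ argument combined with compactness at the endpoints (using $\D_{t}^{2} = |\D|_{t}^{2}$).

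The main technical hurdle is ensuring that the form sum $|\D|_{t}$ is genuinely a regular self-adjoint operator on the Hilbert $B$-module $E$ (rather than merely on a Hilbert-space completion), and crucially that $\Dom\D\cap\Dom\D'$ is a core for $\D_{t}$, so that the key commutator identity really extends by continuity to produce $w \in \Lip^{0}(\D_{t})$ rather than just a commutator bounded on a dense subspace. An alternative route that sidesteps the form-sum theory is to build the interpolation first at the bounded-transform level, $\widetilde F_{t} := (1-t) F_{\D} + t F_{\D'} = F\bigl((1-t)\phi(|\D|) + t\phi(|\D'|)\bigr)$ (with $\phi(x) = x(1+x^{2})^{-1/2}$); this is a norm-continuous family of bounded Kasparov modules commuting with $F$, and its unbounded lift $\D_{t} := \widetilde F_{t}(1 - \widetilde F_{t}^{2})^{-1/2}$ coincides with the form-sum construction on the appropriate domain. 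The commutator and compactness estimates can then be controlled via integral representations analogous to those used in the proof of \cref{prop:functional_dampening}.
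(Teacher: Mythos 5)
Your strategy — the straight-line interpolation $\D_{t} := (1-t)\D + t\D' = F\bigl((1-t)|\D| + t|\D'|\bigr)$ — has a genuine gap that you partially acknowledge but do not close, and it is precisely the gap that the paper's argument is designed to avoid. The hypotheses give you two regular self-adjoint operators $\D$ and $\D'$ on the \emph{same} module $E$, but they give you no control whatsoever over $\Dom\D\cap\Dom\D'$: nothing forces this intersection to be dense, let alone to be a core for a putative form-sum operator, and for regular operators on Hilbert $C^*$-modules the form-sum construction is considerably more delicate than in the Hilbert-space case (regularity, which is equivalent to having a well-behaved bounded transform, is an extra condition that does not come for free). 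Your "key identity" $[\D_t,w] = (1-t)[\D,w] + t[\D',w]$ is only available on $\Dom\D\cap\Dom\D'$, so without knowing this space is a core you cannot conclude $w\in\Lip(\D_t)$, and the alternative route via $\til F_t := (1-t)F_\D + tF_{\D'}$ founders on the same rock: you would need $1-\til F_t^2$ to have dense range and its inverse to match the form sum, neither of which is addressed.

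The paper sidesteps the domain-intersection problem entirely by never putting $\D$ and $\D'$ on the same domain. It forms the \emph{direct difference} $(E,\D)\oplus(E^\op,-\D')$, whose operator lives naturally on $\Dom\D\oplus\Dom\D'$ with no intersection required, and then proves this cycle is null-homotopic. Concretely, it first applies the functional-dampening result (\cref{prop:power-damp_Lip_reg}) to replace $\D$ and $\D'$ by Lipschitz regular dampenings, so that a suitable $W\subset\Lip^0(\D)\cap\Lip(|\D|)\cap\Lip^0(\D')\cap\Lip(|\D'|)$ has $\pi(A)\subset\overline W$; the Lipschitz regularity is then used to check that $\D[F,w]$ is bounded (via $\D[F,w] = [|\D|,w] - [\D,w]F$), which is exactly the hypothesis needed to invoke \cite[Proposition 6.2]{Kaa19pre}. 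That proposition produces a homotopy from $(E,\D)-(E,\D')$ to a spectrally decomposable cycle, and \cref{coro:spec_symm_deg_0} then shows that any such cycle is null-homotopic, so $(E,\D)\sim_h(E,\D')$. You never use Lipschitz regularity (which is where the common symmetry $F$ actually does work for you), and you never pass to the direct sum; both of these are the structural ingredients that make the paper's argument go through and that your proposal is missing.
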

\begin{proof}
Using \cref{prop:power-damp_Lip_reg}, we may assume (without loss of generality) that $(E,\D)$ and $(E,\D')$ are Lipschitz regular, and that $\pi(A) \subset \overline{W}$ for some $$W \subset \Lip^0(\D)\cap\Lip(|\D|)\cap\Lip^0(\D')\cap\Lip(|\D'|). $$
We then note that the operator $F$ satisfies the assumptions of \cite[Proposition 6.2]{Kaa19pre} (with the dense $*$-subalgebra $\A\subset A$ replaced by $W$), where we point out that the Lipschitz regularity of $\D$ ensures that 
\[
\D [F,w] = [\D F,w] - [\D,w] F = [|\D|,w] - [\D,w] F 
\]
is bounded for $w\in W$ (and similarly for $\D'$). 
Then we know from (the proof of) \cite[Proposition 6.2]{Kaa19pre} that $(E,\D) - (E,\D')$ is homotopic to a spectrally decomposable cycle. 
Using \cref{coro:spec_symm_deg_0} we conclude that $(E,\D)-(E,\D')$ is null-homotopic, and therefore $(E,\D)$ is homotopic to $(E,\D')$. 
\end{proof}

\begin{coro}
Let $(\pi,E,\D)$ and $(\pi,E,\D')$ be unbounded  $A$-$B$-cycles. Suppose that $\D$ and $\D'$ are both invertible, and that $\sgn(\D) = \sgn(\D')$. 
Then $(E,\D)$ is homotopic to $(E,\D')$. 
\end{coro}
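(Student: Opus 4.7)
The plan is to derive this corollary as a direct application of \cref{prop:equal_spec_symm} by taking the odd self-adjoint unitary to be $F := \sgn(\D) = \sgn(\D')$.

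First I would verify the formal properties of $F$. Since $\D$ is odd, regular self-adjoint, and invertible, $\sgn$ is a continuous bounded odd function on $\spec(\D) \subset \R \setminus \{0\}$, so by \cref{lem:odd_funct_calc} the operator $F = \D|\D|^{-1}$ is an odd self-adjoint adjointable operator with $F^2 = 1$, i.e., an odd self-adjoint unitary. Because $F$ is simultaneously a function of $\D$ and of $\D'$ via the continuous functional calculus, $F$ preserves both $\Dom \D$ and $\Dom \D'$, commutes with each operator in the sense required by \cref{prop:equal_spec_symm}, and satisfies $F\D = \sgn(\D)\D = |\D|$ and $F\D' = \sgn(\D')\D' = |\D'|$ by polar decomposition.

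The remaining (and nontrivial) hypothesis to establish is $\pi(A) \subset \overline{\Lip^0(\D) \cap \Lip^0(\D')}$, which is not automatic from the separate inclusions $\pi(A) \subset \overline{\Lip^0(\D)}$ and $\pi(A) \subset \overline{\Lip^0(\D')}$. This is the main obstacle. My approach would be to first replace the given cycles by operator-homotopic ones for which a common Lipschitz structure is easier to access. Using \cref{prop:functional_dampening} with the dampening function $\sgnmod^\alpha$ of \cref{remark:sgnmod} for some $0 < \alpha < 1$, the cycles $(E,F|\D|^\alpha)$ and $(E,F|\D'|^\alpha)$ are operator-homotopic to $(E,\D)$ and $(E,\D')$ respectively; moreover, both are Lipschitz regular and carry the common spectral symmetry $F$. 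Since $F|\D|^\alpha$ and $F|\D'|^\alpha$ differ only by replacing one positive operator commuting with $F$ by another, a common dense subset of $\pi(A)$ inside $\Lip^0(F|\D|^\alpha) \cap \Lip^0(F|\D'|^\alpha)$ can be extracted along the lines of the argument used in the proof of \cref{prop:equal_spec_symm}.

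Once the compatibility condition is in place for the dampened cycles, \cref{prop:equal_spec_symm} yields a homotopy $(E,F|\D|^\alpha) \sim_h (E,F|\D'|^\alpha)$. Concatenating this with the two operator-homotopies provided by functional dampening produces the chain
\[
(E,\D) \sim_{oh} (E,F|\D|^\alpha) \sim_h (E,F|\D'|^\alpha) \sim_{oh} (E,\D'),
\]
which proves the corollary. The hard part is entirely contained in the verification of the Lipschitz-intersection condition; the rest is a formal unwinding of the polar decomposition.
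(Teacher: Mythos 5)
Your identification of $F := \sgn(\D) = \sgn(\D')$ as the odd self-adjoint unitary, and your verification that $F$ commutes with $\D$ and $\D'$ and satisfies $F\D = |\D|$, $F\D' = |\D'|$, correctly set up an application of \cref{prop:equal_spec_symm}; that is indeed the intended route.

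The problem is the paragraph in which you try to supply the hypothesis $\pi(A) \subset \overline{\Lip^0(\D) \cap \Lip^0(\D')}$. Functional dampening does not produce a common dense Lipschitz subset: \cref{prop:power-damp_Lip_reg} gives the inclusions $\Lip^0(\D) \subset \Lip^0(F|\D|^\alpha)$ and $\Lip^0(\D') \subset \Lip^0(F|\D'|^\alpha)$, each of which enlarges its own Lipschitz algebra but does nothing to increase the overlap of the two, so a priori $\Lip^0(F|\D|^\alpha) \cap \Lip^0(F|\D'|^\alpha)$ is no denser in $\pi(A)$ than $\Lip^0(\D) \cap \Lip^0(\D')$ was. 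The sentence asserting that a common dense subset ``can be extracted along the lines of the argument used in the proof of \cref{prop:equal_spec_symm}'' is the whole content of the step and is not justified: the proof of \cref{prop:equal_spec_symm} \emph{begins} with a dense $W \subset \Lip^0(\D)\cap\Lip^0(\D')$ and upgrades its Lipschitz class, it does not manufacture such a $W$. Sharing the phase $F$ does not reconcile the Lipschitz structures of $|\D|$ and $|\D'|$; as \cref{remark:subalgebra} (citing \cite[Appendix A]{DGM18}) shows, two Lipschitz algebras for the same representation can both be dense while having non-dense intersection. The correct reading is that the corollary is a specialisation of \cref{prop:equal_spec_symm} in which $F$ is taken to be $\sgn(\D)=\sgn(\D')$, with the hypothesis $\pi(A) \subset \overline{\Lip^0(\D) \cap \Lip^0(\D')}$ inherited from the proposition. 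With that hypothesis in force, your formal verification of the remaining conditions already constitutes a complete proof, and the detour through dampening is unnecessary.
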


\subsection{Clifford symmetries}
\label{sec:Cliff_symm}

\begin{defn}
An unbounded Kasparov $A$-$B$-cycle $(E,\D)$ is called \emph{Clifford symmetric} if there exist an odd self-adjoint unitary $\gamma$ on $E$ and a $W\subset\Lip^0(\D)$ such that $\pi(A)\subset\overline{W}$, $[\gamma,w]=0$ for all $w\in W$, $\gamma\colon \Dom\D\to\Dom\D$, and $\D \gamma=-\gamma\D$. 
\end{defn}
The idea here is that a Clifford symmetric $A$-$B$-cycle is in fact an $A\hot\CCliff_1$-$B$-cycle, and the image of the map $\KK(A\hot\CCliff_1,B)\to\KK(A,B)$ is zero. Indeed, one easily checks that the bounded transform $(E,F_\D)$ of a Clifford symmetric unbounded cycle is operator-homotopic to the degenerate Kasparov module $(E,\gamma)$. 
We prove here an analogous statement for unbounded cycles. 

\begin{lem}
\label{lem:Cliff_symm_spec_symm}
Let $(E,\D)$ be an unbounded $A$-$B$-cycle with a Clifford symmetry $\gamma$ and $0<\alpha <1$. 
Then $(E,\D)$ is operator-homotopic to the spectrally symmetric unbounded cycle $(E,\gamma|\D|^{\alpha})$. 
\end{lem}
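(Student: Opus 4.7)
The plan is to first verify that $(E,\gamma|\D|^\alpha)$ is a spectrally symmetric unbounded cycle with spectral symmetry $\gamma$ itself, and then to exhibit the rotation-type interpolation
\[
\D_t:=\cos(\tfrac{\pi t}{2})\D+\sin(\tfrac{\pi t}{2})\gamma|\D|^\alpha,\qquad t\in[0,1],
\]
as an operator-homotopy from $\D_0=\D$ to $\D_1=\gamma|\D|^\alpha$.

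For the endpoint, the anticommutation $\D\gamma+\gamma\D=0$ forces $\gamma$ to commute with $\D^2$ and hence, via functional calculus, with $|\D|^\alpha$. Thus $\gamma|\D|^\alpha=|\D|^\alpha\gamma$ is odd, regular, and self-adjoint. Applying \cref{prop:power-damp_Lip_reg} to the function $f_\alpha(x):=x(1+x^2)^{(\alpha-1)/2}$ yields $\Lip^0(\D)\subset\Lip(|f_\alpha(\D)|)$; since the difference $|f_\alpha(\D)|-|\D|^\alpha$ is bounded (the scalar function lies in $C_0([0,\infty))$), we deduce $W\subset\Lip(|\D|^\alpha)$. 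Consequently $[\gamma|\D|^\alpha,w]=\gamma[|\D|^\alpha,w]$ is bounded for $w\in W$, and $w(1+|\D|^{2\alpha})^{-1/2}$ is compact (using that $w\cdot g(\D)$ is compact for any $g\in C_0(\R)$, \cref{sec:reg_sa}). The symmetry axioms for $\gamma$ are then immediate: $\gamma$ commutes with $W$ by hypothesis, commutes with $\gamma|\D|^\alpha$, and $\gamma\cdot\gamma|\D|^\alpha=|\D|^\alpha\geq 0$.

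The crucial structural observation for the homotopy is that, since $\gamma$ anticommutes with $\D$ and commutes with $|\D|^\alpha$, the two summands in $\D_t$ anticommute, whence
\[
\D_t^2=\cos^2(\tfrac{\pi t}{2})\D^2+\sin^2(\tfrac{\pi t}{2})|\D|^{2\alpha}.
\]
Setting $\phi_t(x):=\bigl(1+\cos^2(\tfrac{\pi t}{2})x^2+\sin^2(\tfrac{\pi t}{2})x^{2\alpha}\bigr)^{-1}\in C_0([0,\infty))$, one has $(\D_t^2+1)^{-1}=\phi_t(|\D|)$ and hence $(\D_t\pm i)^{-1}=(\D_t\mp i)\phi_t(|\D|)$ is bounded. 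This gives regular self-adjointness of each $\D_t$ on the common core $\Dom\D$ (since $|\D|^\alpha$ is $\D$-bounded with relative bound $0$ as $\alpha<1$), and strong continuity of $t\mapsto\D_t\psi$ on $\Dom\D$ is clear, so \cref{lem:fam_reg_sa} produces a regular self-adjoint operator $\til\D$ on $C([0,1],E)$. For $w\in W$, the commutator $[\D_t,w]=\cos(\tfrac{\pi t}{2})[\D,w]+\sin(\tfrac{\pi t}{2})\gamma[|\D|^\alpha,w]$ is uniformly bounded and norm-continuous, while the expansion
\[
w(\D_t+i)^{-1}=w\tilde g_t(\D)+\gamma w\tilde h_t(\D)-iw\tilde\phi_t(\D),
\]
with $\tilde g_t(x):=\cos(\tfrac{\pi t}{2})x\phi_t(|x|)$, $\tilde h_t(x):=\sin(\tfrac{\pi t}{2})|x|^\alpha\phi_t(|x|)$, $\tilde\phi_t(x):=\phi_t(|x|)$, reduces compactness and norm-continuity of the resolvent family to corresponding norm-properties of $C_0(\R)$-valued paths. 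Together with the compactness of $w\cdot g(\D)$ for $g\in C_0(\R)$, this gives $W\subset\Lip^0(\til\D)$, and since the underlying module and representation are constant in $t$, the result is the desired operator-homotopy.

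The main obstacle is ensuring that $\tilde g_t$ and $\tilde h_t$ remain in $C_0(\R)$ uniformly in $t$, despite the fact that at $t=1$ (resp.\ $t=0$) the raw function $x\phi_t(|x|)$ (resp.\ $|x|^\alpha\phi_t(|x|)$) need not decay at infinity when $\alpha\le \tfrac12$. The saving grace is the $\cos$ (resp.\ $\sin$) prefactor, which vanishes precisely where the decay fails; the required estimate $\sup_x|\tilde g_t(x)|\to 0$ as $t\to 1$ and $\sup_x|\tilde h_t(x)|\to 0$ as $t\to 0$ is obtained by balancing the prefactor against the polynomial denominator via an optimisation in $x$, and this uniform control is what glues the family into an honest operator-homotopy.
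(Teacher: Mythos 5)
You take a genuinely different route from the paper, and it does work, but it shifts where the work lies. The paper first invokes \cref{prop:power-damp_Lip_reg} (with \cref{remark:sgnmod}) to replace $\D$ by $\sgnmod^\alpha(\D)$ up to operator-homotopy, and only \emph{then} runs the rotation
\[
\D_t=\cos\bigl(\tfrac{\pi t}{2}\bigr)\,\sgnmod^\alpha(\D)+\sin\bigl(\tfrac{\pi t}{2}\bigr)\,\gamma|\D|^\alpha .
\]
Because $\sgnmod^\alpha(\D)$ is an odd function of $\D$, \cref{lem:odd_funct_calc} makes $\gamma$ anticommute with it, so the two summands anticommute \emph{and} have the same square, giving $\D_t^2=|\D|^{2\alpha}$ \emph{independently} of $t$. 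This immediately yields $\Lip^0(\D)\subset\Lip^0(\D_t)$ with no quantitative estimate, and the lemma follows by composing two operator-homotopies. Your rotation starts directly from $\D$, so $\D_t^2=\cos^2(\tfrac{\pi t}{2})\D^2+\sin^2(\tfrac{\pi t}{2})|\D|^{2\alpha}$ is $t$-dependent, and you must show that the resolvent functions $\tilde g_t,\tilde h_t,\tilde\phi_t$ vary norm-continuously in $C_0(\R)$, in particular that $\|\tilde g_t\|_\infty\to0$ as $t\to1$ and $\|\tilde h_t\|_\infty\to0$ as $t\to0$. You correctly identify this as the crux but stop at a gesture towards ``an optimisation in~$x$''; the estimate does hold --- e.g.\ maximising $c|x|\bigl(1+c^2x^2+s^2|x|^{2\alpha}\bigr)^{-1}$ in $x$ gives a bound that tends to $0$ with $c=\cos(\tfrac{\pi t}{2})$ --- but carrying it out in all regimes of $\alpha$ is exactly the bookkeeping the paper's two-step detour is engineered to avoid. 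The trade-off is clear: your argument is a single explicit homotopy at the cost of a genuine analytic estimate, whereas the paper pays with an extra composition (already available from \cref{prop:power-damp_Lip_reg}) to keep the second step purely algebraic. Your endpoint analysis (that $(E,\gamma|\D|^\alpha)$ is spectrally symmetric with symmetry $\gamma$, and $W\subset\Lip(|\D|^\alpha)$ via the boundedness of $|f_\alpha(\D)|-|\D|^\alpha$) matches the paper's reasoning.
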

\begin{proof}
Since $\gamma$ commutes with $|\D|^{\alpha}$ and $(\gamma|\D|^{\alpha})^2 = |\D|^{2\alpha}$, we know that $\gamma|\D|^{\alpha}$ is regular and self-adjoint, and $T (1+(\gamma|\D|^{\alpha})^2)^{-\frac12}$ is compact for any $T\in\Lip^0(\D)$. 
Moreover, since  $\Lip(\gamma|\D|^{\alpha}) = \Lip(|\D|^{\alpha})$ contains $\Lip(\D)$, 
we see that $\pi(A) \subset \overline{\Lip^0(\D)}\subset \overline{\Lip^0(\gamma|\D|^{\alpha})}$. 
Thus $(E,\gamma|\D|^{\alpha})$ is indeed an unbounded cycle. 
We note that $\gamma$ provides a spectral symmetry for $(E,\gamma|\D|^{\alpha})$. 
The operator-homotopy is obtained by composing the operator-homotopy between $\D$ and $\sgnmod^{\alpha}(\D)$ (see \cref{prop:power-damp_Lip_reg,remark:sgnmod}) with the operator-homotopy given for $t\in[0,1]$ by 
\begin{align}
\label{eq:op-hom_spec_symm}
\D_t := \cos\big(\tfrac{\pi t}2\big) \sgnmod^{\alpha}( \D) + \sin\big(\tfrac{\pi t}2\big) \gamma |\D|^{\alpha} . 
\end{align}
Note that $\gamma$ anti-commutes with $\sgnmod^{\alpha}(\D)$ as the latter is given by an odd function of $\D$ (see \cref{lem:odd_funct_calc}).
We then compute that $\D_t^2 = |\D|^{2\alpha}$, and thus $\Lip^0(\D) \subset \Lip^0(\D_t)$ for all $t\in[0,1]$, so $\D_t$ is indeed an operator-homotopy.
\end{proof}

As in \cite[Definition 3.1]{DGM18}, we say that an unbounded cycle $(E,\D)$ is \emph{weakly degenerate} if $\D$ is given by a sum $\D = \D_0+\mS$, such that 
\begin{itemize}
\item $\D_0$ and $\mS$ are odd regular self-adjoint operators with $\Dom\D = \Dom\D_0\cap\Dom \mS$;
\item $\mS$ is invertible, $A\subset\Lip(\mS)$, and $\mS a-a\mS=0$ for all $a\in A$;
\item there is a common core $\E \subset \Dom(\mS\D_0)\cap\Dom(\D_0 \mS)$  for $\D_0$ and $\mS$ such that $\D_0\mS+\mS\D_0=0$ on $\E$. 
\end{itemize}
Roughly speaking, this means that $\mS$ is degenerate and $\D_0$ has Clifford symmetry $\gamma=\sgn(\mS)$. The proof of \cref{lem:Cliff_symm_spec_symm} can be adapted to weakly degenerate cycles. 

\begin{lem}
\label{lem:weakly_deg}
Any weakly degenerate unbounded $A$-$B$-cycle $(E,\D=\D_0+\mS)$ is operator-homotopic to the spectrally symmetric unbounded  $A$-$B$-cycle $(E,\sgn(\mS) |\D|^\alpha)$ for any $0<\alpha<1$. 
In particular, $(E,\D)$ is null-homotopic. 
\end{lem}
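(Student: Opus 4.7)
The plan is to adapt the operator-homotopy construction from the proof of \cref{lem:Cliff_symm_spec_symm}, using the fact that $\gamma := \sgn(\mS)$ acts as a Clifford symmetry for $\D_0$ and as a spectral symmetry for $\mS$. Since $\mS$ is invertible, odd, and self-adjoint, $\gamma$ is a well-defined odd self-adjoint unitary with $\mS = \gamma|\mS|$. From the defining relations of weak degeneracy, $\gamma$ commutes with $A$, $|\mS|$, and $|\D|$ (the last using $\D^2 = \D_0^2 + \mS^2$ on the common core together with the fact that $\gamma$ commutes with both $\D_0^2$ and $\mS^2$), while $\gamma$ anti-commutes with $\D_0$. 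Moreover $\D$ is invertible, since $\D^2 \geq \mS^2$ is bounded below by $c^2 := \inf\spec(\mS^2) > 0$.

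I would then construct the one-parameter family
\[
\D_t := \cos\!\bigl(\tfrac{\pi t}{2}\bigr)\, \D_0 \;+\; \gamma\, h_t, \qquad h_t := \bigl(\cos^2\!\bigl(\tfrac{\pi t}{2}\bigr)\,\mS^2 + \sin^2\!\bigl(\tfrac{\pi t}{2}\bigr)\,|\D|^{2\alpha}\bigr)^{\!1/2},
\]
for $t \in [0,1]$, with $h_t$ defined through the joint functional calculus of the commuting positive operators $\mS^2$ and $|\D|^{2\alpha}$. By construction the endpoints are $\D_0 = \D_0 + \gamma|\mS| = \D$ and $\D_1 = \gamma|\D|^\alpha$. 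Since $\D_0$ commutes with $h_t$ and anti-commutes with $\gamma$, the cross terms in $\D_t^2$ cancel and
\[
\D_t^2 \;=\; \cos^2\!\bigl(\tfrac{\pi t}{2}\bigr)\,\D^2 \;+\; \sin^2\!\bigl(\tfrac{\pi t}{2}\bigr)\,|\D|^{2\alpha}
\]
is a continuous function of $|\D|$, uniformly bounded below by $\min(c^2,c^{2\alpha})>0$. This provides the required regular self-adjoint family on $C([0,1],E)$ via \cref{lem:fam_reg_sa} with common core $\Dom\D$, and because $(1+\D_t^2)^{-1/2} = g_t(|\D|)$ with $g_t \in C_0([c,\infty))$ depending norm-continuously on $t$, the standard functional-calculus argument yields uniformly norm-continuous, locally compact resolvents once $w(\D \pm i)^{-1}$ is compact for $w$ in a dense subset $W \subset \Lip^0(\D)$.

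The main obstacle will be bounding the commutators $[\D_t,w]$ uniformly in $t$ on a suitable dense subset $W'$ approximating $\pi(A)$. One needs $w$ to commute with $\gamma$, since otherwise the piece $[\gamma,w]\,h_t$ would be genuinely unbounded; this forces $W' \subset \Lip^0(\D) \cap \{[\mS,\cdot]=0\}$. The existence of such a $W'$ with $\pi(A)\subset\overline{W'}$ requires care, as generic $\Lip^0(\D)$-approximants of $\pi(a)$ need not commute with $\mS$; I would handle it by averaging such approximants against the unitary group generated by $\mS$, exploiting $[\mS,\pi(A)]=0$. For any $w \in W'$ the identity
\[
[\D_t,w] \;=\; \cos\!\bigl(\tfrac{\pi t}{2}\bigr)[\D,w] \;+\; \gamma\,[h_t,w]
\]
reduces the problem to controlling $[h_t,w]$ uniformly in $t$, which I would do using the integral representation for fractional powers from the proof of \cref{prop:functional_dampening} applied to $|\D|^{2\alpha}$. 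With these estimates, $(E,\D)$ is operator-homotopic to $(E,\gamma|\D|^\alpha)$. The latter is spectrally symmetric with spectral symmetry $\gamma$, since $\gamma\cdot\gamma|\D|^\alpha = |\D|^\alpha\geq 0$, and hence null-homotopic by \cref{coro:spec_symm_deg_0}; transitivity then completes the proof.
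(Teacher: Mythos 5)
Your overall strategy is exactly the paper's: show that a weakly degenerate cycle is operator-homotopic to $(E,\gamma|\D|^\alpha)$ with $\gamma=\sgn(\mS)$, observe that the latter is spectrally symmetric with spectral symmetry $\gamma$, and conclude by \cref{coro:spec_symm_deg_0}. The route you take to the operator-homotopy, however, is genuinely different. The paper reuses the two-step construction from \cref{lem:Cliff_symm_spec_symm}: first dampen $\D\mapsto\sgnmod^\alpha(\D)$ via \cref{prop:power-damp_Lip_reg}, then apply the rotation \cref{eq:op-hom_spec_symm}, and the only new verification is that $\D_t^2=|\D|^{2\alpha}+2\sin(\tfrac{\pi t}2)\cos(\tfrac{\pi t}2)\,[\sgnmod^\alpha(\D),\gamma]\,|\D|^\alpha$ is still bounded below by $|\D|^{2\alpha}$, using $[\sgnmod^\alpha(\D),\gamma]=2|\mS|\,|\D|^{\alpha-1}\ge 0$. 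This keeps the domain of $\D_t$ constantly equal to $\Dom|\D|^\alpha$ and makes regularity, strong continuity, and resolvent compactness immediate (one has $|\D|^{2\alpha}\le\D_t^2\le 3|\D|^{2\alpha}$). Your one-step family
\[
\D_t=\cos\!\bigl(\tfrac{\pi t}{2}\bigr)\D_0+\gamma h_t,\qquad h_t=\bigl(\cos^2\!\bigl(\tfrac{\pi t}{2}\bigr)\mS^2+\sin^2\!\bigl(\tfrac{\pi t}{2}\bigr)|\D|^{2\alpha}\bigr)^{1/2},
\]
is a valid-looking interpolation with the clean identity $\D_t^2=\cos^2\!\bigl(\tfrac{\pi t}{2}\bigr)\D^2+\sin^2\!\bigl(\tfrac{\pi t}{2}\bigr)|\D|^{2\alpha}$, but it is technically heavier: $h_t$ requires the joint functional calculus of two commuting regular positive operators, $\Dom\D_t$ jumps from $\Dom\D$ (for $t<1$) to the larger $\Dom|\D|^\alpha$ at $t=1$, and the regularity/self-adjointness of each $\D_t$ and the strong continuity of $t\mapsto\D_t\psi$ need to be established from scratch rather than read off. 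All of this can plausibly be made to work, but it replaces a two-line computation with substantial extra bookkeeping, without a clear gain.

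On the point you flag as the main obstacle: you are right that one needs a dense set $W$ of approximants commuting with $\gamma$ in order to control $[\D_t,w]$, and that generic $\Lip^0(\D)$-approximants need not commute with $\mS$ or $\gamma$. The averaging idea you propose, however, does not obviously close this gap. The natural conditional expectation $w\mapsto\tfrac12(w+\gamma w\gamma)$ (or averaging against $\{e^{is\mS}\}$) satisfies
\[
[\D,\gamma w\gamma]=\gamma\,[\gamma\D\gamma,w]\,\gamma=\gamma\,[-\D_0+\mS,w]\,\gamma,
\]
so $\gamma w\gamma\in\Lip(\D)$ forces $w\in\Lip(-\D_0+\mS)$, which is a \emph{different} Lipschitz algebra from $\Lip(\D)=\Lip(\D_0+\mS)$; there is no reason for the average to remain in $\Lip^0(\D)$. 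So the averaging as described does not resolve the issue, and if you pursue your route you will need a different argument here (e.g.\ restricting to a fixed dense $*$-subalgebra $\A\subset A$ with $\pi(\A)\subset\Lip^0(\D)\cap\Lip(\mS)$, which is what the bordism-theoretic origin of the weak-degeneracy notion provides). The paper's proof is terse on this same point, so it is a reasonable thing for you to worry about, but the proposed fix as stated would need to be repaired.
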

\begin{proof}
The proof is the same as for \cref{lem:Cliff_symm_spec_symm}, but we need to show that \cref{eq:op-hom_spec_symm} is again an operator-homotopy (with $\gamma = \sgn(\mS)$). 
We compute 
\[
\D_t^2 = |\D|^{2\alpha} + 2 \sin\big(\tfrac{\pi t}2\big) \cos\big(\tfrac{\pi t}2\big) [\sgnmod^\alpha(\D),\gamma] |\D|^\alpha .
\]
Since $\mS$ is invertible, also $\D$ is invertible, and we find that 
\[
[\sgnmod^\alpha(\D),\gamma] = [\D,\gamma] |\D|^{\alpha-1} = 2 |\mS| \, |\D|^{\alpha-1} . 
\]
In particular, $[\sgnmod^\alpha(\D),\gamma] |\D|^\alpha$ is a positive operator and therefore $\D_t^2 \geq |\D|^{2\alpha}$ for all $t\in[0,1]$. Hence, if $T(1+\D^2)^{-\frac12}$ is compact for some $T\in\End_B(E)$, then also $T(1+|\D|^{2\alpha})^{-\frac12}$ is compact, and therefore 
\[
T (1+\D_t^2)^{-\frac12} = T (1+|\D|^{2\alpha})^{-\frac12} (1+|\D|^{2\alpha})^{\frac12} (1+\D_t^2)^{-\frac12}
\]
is compact. Thus $\Lip^0(\D) \subset \Lip^0(\D_t)$ for all $t\in[0,1]$, so $\D_t$ is indeed an operator-homotopy.
Finally, it follows from \cref{coro:spec_symm_deg_0} that $(E,\gamma |\D|^\alpha)$ is null-homotopic. 
\end{proof}

\subsection{The unbounded \texorpdfstring{$\KK$}{KK}-group}
\label{sec:group}

As mentioned in \cref{remark:KK_group}, the isomorphism $\overline{\UKK}(A,B) \simeq \KK(A,B)$ from \cref{thm:bdd_transform_KK_isom} implies in particular that $\overline{\UKK}(A,B)$ is a group. Here we give a direct proof of this fact, working only in the unbounded picture of $\KK$-theory (hence avoiding the bounded transform entirely). In particular, the proof we give here (in contrast with \cref{thm:bdd_transform_KK_isom}) does not require the assumption that $A$ is separable. 

Given an unbounded $A$-$B$-cycle $(\pi,E,\D)$, define its `inverse' as 
\[
-(\pi,E,\D) := (\pi^\op,E^\op,-\D) , 
\]
where $E^\op = E$ with the opposite grading and the representation $\pi^\op(a) = (-1)^{\deg a} \pi(a)$ for homogeneous elements $a\in A$. 

\begin{thm}
\label{thm:UKK_group}
For any $\sigma$-unital $C^*$-algebras $A$ and $B$, the abelian semigroup $\overline{\UKK}(A,B)$ is in fact a group. 
More precisely, the inverse of $[(\pi,E,\D)] \in \overline{\UKK}(A,B)$ is given by $[-(\pi,E,\D)]$. 
\end{thm}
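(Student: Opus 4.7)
The strategy is to show directly that the direct sum $(\pi,E,\D) \oplus (\pi^\op,E^\op,-\D)$ is null-homotopic, by exhibiting on it an explicit Clifford symmetry and then invoking the machinery from \cref{sec:Cliff_symm} and \cref{sec:spec_symm}. Since $\overline{\UKK}(A,B)$ is already known to be an abelian semigroup with zero element the class of $(0,0)$ (see \cref{sec:semigroup}), this will immediately give the group property with the proposed inverse.

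The plan is to set $\hat E := E \oplus E^\op$ with representation $\hat\pi := \pi \oplus \pi^\op$ and operator $\hat\D := \D \oplus (-\D)$, and to consider the operator
\[
\gamma := \mattwo{0}{1}{1}{0}
\]
on $\hat E$. Since the grading on $\hat E$ is $\Gamma \oplus (-\Gamma)$, a direct calculation shows that $\gamma$ is odd, self-adjoint, and unitary. Next I would check that $\gamma$ graded-commutes with $\hat\pi(a)$ for every homogeneous $a \in A$: for even $a$ this reduces to the fact that $\pi(a) = \pi^\op(a)$, whereas for odd $a$ the sign picked up by $\pi^\op$ exactly cancels the sign in the graded commutator with the odd operator $\gamma$. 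Similarly, $\gamma$ preserves $\Dom\hat\D = \Dom\D\oplus\Dom\D$ and satisfies $\hat\D\gamma = -\gamma\hat\D$, because the sign flip on the second summand turns the anti-commutation into a true relation.

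With these verifications in hand, I would observe that if $W \subset \Lip^0(\D)$ is a self-adjoint subset with $\pi(A) \subset \overline{W}$ (taken from the definition of unbounded cycle), then $\hat W := \{ w \oplus w^\op : w \in W\}$ is a self-adjoint subset of $\Lip^0(\hat\D)$ whose closure contains $\hat\pi(A)$, and every element of $\hat W$ graded-commutes with $\gamma$. Thus $(\hat\pi, \hat E, \hat\D)$ is Clifford symmetric with symmetry $\gamma$. By \cref{lem:Cliff_symm_spec_symm}, $(\hat\pi,\hat E,\hat\D)$ is operator-homotopic to the spectrally symmetric cycle $(\hat\pi, \hat E, \gamma |\hat\D|^{\alpha})$ for any $0 < \alpha < 1$, and by \cref{coro:spec_symm_deg_0} any spectrally symmetric cycle is null-homotopic. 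Composing these homotopies yields $[(\pi,E,\D)] + [-(\pi,E,\D)] = 0$ in $\overline{\UKK}(A,B)$.

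The main technical point, and essentially the only nontrivial one once one believes the setup, is to verify the compatibility of $\gamma$ with the representation $\hat\pi$ in the $\Z_2$-graded sense; all other conditions are immediate from the matrix form of $\gamma$. Since \cref{lem:Cliff_symm_spec_symm,coro:spec_symm_deg_0} do not require $A$ to be separable, this argument works for arbitrary $\sigma$-unital $A$ and $B$, as claimed.
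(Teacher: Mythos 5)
Your proposal is correct and matches the paper's proof: both exhibit the Clifford symmetry $\gamma = \mattwo{0}{1}{1}{0}$ on $(\pi\oplus\pi^{\op}, E\oplus E^{\op}, \D\oplus(-\D))$ and then chain \cref{lem:Cliff_symm_spec_symm} with \cref{coro:spec_symm_deg_0}. The only difference is that you spell out the verifications (oddness, graded-commutation with $\hat\pi$, the construction of the subset $\hat W\subset\Lip^0(\hat\D)$) that the paper leaves implicit.
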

\begin{proof}
The sum $(\pi,E,\D) - (\pi,E,\D)$ is given by the Clifford symmetric cycle 
\begin{align*}
(\pi,E,\D) - (\pi,E,\D) &= \left( \pi\oplus\pi^\op , E\oplus E^\op , \mattwo{\D}{0}{0}{-\D} \right) , & 
\gamma &= \mattwo{0}{1}{1}{0} ,
\end{align*}
where $\gamma$ denotes the Clifford symmetry. From \cref{lem:Cliff_symm_spec_symm} we know that a Clifford symmetric cycle is operator-homotopic to a spectrally symmetric cycle. Furthermore, by \cref{coro:spec_symm_deg_0}, every spectrally symmetric cycle is null-homotopic. 
Thus we have shown that $(\pi,E,\D) - (\pi,E,\D)$ is null-homotopic, and therefore $[-(\pi,E,\D)]$ is indeed the inverse of $[(\pi,E,\D)]$. 
\end{proof}

\appendix 
\section{Appendix: On localisations of dense submodules}
\label{sec:appendix}

Let $X$ be a locally compact Hausdorff space, $B$ a $C^*$-algebra, and $E$ a Hilbert $C_0(X,B)$-module. 
We will show in this Appendix that a submodule of $E$ is dense if and only if it is pointwise dense. One way to prove this could be by showing that $E$ can be viewed as a continuous field of Banach spaces (where each Banach space is in fact a Hilbert $B$-module), and then applying the theory of continuous fields \cite{DD63} (for this approach, see for instance \cite[2.7, 2.8, \& 2.21]{Ebe16pre}). %\cite[Lemma 2.7, Corollary 2.8, and Proposition 2.21]{Ebe16pre}). 
Here, we prefer instead to give our proof in the language of Hilbert $C^*$-modules. 

For $x\in X$ we denote by $\ev_{x} \colon C_0(X,B)\to B$ the $*$-homomorphism $f\mapsto f(x)$. 
Let $\iota\colon B\to B^{+}$ be the embedding of $B$ into its (minimal) unitisation $B^{+}$. 
We define the \emph{localisation} $E_{x}:=E\hot_{\ev_{x}} B^{+}$, and we note that there is a map $E\to E_{x}$ via $e\mapsto e_{x}:=e\hot 1$. 
For a submodule $F\subset E$ we write 
$$F_{x}:=\{f_{x}\in E_{x} \mvert f\in F\}\subset E_{x},$$ for the image of $F$ under the map $e\mapsto e_{x}$.
We collect some basic facts regarding these localisations in the following lemma. 

\begin{lem}
\label{lem:localisations}
\begin{enumerate}
\item The Hilbert $C_0(X,B)$-module $E$ is a central bimodule over $C_0(X)$, and the left $C_0(X)$ action is by adjointable operators.
\item The map $E\to E_{x}$ given by $e\mapsto e_{x}:=e\hot 1$ is surjective. 
\item We have a unitary isomorphism $E_x \simeq E\hot_{\ev_{x}}B$. 
\item We have the equality $\|e\|_{E}=\sup_{x\in X} \|e_{x}\|$, and the map $x\mapsto \|e_{x}\|$ lies in $C_0(X)$.
\end{enumerate}
\end{lem}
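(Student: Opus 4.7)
My plan is to address the four assertions roughly in the order (1), (4), (3), (2), handling the first two by direct invocation of standard facts and treating (3) together with (2) via a quotient description of the interior tensor product.

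For (1), I would identify $C_{0}(X)$ with a $C^{*}$-subalgebra of the center $Z(M(C_{0}(X,B)))$ of the multiplier algebra via pointwise scalar multiplication $\phi\mapsto(f\mapsto \phi\cdot f)$, which lies in the center because $\phi(x)\in\C$ commutes with every element of $B$. Since the multiplier algebra of the coefficient algebra acts on any Hilbert module by adjointable operators, $C_{0}(X)$ acts on $E$ by adjointable operators, and centrality guarantees that this action commutes with the right $C_{0}(X,B)$-action, making $E$ a central $C_{0}(X)$-bimodule. For (4), the identity $\|e\|_{E}^{2}=\|\langle e,e\rangle\|_{C_{0}(X,B)}=\sup_{x\in X}\|\langle e,e\rangle(x)\|_{B}$ (using the standard sup-norm description of $C_{0}(X,B)$) together with $\|e_{x}\|^{2}=\|\ev_{x}\langle e,e\rangle\|_{B}=\|\langle e,e\rangle(x)\|_{B}$ immediately yields the sup-norm identity upon taking square roots. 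Continuity and vanishing-at-infinity of $x\mapsto\|e_{x}\|=\|\langle e,e\rangle(x)\|_{B}^{1/2}$ then follow from the corresponding properties of $x\mapsto\langle e,e\rangle(x)\in B$, which are built into the assumption $\langle e,e\rangle\in C_{0}(X,B)$.

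For (3), I plan to use the inclusion $\iota\colon B\hookrightarrow B^{+}$ to obtain a natural isometric embedding $E\hot_{\ev_{x}}B\hookrightarrow E\hot_{\iota\circ\ev_{x}}B^{+}=E_{x}$ and show its image is dense, whence the embedding is a unitary isomorphism. For any approximate unit $\{u_{\lambda}\}\subset B$, the estimate
\[
\bigl\|e\hot 1 - e\hot u_{\lambda}\bigr\|^{2}=\bigl\|(1-u_{\lambda})\,\ev_{x}\langle e,e\rangle\,(1-u_{\lambda})\bigr\|_{B^{+}}\longrightarrow 0,
\]
valid because $\ev_{x}\langle e,e\rangle\in B$, shows that $e\hot 1$ lies in the closure of $E\hot_{\ev_{x}}B$. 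For (2) I would then invoke the standard quotient description of the interior tensor product along a surjective $*$-homomorphism: since $\ev_{x}\colon C_{0}(X,B)\to B$ is surjective with kernel $I_{x}=\{f\in C_{0}(X,B):f(x)=0\}$, the quotient $E/\overline{E\cdot I_{x}}$ is naturally a Hilbert $B$-module (using $C_{0}(X,B)/I_{x}\simeq B$) and carries a unitary isomorphism $E/\overline{E\cdot I_{x}}\simeq E\hot_{\ev_{x}}B$ sending $[e]\mapsto e\hot 1$. Combined with (3) this identifies the map $e\mapsto e_{x}$ with the quotient map $E\to E/\overline{E\cdot I_{x}}$, which is manifestly surjective.

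The main obstacle I anticipate is justifying the quotient identification behind (2), namely that the kernel of $e\mapsto e\hot 1\in E_{x}$ is precisely $\overline{E\cdot I_{x}}$. The isometric match $\|[e]\|_{E/\overline{E\cdot I_{x}}}^{2}=\|\langle e,e\rangle(x)\|_{B}=\|e\hot 1\|_{E_{x}}^{2}$ handles one direction, while surjectivity onto a dense subset of $E_{x}$ of the induced map $E\to E_{x}$ is obtained by realizing elements $\sum_{i}e_{i}\hot b_{i}\in E\hot_{\ev_{x}}B$ as $\sum_{i}(e_{i}f_{i})\hot 1$ for bump-type functions $f_{i}\in C_{0}(X,B)$ with $f_{i}(x)=b_{i}$, combined with Cohen factorization in $E$. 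All of the bookkeeping lives in this step; once it is in place, the remaining content of (2) is automatic.
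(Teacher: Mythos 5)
Your treatment of (1), (3), and (4) is essentially the same as the paper's: for (1) you embed $C_{0}(X)$ centrally in the multiplier algebra of $C_{0}(X,B)$ (the paper simply cites Kasparov), for (4) you use the same pointwise sup-norm computation, and for (3) you use the same isometric embedding $\id\hot\iota$ together with the approximate-unit density argument.

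For (2) you take a genuinely different and in fact more elaborate route than the paper. The paper argues directly: it observes that it suffices to hit elementary tensors $e\hot b$ with $b\in B$, and produces a preimage by replacing $b$ with a compactly supported lift $\til b\in C_{0}(X,B)$ satisfying $\til b(x)=b$, so that $e\til b\hot 1 = e\hot\ev_x(\til b) = e\hot b$. Your route instead invokes (3) together with the standard identification $E/\overline{E\cdot I_x}\simeq E\hot_{\ev_x}B$ for the surjective $*$-homomorphism $\ev_x$, identifying $e\mapsto e_x$ with the quotient map. Both work, but they distribute the burden differently: the paper's argument is shorter but, as literally stated, leaves implicit why the image of $e\mapsto e_x$ --- which contains the elementary tensors and is therefore dense --- is actually \emph{closed}; your quotient-module formulation is precisely what packages that closedness (via the isometry of $E/\overline{E\cdot I_x}\to E_x$ plus completeness of the quotient). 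So your approach is arguably more self-contained at the cost of establishing the quotient identification, which you correctly flag as where the bookkeeping lives. One small notational slip: the map $E/\overline{E\cdot I_x}\to E\hot_{\ev_x}B$ cannot literally be written $[e]\mapsto e\hot 1$, since $1\notin B$; the well-defined formula is $[ef]\mapsto e\hot\ev_x(f)$ for $f\in C_0(X,B)$, and the Cohen factorization you invoke is exactly what guarantees every class $[e]$ has such a representative.
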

\begin{proof}
For (1), see for instance \cite[Definition 1.5]{Kas88} and the discussion following it. 
For (2), it suffices to consider elements $e\hot b\in E_x$ with $e\in E$ and $b\in B$. Picking $f\in C_0(X)$ such that $f(x)=1$ and defining $\til b\in C_0(X,B)$ by $\til b(x) := f(x) b$, we see that $e\hot b = e\til b \hot 1$, which proves (2). 
For (3), we note that the map $\textnormal{id}\hot\iota\colon E\hot_{\ev_{x}}B\to E\hot_{\ev_{x}}B^{+}$ is an isometry, so we only need to check that the range is dense. Using an approximate unit $u_{n}\in B$, we indeed find 
\[\|e\hot 1 -e\hot u_{n}\|^{2}=\|e\hot (1-u_n)\|^{2}=\|(1-u_{n})\ev_x(\langle e,e\rangle)(1-u_n)\|\to 0 . \]
The equality in (4) follows by direct calculation: 
\[\|e\|_{E}^{2}=\|\langle e,e \rangle\|_{C_0(X,B)}=\sup_{x\in X}\|\langle e,e\rangle (x)\|_{B}=\sup_{x\in X}\|\langle e\hot 1, e\hot 1\rangle_{E_{x}}\|_{B}=\sup_{x\in X} \|e_{x}\|^2.\]
Finally, for continuity of the norm, we use that 
$\|e_{x}\|=\|\langle e,e\rangle^{\frac{1}{2}}(x)\|$ and that the map $x\mapsto \langle e,e\rangle ^{\frac{1}{2}}(x)$ is continuous.
\end{proof}

\begin{prop}
\label{prop:dense_submodules}
If $F \subset E$ is a submodule, then $F$ is dense in $E$ if and only if for each $x\in X$, $F_{x}$ is dense in $E_{x}$. 
\end{prop}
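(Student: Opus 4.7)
The forward direction is a routine consequence of Lemma~A.2. If $F$ is dense in $E$, then for any $e \in E$ and $\varepsilon > 0$ pick $f \in F$ with $\|e - f\|_E < \varepsilon$; part~(4) of the lemma gives $\|e_x - f_x\|_{E_x} \leq \|e - f\|_E < \varepsilon$ for every $x$, and combined with surjectivity of $e \mapsto e_x$ from part~(2), this shows $F_x$ is dense in $E_x$.

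For the converse, assume $F_x$ is dense in $E_x$ for every $x$. Since $F$ is dense iff $\overline{F} = E$, and since $F_x \subset (\overline{F})_x$ forces $(\overline{F})_x$ to be dense as well, I first replace $F$ by its closure and assume $F$ is closed. Then $F$ is stable under the $C_0(X)$-action of Lemma~A.2(1): this action is realised on each $e \in E$ as a limit $\phi\cdot e = \lim_n (\phi \cdot u_n)\cdot e$, where $\{u_n\}$ is a bounded approximate unit for $B$ and $\phi \cdot u_n \in C_0(X,B)$, so $\phi \cdot g \in \overline{F} = F$ for all $\phi \in C_0(X)$ and $g \in F$.

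Given $e \in E$ and $\varepsilon > 0$, the plan is to build $f \in F$ with $\|e - f\|_E \leq \varepsilon$ by a partition-of-unity construction. The set $K := \{y \in X : \|e_y\| \geq \varepsilon\}$ is compact by Lemma~A.2(4). For each $x \in K$, density of $F_x$ yields some $f^{(x)} \in F$ with $\|(e - f^{(x)})_x\| < \varepsilon$, and continuity of $y \mapsto \|(e - f^{(x)})_y\|$ (again Lemma~A.2(4)) provides an open neighborhood $U_x \ni x$ on which this bound persists. Extract a finite subcover $U_{x_1}, \ldots, U_{x_n}$ of $K$, choose functions $\phi_i \in C_c(X)$ with $0 \leq \phi_i$, $\mathrm{supp}(\phi_i) \subset U_{x_i}$, $\sum_i \phi_i \leq 1$ on $X$, and $\sum_i \phi_i \equiv 1$ on $K$, and set $f := \sum_i \phi_i\cdot f^{(x_i)} \in F$.

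The pointwise decomposition
\[
(e - f)_y = \sum_i \phi_i(y)\bigl(e - f^{(x_i)}\bigr)_y + \bigl(1 - {\textstyle\sum_i}\phi_i(y)\bigr)\, e_y
\]
then yields a uniform estimate $\|(e-f)_y\| \leq \varepsilon$: in the first sum any nonzero $\phi_i(y)$ forces $y \in U_{x_i}$ and hence $\|(e-f^{(x_i)})_y\| < \varepsilon$, while in the second term $1 - \sum_i\phi_i(y) > 0$ forces $y \notin K$ and hence $\|e_y\| < \varepsilon$. Passing to the supremum in $y$ and applying Lemma~A.2(4) gives $\|e - f\|_E \leq \varepsilon$. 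The main technical hurdle is ensuring that $\phi_i \cdot f^{(x_i)}$ actually lies in $F$, which is precisely why the reduction to closed $F$ (and, implicitly, the $\sigma$-unitality of $B$ through its approximate unit) are both needed; the rest is the standard partition-of-unity packaging.
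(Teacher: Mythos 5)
Your proof is correct and follows essentially the same partition-of-unity strategy as the paper. There is one point where you take a slightly different route: to ensure that $\phi_i\cdot f^{(x_i)}$ lies in $F$ when $\phi_i\in C_c(X)$ is merely scalar, you first replace $F$ by its closure and argue (via the limit $\phi\cdot g = \lim_n g\cdot(\phi u_n)$ with $\phi u_n \in C_0(X,B)$) that the closure is stable under the central $C_0(X)$-action; the paper instead stays inside the original $F$ by multiplying by $\eta_i(x) := \chi_i(x)u_n \in C_0(X,B)$ directly, at the price of an extra $\varepsilon$-term (giving a $4\varepsilon$ rather than $\varepsilon$ estimate). Both devices work, and yours produces a marginally cleaner final estimate. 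One small correction: you attribute the existence of the approximate unit $\{u_n\}$ to $\sigma$-unitality of $B$, but every $C^*$-algebra has a (possibly net-indexed) approximate unit, which is all the argument requires; $\sigma$-unitality is not actually used here.
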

\begin{proof}
We will freely use the facts from \cref{lem:localisations}. 
If $F$ is dense in $E$, the equality $\|e\|_{E}=\sup_{x\in X} \|e_{x}\|$ shows that $F_x$ is dense in $E_x$ for each $x\in X$. 
Conversely, suppose $F_x$ is dense in $E_x$ for all $x\in X$. 
Fix $\varepsilon>0$ and $\psi\in E$. 
For each $x\in X$, there exists $\phi\in F$ such that $\|\psi_x-\phi_x\| < \frac\varepsilon2$. By continuity of the norm, there exists a precompact open neighbourhood $U_x$ of $x$ in $X$ such that $$\sup_{y\in U_x} \|\psi_y-\phi_y\| < \varepsilon.$$ 
There exists a compact subset $K\subset X$ such that $\sup_{x\in X\backslash K} \|\psi(x)\| < \varepsilon$. 
By compactness of $K$, we can choose finitely many points $\{x_i\}_{i=1}^N$ such that $K \subset \bigcup_{i=1}^N U_{x_i}$. 
Thus on each $U_i := U_{x_i}$ there exists $\phi_{i} \in F$ such that $\sup_{y\in U_i} \|\psi_y-\phi_{i,y}\| < \varepsilon$. 
Let $U_0 := X\backslash K$, and let $\chi_{i}$ be a partition of unity subordinate to $\{U_i\}_{i=0}^N$. 
Let $\{u_n\}$ be an approximate unit for $B$, and choose $n$ large enough such that $\| \phi_{i,y} - \phi_{i,y} u_n \| < \varepsilon$ for all $i=1,\ldots,N$ and $y\in U_i$. 
Let $\eta_i \in C_0(X,B)$ be given by $\eta_i(x) := \chi_i(x) u_n$. Then the element $\phi := \sum_{i=1}^N \phi_i \eta_i \in F$ is supported on $V := \bigcup_{i=1}^N U_i$, and we compute 
\begin{align*}
\|\psi-\phi\| &\leq \sup_{x\in V}\|\psi_x-\phi_x\| + \sup_{x\in X\backslash V}\|\psi_x-\phi_x\| \\
&\leq \sup_{x\in V\backslash K}\Big\| \Big( 1 - \sum_{i=1}^N \chi_i(x) \Big) \psi_x \Big\| + \sup_{x\in V}\Big\| \sum_{i=1}^N \chi_i(x) (\psi_x-\phi_{i,x}) \Big\| \\
&\qquad+ \sup_{x\in V}\Big\| \sum_{i=1}^N \chi_i(x) (\phi_{i,x} - \phi_{i,x} u_n) \Big\| + \sup_{x\in X\backslash V}\|\psi_x\| \\
&\leq 4\varepsilon .
\end{align*}
It follows that $F$ is dense in $E$. 
\end{proof}

For any adjointable operator $T$ on $E$, we write $T_x := \ev_x(T) := T\hot1$ for the corresponding operator on $E_x=E\hot_{\ev_{x}} B^{+}$. 

\begin{coro}
\label{locallydenserange}
Let $E$ be a $C_0(X,B)$-module and $h\in\End_{B}^*(E)$. Then $h$ has dense range in $E$ if and only if for all $x\in X$, $h_{x}$ has dense range in $E_{x}$.
\end{coro}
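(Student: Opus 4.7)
The plan is to deduce this immediately from \cref{prop:dense_submodules} by applying it to the submodule $F := hE \subset E$. Once this is set up, the only genuine content is to check that localisation commutes with taking the image of $h$, i.e.\ that $F_x = h_x E_x$ for every $x\in X$; with that identification the two conditions ``$h$ has dense range in $E$'' and ``$h_x$ has dense range in $E_x$ for all $x$'' translate directly into ``$F$ is dense in $E$'' and ``$F_x$ is dense in $E_x$ for all $x$'', which are equivalent by \cref{prop:dense_submodules}.

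To establish $F_x = h_x E_x$, I would argue both inclusions directly from the definitions. The inclusion $F_x \subset h_x E_x$ is immediate: for $e \in E$, we have $(he)_x = (he)\hot 1 = (h\hot 1)(e\hot 1) = h_x e_x \in h_x E_x$. For the reverse inclusion $h_x E_x \subset F_x$, one uses \cref{lem:localisations}(2), which guarantees that the map $E \to E_x$, $e\mapsto e_x$, is surjective; hence any element of $h_x E_x$ has the form $h_x \xi = h_x e_x = (he)_x \in F_x$ for some $e\in E$.

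Combining these two steps, the equivalence then reads: $h$ has dense range in $E$ $\iff$ $F = hE$ is dense in $E$ $\iff$ $F_x = h_x E_x$ is dense in $E_x$ for every $x\in X$ (by \cref{prop:dense_submodules}) $\iff$ $h_x$ has dense range in $E_x$ for every $x\in X$. There is no real obstacle here: all the work has already been done in \cref{prop:dense_submodules}, and the only point worth verifying explicitly is that the localisation of the range is the range of the localisation, which in turn hinges on the surjectivity of the localisation map from \cref{lem:localisations}(2).
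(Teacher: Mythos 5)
Your proof is correct and is exactly the intended deduction: the paper states this as a corollary of \cref{prop:dense_submodules} with no separate proof given, precisely because the argument is the one you spell out. You have correctly identified and verified the one nontrivial detail — that $(hE)_x = h_x E_x$, which uses both $(he)_x = h_x e_x$ and the surjectivity of $e\mapsto e_x$ from \cref{lem:localisations}.
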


% \bibliographystyle{myamsalpha}
% \bibliography{biblio}

\begin{thebibliography}{BM{\noopsort{Suijlekom}}S16}

\bibitem[AP77]{AkPed}
C.~A. Akemann and G.~K. Pedersen, \emph{Ideal perturbations of elements in
  {$C\sp*$}-algebras}, Math. Scand. \textbf{41} (1977), no.~1, 117--139.
  \MR{0473848}

\bibitem[BJ83]{BJ83}
S.~{Baaj} and P.~{Julg}, \emph{{Th\'eorie bivariante de {K}asparov et
  op\'erateurs non born\'es dans les {$C^{\ast} $}-modules hilbertiens}}, C. R.
  Acad. Sci. Paris S\'er. I Math. \textbf{296} (1983), 875--878.

\bibitem[Bla98]{Blackadar98}
B.~Blackadar, \emph{K-theory for operator algebras}, 2nd ed., Math. Sci. Res.
  Inst. Publ., Cambridge University Press, 1998.

\bibitem[BM{\noopsort{Suijlekom}}S16]{BMS16}
S.~{Brain}, B.~{Mesland}, and W.~D. {\noopsort{Suijlekom}}van~Suijlekom,
  \emph{Gauge theory for spectral triples and the unbounded {K}asparov
  product}, J. Noncommut. Geom. \textbf{10} (2016), 135--206.

\bibitem[DD63]{DD63}
J.~Dixmier and A.~Douady, \emph{Champs continus d'espaces hilbertiens et de
  {$C^*$}-alg{\`e}bres}, Bull. Soc. Math. France \textbf{91} (1963), 227--284.

\bibitem[DGM18]{DGM18}
R.~J. Deeley, M.~Goffeng, and B.~Mesland, \emph{The bordism group of unbounded
  {KK}-cycles}, J. Topol. Anal. \textbf{10} (2018), no.~2, 355--400.

\bibitem[Dun18]{vdD18}
K.~{\noopsort{Dun16b}}{van den}~Dungen, \emph{{Locally bounded perturbations
  and (odd) unbounded KK-theory}}, J. Noncommut. Geom. \textbf{12} (2018),
  1445--1467.

\bibitem[{Ebe}16]{Ebe16pre}
J.~{Ebert}, \emph{{Elliptic regularity for Dirac operators on families of
  noncompact manifolds}}, 2016, arXiv:1608.01699.

\bibitem[GMR19]{GMR19pre}
M.~{Goffeng}, B.~{Mesland}, and A.~{Rennie}, \emph{{Untwisting twisted spectral
  triples}}, 2019, arXiv:1903.02463.

\bibitem[GVF01]{ElementsNCG}
J.~{Gracia-Bondia}, J.~Varilly, and H.~Figueroa, \emph{Elements of
  noncommutative geometry}, 1st ed., Advanced Texts, Birkh\"auser, 2001.

\bibitem[Hil85]{Hil85}
M.~Hilsum, \emph{Signature operator on {L}ipschitz manifolds and unbounded
  {K}asparov bimodules}, Operator Algebras and their Connections with Topology
  and Ergodic Theory (Berlin, Heidelberg) (H.~Araki, C.~C. Moore, {\c{S}}.-V.
  Stratila, and D.-V. Voiculescu, eds.), Springer Berlin Heidelberg, 1985,
  pp.~254--288.

\bibitem[Hil89]{Hil89}
\bysame, \emph{Fonctorialit{\'e} en {K}-th\'eorie bivariante pour les
  vari{\'e}t{\'e}s lipschitziennes}, K-Theory \textbf{3} (1989), no.~5,
  401--440.

\bibitem[Hil10]{Hil10}
\bysame, \emph{Bordism invariance in {KK}-theory}, Math. Scand. \textbf{107}
  (2010), no.~1, 73--89.

\bibitem[HR00]{Higson-Roe00}
N.~Higson and J.~Roe, \emph{{Analytic K-Homology}}, Oxford University Press,
  New York, 2000.

\bibitem[{Kaa}19]{Kaa19pre}
J.~{Kaad}, \emph{{On the unbounded picture of KK-theory}}, 2019,
  arXiv:1901.05161.

\bibitem[Kas80]{Kas80}
G.~G. Kasparov, \emph{{The operator K-functor and extensions of
  {$C^{\ast}$}-algebras}}, Izv. Akad. Nauk SSSR \textbf{44} (1980), 571--636.

\bibitem[Kas88]{Kas88}
\bysame, \emph{{Equivariant KK-theory and the Novikov conjecture}}, Inventiones
  Math. \textbf{91} (1988), 147--201.

\bibitem[KL12]{KL12}
J.~Kaad and M.~Lesch, \emph{{A local global principle for regular operators in
  Hilbert {$C^{\ast} $}-modules}}, J. Funct. Anal. \textbf{262} (2012), no.~10,
  4540--4569.

\bibitem[KL13]{KL13}
\bysame, \emph{Spectral flow and the unbounded {K}asparov product}, Adv. Math.
  \textbf{248} (2013), 495--530. \MR{3107519}

\bibitem[Kuc00]{Kuc00}
D.~Kucerovsky, \emph{A lifting theorem giving an isomorphism of {KK}-products
  in bounded and unbounded {KK}-theory}, J. Operator Theory \textbf{44} (2000),
  no.~2, 255--275.

\bibitem[Lan95]{Lance95}
E.~Lance, \emph{Hilbert {$C^{\ast} $}-modules: A toolkit for operator
  algebraists}, Lecture note series: London Mathematical Society, Cambridge
  University Press, 1995.

\bibitem[{Mes}14]{Mes14}
B.~{Mesland}, \emph{{Unbounded bivariant K-theory and correspondences in
  noncommutative geometry}}, J. Reine Angew. Math. \textbf{691} (2014),
  101--172.

\bibitem[MR16]{MR16}
B.~Mesland and A.~Rennie, \emph{Nonunital spectral triples and metric
  completeness in unbounded {KK}-theory}, J. Funct. Anal. \textbf{271} (2016),
  no.~9, 2460--2538.

\bibitem[Ped79]{Pedersen79}
G.~K. Pedersen, \emph{{$C^{\ast}$}-algebras and their automorphism groups},
  L.M.S. monographs, vol.~14, Academic Press, 1979.

\bibitem[Tel83]{Tel83}
N.~Teleman, \emph{The index of signature operators on {L}ipschitz manifolds},
  Publ. Math. Inst. Hautes {\'E}tudes Sci. \textbf{58} (1983), no.~1, 39--78.

\end{thebibliography}

\providecommand{\noopsort}[1]{}\providecommand{\vannoopsort}[1]{}
\providecommand{\bysame}{\leavevmode\hbox to3em{\hrulefill}\thinspace}
\providecommand{\MR}{\relax\ifhmode\unskip\space\fi MR }
% \MRhref is called by the amsart/book/proc definition of \MR.
\providecommand{\MRhref}[2]{%
  \href{http://www.ams.org/mathscinet-getitem?mr=#1}{#2}
}
\providecommand{\href}[2]{#2}

\end{document}